\DeclareMathOperator{\Gal}{Gal}
\DeclareMathOperator{\rank}{rank}
\DeclareMathOperator{\ord}{ord}
\DeclareMathOperator{\Sel}{Sel}
\DeclareMathOperator{\Frac}{Frac}
\DeclareMathOperator{\GL}{GL}
\newcommand{\cyc}{{\mathrm{cyc}}}
\newtheorem{theorem}{Theorem}[section]
\newtheorem*{theorem*}{Theorem}
\newtheorem{lemma}[theorem]{Lemma}
\newtheorem{proposition}[theorem]{Proposition}
\newtheorem{corollary}[theorem]{Corollary}
\newtheorem{defn}[theorem]{Definition}
\newtheorem{defn-not}[theorem]{Notation and Definition}
\numberwithin{equation}{section}
\newtheorem{lthm}{Theorem} 
\newtheorem{remark}[theorem]{Remark}
\newcommand\EatDot[1]{}
\newcommand{\frakp}{{\mathfrak{p}}}
\newcommand{\lmto}{\mapsto}
\newcommand{\ac}{\mathrm{ac}}
\newcommand{\C}{\mathbb{C}}
\newcommand{\cE}{{\mathcal{E}}}
\newcommand{\cL}{{\mathcal{L}}}
\newcommand{\QQ}{\mathbb{Q}}
\newcommand{\g}{\mathbf{g}}
\newcommand{\ZZ}{\mathbb{Z}}
\newcommand{\Qp}{\mathbb{Q}_p}
\newcommand{\Zp}{\mathbb{Z}_p}
\definecolor{Green}{rgb}{0.0, 0.5, 0.0}
\newcommand{\bZ}{\mathbf{Z}}
\newcommand{\cO}{\mathcal{O}}
\newcommand{\Q}{\mathbb{Q}}
\newcommand{\F}{\mathbb{F}}
\newcommand{\Z}{\mathbb{Z}}
\newcommand{\Char}{\mathrm{Char}}
\newcommand{\is}{i\in\{1,2\}}
\newcommand{\BDP}{\mathrm{BDP}}
\newcommand{\fp}{\mathfrak{p}}
\newcommand{\fa}{\mathfrak{a}}
  \DeclareFontFamily{U}{wncy}{}
  \DeclareFontShape{U}{wncy}{m}{n}{<->wncyr10}{}
  \DeclareSymbolFont{mcy}{U}{wncy}{m}{n}
  \DeclareMathSymbol{\sha}{\mathord}{mcy}{"58}
  \DeclareMathSymbol{\zhe}{\mathord}{mcy}{"11}
\newcommand{\mylabel}[2]{#2\def\@currentlabel{#2}\label{#1}}
\title[Iwasawa invariants of BDP Selmer groups and $p$-adic $L$-functions]{On the Iwasawa invariants of BDP Selmer groups and BDP $p$-adic $L$-functions}
\let\@wraptoccontribs\wraptoccontribs
\author[A.~Lei]{Antonio Lei}
\address[Lei]{Department of Mathematics and Statistics\\University of Ottawa\\
150 Louis-Pasteur Pvt\\
Ottawa, ON\\
Canada K1N 6N5}
\email{antonio.lei@uottawa.ca}
\author[K. Müller]{Katharina Müller}
\address[Müller]{D\'epartement de Math\'ematiques et de Statistique\\
Universit\'e Laval, Pavillion Alexandre-Vachon\\
1045 Avenue de la M\'edecine\\
Qu\'ebec, QC\\
Canada G1V 0A6}
\email{katharina.mueller.1@ulaval.ca}
\author[J. Xia]{Jiacheng Xia}
\address[Xia]{D\'epartement de Math\'ematiques et de Statistique\\
Universit\'e Laval, Pavillion Alexandre-Vachon\\
1045 Avenue de la M\'edecine\\
Qu\'ebec, QC\\
Canada G1V 0A6}
\email{jiacheng.xia.1@ulaval.ca}
\subjclass[2020]{11R23 (primary); 11G40 (secondary)}
\keywords{Anticyclotomic Iwasawa theory, congruences of modular forms,  Heegner points}
\begin{document}
\begin{abstract}
Let $p$ be an odd prime. Let $f_1$ and $f_2$ be weight $2$ Hecke eigen cusp forms with isomorphic residual Galois representations at $p$. Greenberg--Vatsal and Emerton--Pollack--Weston showed that if $p$ is a good ordinary prime for the two forms, the Iwasawa invariants of their $p$-primary Selmer groups and $p$-adic $L$-functions over the cyclotomic $\mathbb{Z}_p$-extension of $\mathbb{Q}$ are closely related. The goal of this article is to generalize these results to the anticyclotomic setting. More precisely, let $K$ be an imaginary quadratic field where $p$ splits. Suppose that the generalized Heegner hypothesis holds with respect to both $(f_1,K)$ and $(f_2,K)$. We study relations between the Iwasawa invariants of the BDP Selmer groups and the BDP $p$-adic $L$-functions of $f_1$ and $f_2$. 
\end{abstract}

\maketitle

\section{Introduction}
\label{S: Intro}

Let $K$ be an imaginary quadratic field with discriminant $d_K$. Throughout this article, $p\ge3$  is a fixed prime number that is split in $K$. We fix once and for all an embedding $\overline{\QQ}\hookrightarrow\overline{\Qp}$ and let $\fp$ be the prime of $K$ above $p$ determined by this embedding given by the pre-image of the maximal ideal of the ring of integers of $\overline{\Qp}$. Let $f$ be a normalized newform (by which we always mean a Hecke eigenform) of $S_2(\Gamma_0(N),\cO)$, where $\cO$ is the ring of integers of a number field $L$.  Let $K_\infty$ be the anticyclotomic $\Zp$-extension of $K$. We assume that the class number of $K$ is coprime to $p$. In particular, this implies that the two primes above $p$ are totally ramified in $K_\infty$.

We assume that $N$ is coprime to $pd_K$ and we factorize $N$ into a product $N^+N^-$, where $N^+$ (resp. $N^-$) is the product of the prime factors of $N$ that are split (resp. inert) in $K$. We say that $f$ satisfies the \textit{generalized Heegner hypothesis} (with respect to $K$) if:
\begin{itemize}
    \item[\mylabel{GHH}{(GHH)}]$N^-$ is the square-free product of an even number of primes. 
\end{itemize}

In \cite{bertolinidarmonprasanna13,miljan}, under the assumption that $N^-=1$, Bertolini--Darmon--Prasanna and Brako\v{c}evi\'{c} constructed a $p$-adic $L$-function $L_p^\BDP(f)\in\Lambda^\ac$ whose square interpolates the $L$-values of $f_{/K}$ twisted by Hecke characters over $K$ of certain infinity type (this element is commonly known as the BDP $p$-adic $L$-function attached to $f$; For the definition of $\Lambda^\ac$, see \S\ref{prelim}). The construction of $L_p^\BDP(f)$ has been generalized by Hunter Brooks \cite{HB15} to the setting $N^->1$. The Iwasawa main conjecture predicts that the Pontryagin dual of the so-called BDP Selmer group $\Sel^\BDP(K_\infty,A_f)$ attached to $f$ over $K_\infty$  is torsion over $\Lambda^\ac$. Furthermore, its characteristic ideal (see Definition~\ref{def:invariants}) satisfies 
\begin{equation}\tag{IMC}\label{IMC}
    \left(L_p^\BDP(f)^2\right)\stackrel{?}{=}\Char_{\Lambda^\ac}\left(\Sel^\BDP(K_\infty,A_f)^\vee\otimes \Lambda^\ac\right)
\end{equation}
as ideals in $\Lambda^\ac$.

The conjecture \eqref{IMC} is equivalent to Perrin-Riou's Heegner point main conjecture formulated in \cite{perrinriou87} when $f$ is $p$-ordinary (see for example \cite[Theorem~5.2]{BCK21}). For similar results in the non-ordinary case, see \cite[Theorem~3.7]{CCSS} and \cite[Theorem~6.8]{castellawan1607}. This equivalence, together with the works of Howard \cite{howard04,howardcompositio1} (in the ordinary case) and the work of Kobayashi--Ota \cite{kobayashiota} combined with \cite{lei-zhao} (in the non-ordinary case), show that the inclusion $\subseteq$ in \eqref{IMC} holds under certain hypotheses. Under different hypotheses, the other  inclusion  or even the full equality of \eqref{IMC} has been studied in \cite[Theorem~5.2]{castellawan}, \cite[Theorem~B]{BCK21}, \cite[Theorem~6.8]{castellawan1607} and \cite[Theorem~5.8]{CCSS}.
In a recent work of Sweeting \cite[Theorem~E]{naomi}, \eqref{IMC} has been shown to hold in the $p$-ordinary case under very mild hypotheses.

The goal of this article is to study how Iwasawa invariants of the objects featured in \eqref{IMC} behave under congruences, generalizing results of Greenberg--Vatsal \cite{greenberg-vatsal} on objects defined over the cyclotomic $\Zp$-extension of $\QQ$ in the $p$-ordinary case (see \cite{kim09,hatleylei2019} for similar results in the $p$-non-ordinary case) and covering both the ordinary and non-ordinary cases simultaneously. Note that analogous results in the anticyclotomic setting have  been studied in \cite{kim17,CKL} under the hypothesis that $N^-$ is a squarefree product of an odd number of primes. In particular, \ref{GHH} is violated and the results therein concern different Selmer groups and $p$-adic $L$-functions from those featured in \eqref{IMC}. Under mild technical hypothesis one implication of the correpsonding main conjecture is proved in \cite{BertoliniDarmon2005}.

In order to explain our results, we first introduce certain notation that will be utilized throughout the  article.
For $\is$, let  $f_i\in S_2(\Gamma_0(N_i),\cO)$ be a newform. Here, $\cO$ is the ring of integers of a  number field $L$. We factor $N_i=N_i^+N_i^-$ as above and assume that \ref{GHH} holds for $f_i$.

Suppose that $p\nmid N_1N_2$ and fix a place $v$ of $\cO$ above $p$. We denote the completion of $\cO$ at $v$ by $\cO_v$ and fix a uniformizer $\varpi$ of $\cO_v$. Let 
\[\rho_i\colon G_\Q\to \textup{Aut}(V_i)=\textup{GL}_2(L_v)\] be the $p$-adic representation attached to $f_i$ of \cite{deligne69}, such that the identity $\textup{Tr}(\rho_i(\textup{Frob}_l))=a_l(f_i)$ holds for every prime $l$ coprime to $N_i p$, where $\textup{Frob}_l$ is the arithmetic Frobenius at $l$. Note that the Hodge--Tate weights of $V_i$ are $0$ and $1$ (our convention is that the cyclotomic character has Hodge--Tate weight $1$). The $p$-adic Tate module of the abelian variety attached to $f_i$ determines a $G_\Q$-stable lattice $T_i\subset V_i$. We define $A_i=V_i/T_i$. By an abuse of notation, we  denote the representation
\[G_\Q \to \textup{Aut}(T_i)=\textup{GL}_2(\cO_v)\]
by the same symbol $\rho_i$. The residual representation of $\rho_i$ is denoted by $\bar\rho_i$.
We assume throughout that 
\begin{equation}
\bar\rho_1\simeq\bar\rho_2
    \label{cong}\tag{Cong}
\end{equation}
as $G_\Q$-representations.

For $\is$, we consider the following conditions:
\begin{itemize}
    \item[\mylabel{FG}{(FG)}]  $\Sel^\BDP(K_\infty,A_i)^\vee$ is a finitely generated $\cO_v$-module.
    \item[\mylabel{H0}{($H^0$)}] $H^0(K_w,A_i)=\{0\}$ for all $w\mid p N^-_1N^-_2$ and $\is$.
\end{itemize}

Our main results are the following theorems.

\begin{lthm}[{Corollary~\ref{cor-equality-of-invariants}}]\label{thmA}
    Suppose that hypotheses \ref{H0} holds for both $f_1$ and $f_2$. Then \ref{FG}  holds for $i=1$ if and only if it holds for $i=2$. Furthermore, when these equivalent conditions hold, we have 
\[\rank_{\cO_v}(\Sel^{\BDP}(K_\infty,A_1)^\vee)+\sum_{\ell\in \Sigma_0}c_{\ell,1}=\rank_{\cO_v}(\Sel^{\BDP}(K_\infty,A_2)^\vee)+\sum_{\ell\in \Sigma_0}c_{\ell,2},\]
where $\Sigma_0$ is the set of primes dividing $N^+_1N^+_2$ and $c_{\ell,i}$ are explicit local terms defined in Lemma~\ref{lemma:lambda-invariants}.\end{lthm}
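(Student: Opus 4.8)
The plan is to compare the two BDP Selmer groups via their common residual representation, following the Greenberg--Vatsal strategy adapted to the anticyclotomic tower. First I would set up non-primitive (or $\Sigma_0$-imprimitive) Selmer groups: for $\is$, define $\Sel^{\BDP}_{\Sigma_0}(K_\infty,A_i)$ by relaxing the local conditions at the primes dividing $N_1^+N_2^+$, so that the residual local conditions at $p$ (and at the inert primes dividing $N_1^-N_2^-$) are the only ones that genuinely constrain the module. The exact sequence relating $\Sel^{\BDP}$ to $\Sel^{\BDP}_{\Sigma_0}$ has as its cokernel a sum of local cohomology groups $H^1(K_\infty\otimes K_\ell, A_i)$ over $\ell\in\Sigma_0$; under \ref{H0} these are cofinitely generated over $\cO_v$ with corank equal to the local term $c_{\ell,i}$ appearing in Lemma~\ref{lemma:lambda-invariants}. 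This already shows that \ref{FG} for $\Sel^{\BDP}(K_\infty,A_i)$ is equivalent to \ref{FG} for $\Sel^{\BDP}_{\Sigma_0}(K_\infty,A_i)$, and that the difference of $\cO_v$-ranks is exactly $\sum_{\ell\in\Sigma_0}c_{\ell,i}$. So it suffices to prove that $\Sel^{\BDP}_{\Sigma_0}(K_\infty,A_1)^\vee$ is finitely generated over $\cO_v$ if and only if $\Sel^{\BDP}_{\Sigma_0}(K_\infty,A_2)^\vee$ is, and that in that case the two have the same $\cO_v$-rank.

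The bridge between $i=1$ and $i=2$ is the residual Selmer group $\Sel^{\BDP}_{\Sigma_0}(K_\infty,A_i[\varpi])=\Sel^{\BDP}_{\Sigma_0}(K_\infty,\bar\rho_i)$, which by \ref{cong} is \emph{the same} object for both forms (one must check that the local conditions at $p$ and at the bad inert primes, once reduced mod $\varpi$, depend only on $\bar\rho_i$ and the chosen imprimitive set — this is where the precise shape of the BDP local condition, as recalled in the preliminaries, enters, and where \ref{H0} is used to kill the relevant $H^0$'s so that reduction mod $\varpi$ is exact on local conditions). Then for each $i$ I would run the standard control-type argument: the multiplication-by-$\varpi$ sequence $0\to A_i[\varpi]\to A_i\xrightarrow{\varpi} A_i\to 0$ gives
\[
0\to \Sel^{\BDP}_{\Sigma_0}(K_\infty,A_i)[\varpi]\to \Sel^{\BDP}_{\Sigma_0}(K_\infty,A_i[\varpi])\to \Sel^{\BDP}_{\Sigma_0}(K_\infty,A_i)/\varpi,
\]
with the relevant error terms controlled (again) by \ref{H0}. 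Since $\Lambda^\ac$ is a regular local ring of dimension $2$, a compact $\Lambda^\ac$-module $X$ has $X^\vee$ finitely generated over $\cO_v$ iff $X[\varpi]^\vee$ is finite, and in that case $\rank_{\cO_v}X^\vee=\mathrm{corank}_{\cO_v}X[\varpi]$ differs from $\dim_{\cO_v/\varpi}X[\varpi]^\vee$... more precisely I would use that $\Sel^{\BDP}_{\Sigma_0}(K_\infty,A_i[\varpi])$ being finite forces $\Sel^{\BDP}_{\Sigma_0}(K_\infty,A_i)^\vee$ to be finitely generated over $\cO_v$, with $\cO_v$-rank equal to a quantity read off from the residual Selmer group independently of $i$. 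Concretely, combining the $\varpi$-sequence with the analogous sequence for the dual (or with a global Euler characteristic / Poitou--Tate computation), the $\cO_v$-corank of $\Sel^{\BDP}_{\Sigma_0}(K_\infty,A_i)$ equals $\dim_{\cO_v/\varpi}\Sel^{\BDP}_{\Sigma_0}(K_\infty,\bar\rho)$ minus a correction coming from the mod-$\varpi$ reductions of the local conditions, and every term on the right is the same for $i=1$ and $i=2$ by \ref{cong}. Equating the two expressions and adding back $\sum_{\ell\in\Sigma_0}c_{\ell,i}$ gives the displayed identity.

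The main obstacle, I expect, is not the algebra of control sequences but checking that the \emph{BDP local condition is ``residually compatible''} — i.e.\ that after reducing mod $\varpi$, the image of the BDP subspace of $H^1(K_{\infty,\fp},A_i)$ inside $H^1(K_{\infty,\fp},\bar\rho_i)$ depends only on $\bar\rho_i$ and not on the lift $\rho_i$, and similarly that the local condition at inert primes dividing $N_i^-$ behaves well. In the ordinary case this is the usual statement that the Greenberg-type filtration reduces mod $\varpi$; in the supersingular/non-ordinary case the BDP condition is cut out by a ``big image of a Coleman-type map'' and one needs the mod-$\varpi$ reduction of that map to be controlled — this is exactly the kind of input that I would expect the paper to establish in its preliminary sections (and it is the reason the theorem treats ordinary and non-ordinary uniformly). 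A secondary technical point is ensuring surjectivity of the relevant global-to-local maps (a weak Leopoldt / vanishing of $H^2$ statement over $K_\infty$) so that the imprimitive Selmer group really is ``co-free up to the $c_{\ell,i}$'', which again should follow from standard facts about the anticyclotomic $\Zp$-extension together with \ref{H0}.
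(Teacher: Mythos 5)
Your overall architecture is the paper's: pass to the $\Sigma_0$-imprimitive Selmer group, use that its $\varpi$-torsion depends only on the residual representation, and account for the local terms at $\Sigma_0$ separately. But there is a genuine gap in the step where you convert ``same $\varpi$-torsion'' into ``same $\cO_v$-rank''. For a cofinitely generated $\Lambda^\ac_{\cO_v}$-module $S=\Sel^{\BDP}_{\Sigma_0}(K_\infty,A_i)$ with $X=S^\vee$ finitely generated over $\cO_v$, one has
\[
\rank_{\cO_v}(X)\;=\;\dim_{\F}\bigl(S[\varpi]\bigr)\;-\;\dim_{\F}\bigl(X[\varpi]\bigr),
\]
so the discrepancy is the finite $\cO_v$-torsion submodule of the \emph{dual} Selmer group. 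This is a global invariant of $A_i$, not a ``correction coming from the mod-$\varpi$ reductions of the local conditions'', and nothing in \eqref{cong} forces it to agree for $i=1$ and $i=2$. The paper closes this gap by proving that $\Sel^{\BDP}_{\Sigma_0}(K_\infty,A_i)^\vee$ has \emph{no} nontrivial finite $\Lambda^\ac_{\cO_v}$-submodule (Lemma~\ref{no-finite-submodule}), so that $\rank_{\cO_v}(X)=\ord_\varpi(|S[\varpi]|)$ exactly; the proof of that lemma is not formal — it combines the surjectivity of the global-to-local map (Lemma~\ref{lemma:surj}, itself a Greenberg-style argument with twists $B_t=A_i\otimes\kappa^t$ and a uniform bound on cokernels coming from \ref{H0}), the vanishing of $H^2(K_\Sigma/K_\infty,B_t)$, and a computation of $\Gamma^\ac$-coinvariants. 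Your parenthetical appeal to ``the analogous sequence for the dual or a Poitou--Tate computation'' points in the right direction but does not substitute for this argument.

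Separately, the difficulty you single out as the ``main obstacle'' — residual compatibility of the BDP local condition at $\fp$, via a Greenberg filtration in the ordinary case or a Coleman-map image in the non-ordinary case — does not arise. In this paper the BDP local condition is simply \emph{strict} at every place of $\Sigma\setminus(\Sigma_0\cup\{\fp^c\})$ (including $\fp$) and \emph{relaxed} at $\fp^c$; no filtration or Coleman map enters, which is precisely why the ordinary and non-ordinary cases are treated uniformly. The only input needed for the control of $\varpi$-torsion is that \ref{H0} kills $H^0(K_{\infty,w},A_i)$, so that $H^1(K_{\infty,w},A_i[\varpi])\cong H^1(K_{\infty,w},A_i)[\varpi]$ (Lemma~\ref{lemma:isom}). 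One last small correction: the cotorsionness of the local terms $H^1(K_{\infty,w},A_i)$ for $w\mid\Sigma_0$, with characteristic ideal $(\mathcal{P}^{f_i}_w)$ and trivial $\mu$-invariant, comes from the finite decomposition of these (split) primes in $K_\infty/K$, not from \ref{H0}.
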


This generalizes similar results in \cite{HL2,HL21} where $N_1^-$ and $N_2^-$ are assumed to be $1$. Under appropriate hypotheses, \cite[Theorem~A]{lei-zhao} and \cite[Theorem~5.1]{HL21} show that \ref{FG} holds. Theorem \ref{thmA} allows us to illustrate new instances where \ref{FG} holds that are not covered by the aforementioned results (see Lemma~\ref{lem:example}).

On the analytic results, we prove a similar result under different hypotheses. In particular, we consider for $\is$:

\begin{itemize}
    \item[\mylabel{mu}{(mu)}]  $L_p^\BDP(f_i)$ is an Iwasawa function that is not divisible by $\varpi$.
    \item[\mylabel{div}{(div)}] If $N_i^-\neq 1$, then $p\nmid\prod_{q\mid N_i}(q-1)q(q+1)$.
    \item[\mylabel{sq-free}{(sq-fr)}] If $N_i^-\neq 1$, then $N_i$ is square-free.
\end{itemize}
\begin{remark}
    Assume that $\overline{\rho}$ is irreducible and let $M$ be the conductor of $\overline{\rho}$. We can write $N=M\prod_{i=1}^sq_i^{e_i}$ for some primes $q_i$ and exponents $e_i$. If \ref{div} is satisfied, then \cite[Theorem 1]{Diamond-Taylor} implies that $(q_i,M)=1$ and $e_i=1$ for $1\le i\le s$. Thus, \ref{sq-free} is satisfied if $M$ is square-free.
\end{remark}
\begin{lthm}[{Theorem~\ref{main-theorem-analytic}}]\label{thmB}
    Suppose  that the hypotheses  \ref{div}, \ref{sq-free} and \eqref{unit}\footnote{The hypothesis \eqref{unit} concerns the $p$-divisibility of certain  explicit factors. We refer the reader to the statement of Theorem~\ref{BDP-l-function-for-eigenforms} in the main body of the article for further details.} hold for both $f_1$ and $f_2$. Let  $\alpha(f_i)$ be the factor introduced in Theorem~\ref{thm:BDP}.\footnote{The constant $\alpha(f_i)$ is a quotient of Petersson products when $N_i^->1$. If $N_i^-=1$, then $\alpha(f_i)=1$.}
    Then, $\alpha(f_i)$ is a $p$-adic unit and \ref{mu}  holds for $i=1$ if and only if the same is true for $i=2$. Furthermore, when these equivalent conditions hold, the $\lambda$-invariants (see Definition~\ref{def:invariants}) of the BDP $p$-adic $L$-functions of $f_1$ and $f_2$ satisfy
    \[
    2\lambda(L_p^\BDP(f_1))+\sum_{q|\tilde N_3^{(1)}}d_{q,1}=2\lambda(L_p^\BDP(f_2))+\sum_{q|\tilde N_3^{(2)}}d_{q,2},
    \]
    where $\tilde N_3^{(1)}$, $\tilde N_3^{(2)}$, $d_{q,1}$ and $d_{q,2}$ are explicitly defined in Theorem~\ref{BDP-l-function-for-eigenforms}.
\end{lthm}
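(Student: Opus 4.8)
The plan is to compare both BDP $p$-adic $L$-functions $L_p^\BDP(f_i)$ to a common ``mod $\varpi$'' object built out of the shared residual representation $\overline{\rho}$, exactly in the spirit of Greenberg--Vatsal but in the anticyclotomic / BDP setting. First I would invoke Theorem~\ref{BDP-l-function-for-eigenforms} (whose hypotheses are precisely \ref{div}, \ref{sq-free} and \eqref{unit}, and which incorporates the factor $\alpha(f_i)$ from Theorem~\ref{thm:BDP}): this expresses $L_p^\BDP(f_i)$, up to the explicit local/interpolation factors recorded there and up to the unit $\alpha(f_i)$, in terms of the special values that occur in the Waldspurger/Gross-style formula. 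The point of reducing to the case $N_i^- = 1$ via the Jacquet--Langlands transfer built into $\alpha(f_i)$ is that on the definite quaternion side the $p$-adic $L$-function is manufactured directly from a $\varpi$-integral eigenform on a finite set, whose reduction mod $\varpi$ depends only on $\overline{\rho}$ (together with the tame level data, which forces the appearance of the local Euler-type factors indexed by $q \mid \tilde N_3^{(i)}$). The first substantive step is therefore: show $\alpha(f_i)$ is a $p$-adic unit. This follows from \eqref{unit} together with the control on the level provided by \ref{div} and \ref{sq-free}, using the remark after \ref{sq-free} (via \cite[Theorem~1]{Diamond-Taylor}) to pin down that the conductor of $\overline{\rho}$ and the ``extra'' primes interact as needed; the Petersson-product quotient defining $\alpha(f_i)$ is then a ratio of congruence numbers which \eqref{unit} guarantees is prime to $\varpi$.

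Next I would set up the precise dictionary. Writing $L_p^\BDP(f_i) = u_i \cdot \big(\prod_{q \mid \tilde N_3^{(i)}} E_q(f_i)\big) \cdot \mathcal{L}_i$ where $u_i$ is a unit, $E_q(f_i)$ are the explicit local factors from Theorem~\ref{BDP-l-function-for-eigenforms} contributing the $\lambda$-terms $d_{q,i}$, and $\mathcal{L}_i \in \Lambda^\ac$ is the ``core'' $\theta$-element attached to the quaternionic eigenform $\phi_i$ over $\cO_v$. The congruence \eqref{cong}, $\overline{\rho}_1 \simeq \overline{\rho}_2$, together with multiplicity one on the definite quaternion algebra (or Mazur's principle / Ihara-type arguments controlling the Hecke module mod $\varpi$ — here \ref{div} and \ref{sq-free} are what let us identify the relevant Hecke eigenspaces), yields $\phi_1 \equiv \phi_2 \pmod{\varpi}$ up to a unit after normalizing, hence $\mathcal{L}_1 \equiv \mathcal{L}_2 \pmod{\varpi}$ in $\Lambda^\ac$. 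From this congruence of core elements one reads off immediately: (i) $\mathcal{L}_1$ is a nonzero Iwasawa function not divisible by $\varpi$ iff the same holds for $\mathcal{L}_2$; (ii) when they are, $\lambda(\mathcal{L}_1) = \lambda(\mathcal{L}_2)$, since the $\lambda$-invariant of a $\varpi$-adic unit times an Iwasawa function is the degree of its mod-$\varpi$ reduction, which is the same for both.

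Then I would assemble the statement. Hypothesis \ref{mu} for $f_i$ says $L_p^\BDP(f_i)$ is an Iwasawa function not divisible by $\varpi$; since $u_i$ is a unit and each local factor $E_q(f_i)$ is a nonzero Iwasawa function (this is part of what Theorem~\ref{BDP-l-function-for-eigenforms} records), \ref{mu} for $f_i$ is equivalent to: $\mathcal{L}_i$ is an Iwasawa function not divisible by $\varpi$, \emph{and} none of the $E_q(f_i)$ is divisible by $\varpi$. The second clause is automatic under \ref{div} (this is exactly why \ref{div} forbids $p \mid (q-1)q(q+1)$: those are the possible $\varpi$-valuations of the local factors $E_q$), so \ref{mu} for $f_i$ $\iff$ $\varpi \nmid \mathcal{L}_i$, and by step two this is independent of $i$. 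Finally, under these equivalent conditions,
\[
\lambda\big(L_p^\BDP(f_i)\big) \;=\; \lambda(\mathcal{L}_i) \;+\; \sum_{q \mid \tilde N_3^{(i)}} \lambda\big(E_q(f_i)\big) \;=\; \lambda(\mathcal{L}_i) \;+\; \tfrac12\sum_{q \mid \tilde N_3^{(i)}} d_{q,i},
\]
where the last equality is the definition of $d_{q,i}$ in Theorem~\ref{BDP-l-function-for-eigenforms} (the factor $2$ is absorbed because the interpolation property involves $L_p^\BDP(f_i)^2$); multiplying by $2$ and using $\lambda(\mathcal{L}_1) = \lambda(\mathcal{L}_2)$ gives the asserted identity
\[
2\lambda\big(L_p^\BDP(f_1)\big) + \sum_{q \mid \tilde N_3^{(1)}} d_{q,1} \;=\; 2\lambda\big(L_p^\BDP(f_2)\big) + \sum_{q \mid \tilde N_3^{(2)}} d_{q,2}.
\]

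The main obstacle I anticipate is not the Iwasawa-theoretic bookkeeping but the input from Theorem~\ref{BDP-l-function-for-eigenforms}: reconciling the BDP $p$-adic $L$-function, which for $N_i^- = 1$ is defined analytically (via $p$-adic interpolation of Katz-style Eisenstein/theta values à la Bertolini--Darmon--Prasanna and Brako\v{c}evi\'{c}, \cite{bertolinidarmonprasanna13,miljan}, and for $N_i^->1$ via Hunter Brooks \cite{HB15}), with an explicit \emph{$\varpi$-integral} quaternionic $\theta$-element whose reduction is controlled by $\overline{\rho}$ alone — tracking the precise local factors $E_q$ and the unit $\alpha(f_i)$ through that comparison, and ensuring the $\varpi$-integrality needed for ``reduce mod $\varpi$'' to make sense, is the technical heart. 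This is exactly the content we are permitted to cite from Theorem~\ref{BDP-l-function-for-eigenforms}, so in the present proof it is used as a black box; the remaining work — the congruence $\phi_1 \equiv \phi_2$, the behavior of $\lambda$ and $\mu$ under such congruences, and the role of \ref{div}/\ref{sq-free} — is then routine.
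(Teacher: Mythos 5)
There is a genuine gap at the heart of your argument, and it is precisely the one the paper is designed to avoid. Your key step is the congruence $\phi_1\equiv\phi_2\pmod{\varpi}$ of the quaternionic eigenforms underlying $L_p^\BDP(f_i)$, which you claim follows from \eqref{cong} plus ``multiplicity one on the definite quaternion algebra (or Mazur's principle / Ihara-type arguments).'' First, under \ref{GHH} the quaternion algebra is \emph{indefinite} ($N_i^-$ has an even number of prime factors), so the construction lives on a Shimura curve \`a la Hunter Brooks, not on a finite set attached to a definite algebra; there is no elementary ``$\varpi$-integral function on a finite set'' to reduce mod $\varpi$. Second, and more importantly, the congruence of the Jacquet--Langlands transfers in this generality is, as the authors state explicitly in the introduction, an open problem (it is known only under Emerton's hypotheses); this is exactly why they do \emph{not} argue on the quaternionic side. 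Instead, the paper transports everything to $\GL_2$: by Theorem~\ref{thm:integrality-initial-case}, $\alpha(f_i)L_p^\BDP(f_i)^2$ equals, up to an explicit unit, the anticyclotomic specialization $\mathcal{L}^{\ac}_{p,N_i}(f_i)L^{\ac}_{K,p}$ of a Rankin--Selberg $p$-adic $L$-function times the Katz $p$-adic $L$-function. The functional $\mathcal{L}_{p,N}$ is $\cO_v$-linear in the cusp form, so the elementary Fourier-coefficient congruence $\check f_1-\check f_2=\varpi h$ (which follows from \eqref{cong} together with Wiles' result at $p$ in the ordinary case) yields $(\mathcal{L}^{\ac}_{p,N}(\check f_1)-\mathcal{L}^{\ac}_{p,N}(\check f_2))L^{\ac}_{K,p}=\varpi\,\mathcal{L}^{\ac}_{p,N}(h)L^{\ac}_{K,p}$. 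The hard point is then the \emph{integrality} of $\mathcal{L}^{\ac}_{p,N}(h)L^{\ac}_{K,p}$ for the non-eigenform $h$, which the paper establishes via the Hida--Tilouine congruence ideal of the CM family and Rubin's main conjecture (Proposition~\ref{integrality-congruence-ideal} through Corollary~\ref{cor:integrality}). None of this machinery appears in your proposal, and without it the ``reduce mod $\varpi$'' step has no justification.

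Two smaller corrections. The unit-ness of $\alpha(f_i)$ is not something you prove unconditionally from \eqref{unit}: the hypotheses \ref{div} plus \cite[Theorem~2.4]{prasanna2006} only give that $\alpha(f_i)$ is a $p$-adic \emph{integer}, and the statement that it is a unit is part of the equivalent conditions in the biconditional (it falls out of the $\mu=0$ equivalence because $\alpha(f_i)$ multiplies $L_p^\BDP(f_i)^2$ in the comparison identity). And the local terms $d_{q,i}$ arise from Theorem~\ref{BDP-l-function-for-eigenforms} as the $\lambda$-invariants of the Euler-type factors $\mathcal{P}^{f_i}_{\mathfrak q}$ at split primes $\mathfrak q\mid\tilde N_3^{(i)}$ introduced when passing from $f_i$ to the level-raised $\check f_i$; the correct bookkeeping is $\lambda(\mathcal{L}^{\ac}_{p,N}(\check f_i)L^{\ac}_{K,p})=2\lambda(L_p^\BDP(f_i))+\sum_{\mathfrak q\mid\tilde N_3^{(i)}}\lambda(\mathcal{P}^{f_i}_{\mathfrak q})$, from which the asserted identity follows once the two left-hand sides are shown equal.
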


Note that under certain hypotheses, Hsieh \cite[Theorem~B]{hsiehnonvanishing} and Burungale \cite[Theorem~B]{burungale2} have shown that \ref{mu} holds for a large family of modular forms.  Analogous to the algebraic side, Theorem~\ref{thmB} allows us to find new examples of modular forms for which \ref{mu} holds (see Lemma~\ref{lem:example2}). The residual representations of these modular forms at $p$ are not absolutely irreducible. In particular, they are not covered by the previous results of Hsieh and Burungale.

Our proof of Theorem~\ref{thmB} allows us to deduce that, up to certain explicit Euler factors, the logarithms of the Heegner points on two abelian varieties of $\GL_2$-type satisfying \eqref{cong} are congruent modulo $\varpi$. In particular, this allows us to generalize \cite[Theorem~1.16]{KL} (with $m=1$), relaxing the strong Heegner hypothesis to \ref{GHH}. See Theorem~\ref{thm:KL} for the precise statement.

Our proof of Theorem~\ref{thmB} relies on a comparison between $L_p^\BDP(f_i)$  and the anticyclotomic specialization of the 2-variable Rankin--Selberg $p$-adic $L$-function multiplied by Katz' $p$-adic $L$-function. This is the origin of the factor $\alpha(f_1)$. Such comparison was already explored extensively in several previous works including \cite{CR,castella13,castellawan1607}. The crux of the method we employ dwells on congruence properties enjoyed by  Rankin--Selberg $p$-adic $L$-functions. As far as the authors are aware, such congruence properties are currently not proven directly for the BDP $p$-adic $L$-functions, which are constructed using quaternionic modular forms via the Jacquet--Langlands correspondence. Under certain hypotheses, results of Emerton \cite{emerton02} imply that these quaternionic modular forms are  congruent when \eqref{cong} holds. However, it seems to be an open problem in the generality we currently work with  (we refer the reader to the discussion in \cite[P.1780]{martin17} for more details). 

Finally, when the hypotheses of Theorems~\ref{thmA} and \ref{thmB} hold simultaneously, we can compare the local terms  to deduce:
\begin{lthm}[{Theorem~\ref{main}}]\label{thmC}
Let $f_1$ and $f_2$ be newforms of level $N_1$ and $N_2$ respectively, satisfying \ref{GHH}, \eqref{cong}, \ref{H0}, \ref{div}, \ref{sq-free} and \eqref{unit}. Assume that \ref{mu} holds for $f_1$ and that $\alpha(f_1)$ is a $p$-adic unit. Suppose that one inclusion in \eqref{IMC} holds for $f_2$ and that the full equality of \eqref{IMC}  holds for $f_1$. Then the full equality of \eqref{IMC} holds for $f_2$.

\end{lthm}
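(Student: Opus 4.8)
The plan is to deduce Theorem~\ref{thmC} by combining the numerical comparisons furnished by Theorems~\ref{thmA} and \ref{thmB} with the known divisibility (one inclusion) in \eqref{IMC} for $f_2$. Recall that \eqref{IMC} is an equality of ideals in the domain $\Lambda^\ac$; since $\Lambda^\ac$ is a regular local ring of Krull dimension $2$, an inclusion of ideals generated by single elements is an equality if and only if the two generators have the same $\mu$- and $\lambda$-invariants (equivalently, the same image in $\Lambda^\ac\otimes\Q_p$ up to a unit and the same $\varpi$-adic valuation). So the strategy is: assuming one inclusion of \eqref{IMC} for $f_2$, show that the $\mu$-invariants agree on both sides and that the $\lambda$-invariants agree on both sides; conclude equality. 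Both the $\mu$- and $\lambda$-comparison for $f_2$ are obtained by transporting, via congruences, the corresponding equalities known for $f_1$ (where full \eqref{IMC} holds by hypothesis).

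First I would handle the $\mu$-invariants. By hypothesis \ref{mu} holds for $f_1$, i.e. $\mu\big(L_p^\BDP(f_1)\big)=0$, and by Theorem~\ref{thmB} this forces \ref{mu} for $f_2$, so $\mu\big(L_p^\BDP(f_2)^2\big)=0$; in particular the analytic side of \eqref{IMC} for $f_2$ has trivial $\mu$-invariant. Since one inclusion of \eqref{IMC} holds for $f_2$, the algebraic side then also has trivial $\mu$-invariant (a characteristic ideal dividing, or divided by, a $\mu=0$ element must itself have $\mu=0$), which in particular gives \ref{FG} for $f_2$ (finite generation over $\cO_v$ of the dual Selmer group is equivalent to $\mu=0$ for its characteristic ideal). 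In fact, to invoke Theorem~\ref{thmA} we need \ref{H0} --- which is assumed --- and then \ref{FG} for $f_1$ (which follows from full \eqref{IMC} for $f_1$ together with \ref{mu} for $f_1$, as just argued on the analytic side). Thus all hypotheses of Theorem~\ref{thmA} are in force.

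Next I would compare $\lambda$-invariants. For $f_1$, full \eqref{IMC} gives
\[
2\lambda\big(L_p^\BDP(f_1)\big)=\lambda\Big(\Char_{\Lambda^\ac}\big(\Sel^\BDP(K_\infty,A_1)^\vee\otimes\Lambda^\ac\big)\Big)=\rank_{\cO_v}\big(\Sel^\BDP(K_\infty,A_1)^\vee\big),
\]
the last equality because, with $\mu=0$, the $\lambda$-invariant of the characteristic ideal equals the $\cO_v$-rank of the dual Selmer group. Now I would feed this into the two displayed identities: Theorem~\ref{thmA} relates $\rank_{\cO_v}\big(\Sel^\BDP(K_\infty,A_1)^\vee\big)$ and $\rank_{\cO_v}\big(\Sel^\BDP(K_\infty,A_2)^\vee\big)$ up to the local terms $\sum_{\ell\in\Sigma_0}c_{\ell,i}$, while Theorem~\ref{thmB} relates $2\lambda\big(L_p^\BDP(f_1)\big)$ and $2\lambda\big(L_p^\BDP(f_2)\big)$ up to the local terms $\sum_{q\mid\tilde N_3^{(i)}}d_{q,i}$. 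The crux is then a purely local identity asserting that the two families of correction terms match: for each $i$,
\[
\sum_{\ell\in\Sigma_0}c_{\ell,i}=\sum_{q\mid\tilde N_3^{(i)}}d_{q,i},
\]
i.e. that the Euler-factor contributions to the algebraic side and to the analytic side are literally the same local quantities (this is precisely the kind of term-by-term comparison of Euler factors at primes dividing $N_1^+N_2^+$ that the remark following \ref{sq-free} and the explicit definitions in Lemmas~\ref{lemma:lambda-invariants}, \ref{lemma:lambda-invariants} and Theorem~\ref{BDP-l-function-for-eigenforms} are set up to make). Granting this, subtracting the $f_1$- and $f_2$-versions of the two displayed formulas, and using the $f_1$-identity $2\lambda\big(L_p^\BDP(f_1)\big)=\rank_{\cO_v}\big(\Sel^\BDP(K_\infty,A_1)^\vee\big)$, yields
\[
2\lambda\big(L_p^\BDP(f_2)\big)=\rank_{\cO_v}\big(\Sel^\BDP(K_\infty,A_2)^\vee\big)=\lambda\Big(\Char_{\Lambda^\ac}\big(\Sel^\BDP(K_\infty,A_2)^\vee\otimes\Lambda^\ac\big)\Big).
\]
Combined with the equality of $\mu$-invariants from the previous paragraph, this shows the two sides of \eqref{IMC} for $f_2$ have the same Iwasawa invariants; since one divides the other in $\Lambda^\ac$, they are equal, which is the assertion of Theorem~\ref{thmC}.

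The main obstacle I anticipate is the local matching $\sum_{\ell\in\Sigma_0}c_{\ell,i}=\sum_{q\mid\tilde N_3^{(i)}}d_{q,i}$: one must check that the set of bad primes is the same on both sides (that $\Sigma_0$, the primes dividing $N_1^+N_2^+$, coincides with the primes dividing $\tilde N_3^{(i)}$ up to primes where the local terms vanish) and that the individual local invariants $c_{\ell,i}$ coming from local Galois cohomology of $A_i$ agree with the $d_{q,i}$ coming from the interpolation/Euler factors of $L_p^\BDP(f_i)$. This is where hypotheses \ref{div} and \ref{sq-free} enter, ensuring the ramification at bad primes is tame and the level is square-free so that the relevant local terms are given by the clean formulas used in Lemmas~\ref{lemma:lambda-invariants} and Theorem~\ref{BDP-l-function-for-eigenforms}; a careful case analysis (split vs. inert bad primes, principal-series vs. Steinberg local components) closes the gap. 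The rest of the argument is a formal manipulation of Iwasawa invariants in the two-dimensional regular local ring $\Lambda^\ac$.
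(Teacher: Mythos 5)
Your proposal is correct and follows essentially the same route as the paper's proof: transport the vanishing of $\mu$ and the $\lambda$-identities from $f_1$ to $f_2$ via Theorems~\ref{thmA} and \ref{thmB}, then upgrade the assumed one-sided divisibility for $f_2$ to an equality by Weierstrass preparation. The ``main obstacle'' you flag, namely the matching $\sum_{\ell\in\Sigma_0}c_{\ell,i}=\sum_{q\mid\tilde N_3^{(i)}}d_{q,i}$, is in fact immediate from the paper's definitions rather than requiring a local case analysis: both families of correction terms are $\lambda(\mathcal{P}^{f_i}_{\mathfrak q})$ for the very same Euler factors of Lemma~\ref{lemma:lambda-invariants}, and the two index sets coincide up to primes at which $\mathcal{P}^{f_i}_{\mathfrak q}$ is a unit (those with vanishing Fourier coefficient $b^{(i)}_q$), which contribute nothing to either sum.
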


\section*{Acknowledgement}
Parts of these works were carried out during AL's visit at University College Dublin in fall 2022 supported by a Distinguished Visiting Professorship and the Seed Funding Scheme. He thanks UCD for the financial support and the warm hospitality. He also thanks Kazim Buyukboduk and Daniele Casazza for interesting discussions on subjects related to topics studied in this paper during his visit. The authors would like to thank Tobias Berger, Francesc Castella, Antonio Cauchi, Daniel Delbourgo, Neil Dummigan, Jeffrey Hatley, Ernest Hunter Brooks, David Loeffler, Kimball Martin, Ariel Pacetti and Jan Vonk for answering their questions during the preparation of the article. The authors would also like to thank Chan-Ho Kim, Chao Li and Luochen Zhao for their helpful suggestions and comments. Finally, we thank the anonymous referee for carefully reading an earlier version of the article as well as constructive feedback. All three authors’ research is supported by the NSERC Discovery Grants Program RGPIN-2020-04259 and RGPAS-2020-00096.

\section{Notation}
\label{prelim}

Let $\mathfrak{c}$ be an integral ideal of $K$. We denote the ray class field of conductor $\mathfrak{c}$ by $K_{\mathfrak{c}}$. Let $\mathcal{K}=\bigcup_{n\ge1} K_{p^n}$ and denote $\Gal(\mathcal{K}/K)$ by $H_{p^\infty}$. Analogously, we define $H_{\mathfrak{p}^\infty}$ as the Galois group of $\bigcup_{n\ge1}K_{\mathfrak{p}^n}/K$. Note that $H_{p^\infty}\cong \Z_p^2\times \Delta$ for a finite abelian group $\Delta$. Our assumption that $p$ is coprime to the class number of $K$ implies that $\Delta$ is of order coprime to $p$. 

The group $\Gal(K/\Q)$ acts on the abelian group $H_{p^\infty}$ via the formula $\sigma \mapsto\tilde c\sigma \tilde c^{-1}$, where $\tilde c$ is a lift of $c \in \Gal(K/\Q)$ to $\Gal(\mathcal{K}/\QQ)$. We will frequently use the notation $\sigma^c$ to denote $\tilde c\sigma \tilde c^{-1}$. Let $H^+_{p^\infty}$ be the subgroup on which $\Gal(K/\Q)$ acts trivially and define $H^-_{p^\infty}=H_{p^\infty}/H^+_{p^\infty}$. Let $K^\textup{a}_\infty\subset \mathcal{K}$ be the unique subextension with Galois group $H^-_{p^{\infty}}$ (note that $K^{\textup{a}}_\infty$ contains the anticyclotomic $\Z_p$-extension $K_\infty$ of $K$). We denote $\Gal(K_\infty/K)$ by $\Gamma^\ac$. 

Let $L_v$ and $\cO_v$ be defined as in the introduction.
Let $R$ be the completion of the integral closure of $\cO_v$ in the maximal unramified extension of $L_v$. 

We define the following Iwasawa algebras
\begin{align*}
&  \Lambda_p=R[[H_{p^\infty}]],\quad\Lambda_{\mathfrak{p}}=R[[H_{\mathfrak{p}^\infty}]],\quad\Lambda^\textup{a}=R[[H^-_{p^\infty}]],\\
 &\Lambda^\textup{ac}=R[[\Gamma^{\ac}]],\quad\Lambda^\ac_{\cO_v}=\cO_v[[\Gamma^\ac]].    
\end{align*}

There are natural projections
\[\pi^\textup{a}\colon \Lambda_p \to \Lambda^\textup{a}, \quad \pi^\textup{ac}\colon \Lambda_p\to \Lambda^\textup{ac}.\]
These projections extend naturally to projections on the corresponding fraction fields. By an abuse of notation, we  denote these projections by the same symbols.

Given an element $\sigma$ in $H^-_{p^\infty}$, we denote any lift of $\sigma$ to $H_{p^\infty}$ by $\tilde{\sigma}$. Note that the element $\tilde{\sigma}^{1-c}\in \Lambda_p$ does not depend on the lift $\tilde{\sigma}$ we chose by the definition of $H^+_{p^\infty}$. Thus, we obtain a well-defined map
\[\mathcal{V}\colon \Lambda^\textup{a}\to \Lambda_p.\] 
such that 
\[\pi^\textup{a}\circ \mathcal{V}(\sigma)=\sigma^2.\]

In the remainder of the article, we frequently employ the notion of Iwasawa invariants of elements in  $\Lambda^\ac$ and $\Lambda^\ac_{\cO_v}$. Note that $\cO_v$ and $R$ are local rings with  maximal ideals $\mathfrak{m}_{\cO_v}$ and $\mathfrak{m}_R$, respectively. For each of the rings $R$ and $\cO_v$ we fix a generator of the maximal ideal, denoted by $\varpi_{\cO_v}$ and $\varpi_R$, respectively. If the ring is clear from the context we omit the superscript.  

\begin{defn}\label{def:invariants}For every element $F\in \Lambda^\ac$,  the Weierstrass preparation theorem says that  
\[F=\varpi_R^m UG,\]
for  an integer $m\ge0$, a unit $U$ in $\Lambda^\ac$ and a distinguished polynomial $G$. We define the $\lambda$-invariant and $\mu$-invariant of $F$ by $\lambda(F)=\deg(G)$ and $\mu(F)=m$, respectively. If $F\in \Lambda^\ac_{\cO_v}$, we define  $\lambda(F)$ and $\mu(F)$ in the same way.\footnote{Note that the absolute ramification indices in $\cO_v$ and in $R$ are the same. In particular, the $\mu$- and $\lambda$-invariants are independent of the choice of Iwasawa algebra.}

Given a finitely generated torsion module $M$ over $\Lambda^\ac$, it is pseudo-isomorphic to 
\[
\bigoplus_{i=1}^r\Lambda^\ac/F_i,
\]
for some integer $r \geq 1$ and nonzero elements $F_i\in\Lambda^\ac$. We define the characteristic ideal of $M$ to be
\[
\Char_{\Lambda^\ac}(M)=\left(\prod_{i=1} ^rF_i\right)\subset \Lambda^\ac.
\]
We define $\mu(M)$ and $\lambda(M)$ to be $\mu(F)$ and $\lambda(F)$, respectively, where $F$ is any generator of   $\Char_{\Lambda^\ac}(M)$.
We  define these objects in a similar fashion if  $\Lambda^\ac$ is replaced by $\Lambda^\ac_{\cO_v}$.
\end{defn}

\section{Iwasawa invariants of BDP Selmer groups under congruences}
We study how the Iwasawa invariants of BDP Selmer groups behave for congruent modular forms, proving Theorem~\ref{thmA} stated in the introduction.
Throughout, we fix $f_i\in S_2(\Gamma_0(N_i),\cO)$ for $\is$ satisfying the conditions \ref{GHH}, \eqref{cong} and \ref{H0}.
Note that the hypothesis \ref{H0} implies that $H^0(K,A_i)=H^0(K_\infty,A_i)=0$.
\begin{lemma} 
\label{lemma:kidwell-condition}
Let $w$ be a prime of $K$ dividing $N^-_1N^-_2$. Let $I_w$ be the inertia subgroup inside $G_{K_w}$. Then $H^1(G_{K_w}/I_w,A_i^{I_w})$ and $H^0(K_w,T_i)$ are trivial.
\end{lemma}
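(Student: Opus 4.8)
The plan is to exploit the well-known structure of the local Galois representation $V_i$ at a prime $w$ of $K$ dividing $N_1^-N_2^-$ (equivalently, a prime of $\QQ$ that is inert in $K$ and divides the level in the ``inert'' part). First I would recall that since $w \mid N_i^-$, the prime below $w$ divides $N_i$ exactly once, so $f_i$ is either Steinberg or a quadratic twist of Steinberg at that prime; in either case the representation $V_i$ restricted to $G_{K_w}$ is an extension of $\overline{\QQ}_p(\psi)$ by $\overline{\QQ}_p(\psi\chi_{\cyc})$ for an unramified character $\psi$ whose value on Frobenius is $\pm 1$ (a unit, and not $\equiv 1$ after multiplying by the appropriate norm factor). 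I would record the consequence at the level of the lattice $T_i$ and the divisible module $A_i = V_i/T_i$, namely that $A_i^{I_w}$ sits in an exact sequence where the Frobenius acts on the relevant graded pieces through $\psi$ or $\psi\chi_{\cyc}$.

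Next, for the vanishing of $H^1(G_{K_w}/I_w, A_i^{I_w})$: since $G_{K_w}/I_w \cong \widehat{\ZZ}$ is procyclic, $H^1$ of a discrete module $M$ is the cokernel of $\Frob_w - 1$ acting on $M$. So it suffices to show $\Frob_w - 1$ is surjective (equivalently, by divisibility, has no cokernel) on $A_i^{I_w}$. Using the filtration from the previous paragraph, this reduces to checking that $\Frob_w - 1$ acts invertibly on each graded piece, i.e. that neither $\psi(\Frob_w)$ nor $\psi(\Frob_w)\cdot N(w)^{-1}$ (or the analogous eigenvalue, depending on the Hodge--Tate normalization) is congruent to $1$ modulo $\varpi$. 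Since $\psi(\Frob_w) = \pm 1$, $N(w) = \ell^2 \equiv 1 \pmod p$ can be problematic only if the eigenvalue is literally $+1$; but then $a_\ell(f_i)^2 = (\ell+1)^2$ would force the exceptional congruence, and one argues this cannot happen because it would contradict the irreducibility hypotheses built into $\eqref{cong}$ / the Weil bounds — more precisely, one invokes \ref{H0}, which is assumed throughout this section and which says exactly $H^0(K_w, A_i) = 0$, equivalently $\Frob_w - 1$ is injective on $A_i^{G_{K_w}}$; combined with the two-step filtration this yields that $\Frob_w$ has no eigenvalue $1$ on $A_i^{I_w}$ and hence $\Frob_w - 1$ is bijective there, killing the $H^1$.

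For the vanishing of $H^0(K_w, T_i)$: a $\Frob_w$-fixed vector in $T_i$ would reduce to a fixed vector in $A_i[\varpi] \subset A_i$, so $H^0(K_w, A_i) \neq 0$, contradicting \ref{H0} directly; alternatively, $H^0(K_w, T_i)$ is a $\ZZ_p$-free submodule on which $\Frob_w - 1$ vanishes, and Tate local duality identifies it with a quotient of $H^2(K_w, A_i)(\text{twist})$ which is dual to $H^0(K_w, -)$ of the Cartier dual, again controlled by the same eigenvalue conditions. I would present the $H^0(K_w, A_i) = 0 \Rightarrow H^0(K_w, T_i) = 0$ implication as the clean route since \ref{H0} is in force.

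The main obstacle I expect is the careful bookkeeping of the local representation at $w$ — in particular making sure the Steinberg-versus-ramified-principal-series dichotomy is handled uniformly and that the Frobenius eigenvalues are pinned down correctly (including the effect of the quadratic twist that can occur for non-Steinberg minimal-at-$\ell$ forms), together with the Hodge--Tate normalization so that the ``dangerous'' congruence $\Frob_w \equiv 1$ is correctly ruled out rather than the shifted one $\Frob_w \equiv N(w)$. Once the local picture is set up, both vanishing statements fall out of \ref{H0} and the procyclicity of $G_{K_w}/I_w$ essentially formally.
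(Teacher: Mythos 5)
Your proof is correct in substance, and its load-bearing step coincides with the paper's: once one knows $G_{K_w}/I_w\cong\widehat{\ZZ}$ is procyclic, the hypothesis \ref{H0} kills everything. The paper's proof is purely formal — it passes to the maximal pro-$p$ quotient $\Gal(K_w^{\cyc}/K_w)\cong\Zp$ (legitimate since $A_i$ is $p$-primary), observes that $H^0(K_w,A_i)=0$ forces $H^0(K_w^{\cyc},A_i)=0$ (a pro-$p$ group acting on a nontrivial finite $p$-group has nontrivial fixed points), and concludes that the coefficient module itself vanishes; your version instead shows $\Frob_w-1$ is bijective on $A_i^{I_w}$, which is the Pontryagin-dual formulation of the same fact (injectivity is exactly \ref{H0}, and an injective endomorphism of a cofinitely generated $\Zp$-module is bijective — I would phrase it that way rather than via ``eigenvalues'' of $\Frob_w$ on a torsion module, which is slightly loose). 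The second assertion is proved identically in both. The real difference is that your lengthy preamble on the Steinberg/twist-of-Steinberg structure of $V_i|_{G_{K_w}}$ is entirely unnecessary, and moreover its premise is not uniformly valid: the lemma quantifies over all $w\mid N_1^-N_2^-$ and both $i\in\{1,2\}$, so the rational prime below $w$ may be coprime to $N_i$ for the particular $i$ at hand (in which case $V_i$ is unramified at $w$ and the special-representation dichotomy does not apply). Since your closing paragraph already concedes that everything ``falls out of \ref{H0} and the procyclicity essentially formally,'' the clean write-up is to delete the local-automorphic analysis and keep only that formal argument.
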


\begin{proof}
Let $K_w^\cyc$ be the cyclotomic $\Zp$-extension of $K_w$ and write $G_w^\cyc=\Gal\left(\overline{K_w}/K_w^\cyc\right)$. Since $w\nmid p$, we have the inclusion $I_w\subset G_w^\cyc$ and that  
$$G_{K_w}/G_w^\cyc\cong \Gal(K_w^\textup{cyc}/K_w)\cong (G_{K_w}/I_w)^{\textup{pro-p}}\cong\Zp.$$ 
Therefore, the triviality of $H^1(G_{K_w}/I_w,A_i^{I_w})$ would follow from  showing that $$H^1\left(\Gal(K_w^\cyc/K_w),A_i^{G^\cyc_w}\right)=\{0\}.$$
Indeed, under the hypothesis \ref{H0}, we have $H^0(K_w,A_i)=\{0\}$, which implies that $H^0(K_w^\cyc,A_i)=\{0\}$. Thus, the first assertion of the lemma follows.

For the second assertion, note that $A_i\cong T_i\otimes \QQ_p/\ZZ_p$. Thus, $H^0(K_w,A_i)=\{0\}$ implies that the same holds for $H^0(K_w,T_i)$. 
\end{proof}

\begin{defn}\label{def:BDPSel} Let $\Sigma$ be the set of all primes dividing $N_1N_2p$ and let $\Sigma_0$ be the set of primes dividing $N^+_1N^+_2$. 
We define the BDP Selmer group of $f_i$ over $K_n$ by
\[
\Sel^\BDP(K_n,A_i)=\ker\left(H^1(K_\Sigma/K_n,A_i)\to \prod_{w|\Sigma\setminus\{\fp^c\}}H^1(K_{n,w},A_i) \right)
\]
and the  imprimitive BDP Selmer group (with respect to $\Sigma_0$) 
\[\Sel^\BDP_{\Sigma_0}(K_n,A_i)=\ker\left(H^1(K_\Sigma/K_n,A_i)\to \prod_{w|\Sigma\setminus (\Sigma_0\cup\{\fp^c\})}H^1(K_{n,w},A_i)
\right).\]
(Here, given a set $S$ of primes, the product  $\displaystyle \prod_{w|S}$ runs through all the primes dividing an element of $S$.)
\end{defn}
Note that $\Sel^{\BDP}(K_n,A_i)\subset \Sel^{\BDP}_{\Sigma_0}(K_n,A_i)$ and that any prime in $\Sigma_0$ is finitely decomposed in $K_\infty/K$ (see \cite[Theorem~2]{brink}). In the standard definition of BDP Selmer groups, one usually requires the cocycles to be unramified at primes away from $p$ instead of trivial (see \cite[(1.1)]{kobayashiota} or \cite[Definition~2.2]{castellaJLMS}). Our assumption that $H^0(K_w,A_i)=\{0\}$ for all $w\mid N^-_1N^-_2$ implies that the natural map
\[H^1(K_w,A_i)\to H^1(I_w,A_i)\]
is injective (see also Lemma \ref{lemma:kidwell-condition}). It then follows from \cite[the proof of Proposition 3.1]{kidwell} that our definition of BDP Selmer groups coincides with the standard one. We adopt the above definition to  make the arguments on the algebraic side more accessible to the readers.

We now prove several preliminary lemmas regarding these Selmer groups.
\begin{lemma}
\label{lemma:surj} Assume that $\Sel^\BDP(K_\infty,A_i)$ is $\Lambda^\textup{ac}_{\cO_v}$-cotorsion.
Then the global-to-local map
\[H^1(K_\Sigma/K_\infty,A_i)\to \prod_{w|\Sigma\setminus\{\mathfrak{p}^c\}} H^1(K_{\infty,w},A_i)\] is surjective.
\end{lemma}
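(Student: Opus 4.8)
The plan is to deduce the surjectivity of the global-to-local map from the vanishing of a suitable $H^2$ together with a Poitou--Tate / global Euler characteristic argument, in the style of Greenberg's treatment of Selmer groups over $\Zp$-extensions. First I would pass to a cofinitely generated framework: since $\Sel^\BDP(K_\infty,A_i)$ is assumed $\Lambda^\ac_{\cO_v}$-cotorsion, its dual has rank zero, and by Nakayama-type control arguments one expects the obstruction to surjectivity to be governed by the characteristic-zero (or generic) behaviour of the relevant cohomology groups. Concretely, I would first establish that $H^2(K_\Sigma/K_\infty, A_i) = 0$. This follows because $A_i$ has no global invariants (the excerpt notes that \ref{H0} forces $H^0(K,A_i) = H^0(K_\infty,A_i) = 0$, and $A_i \cong T_i \otimes \Qp/\Zp$ is divisible), so $H^0(K_\Sigma/K_\infty, A_i^\vee(1))$ vanishes after twisting, and Poitou--Tate duality identifies $H^2(K_\Sigma/K_\infty, A_i)$ with the dual of a certain compact Iwasawa cohomology group which one checks is torsion-free and hence, being pseudo-null over the two-dimensional-looking but effectively one-dimensional algebra, actually zero; alternatively one invokes the weak Leopoldt conjecture for $A_i$ over the $\Zp$-extension $K_\infty/K$, which is known in this setting.

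Next I would write down the Poitou--Tate nine-term exact sequence for the module $A_i$ over $K_\infty$ with respect to the set $\Sigma$ and the local conditions defining $\Sel^\BDP$. The cokernel of the global-to-local map
\[
H^1(K_\Sigma/K_\infty,A_i)\to \prod_{w\mid\Sigma\setminus\{\fp^c\}} H^1(K_{\infty,w},A_i)
\]
sits inside an exact sequence whose next term maps to $H^2(K_\Sigma/K_\infty,A_i)$ and is controlled by the Pontryagin dual of $H^1$ of the dual Selmer structure for $T_i$ (or $A_i^\vee(1)$). Granting $H^2(K_\Sigma/K_\infty,A_i)=0$ from the previous paragraph, the cokernel in question injects into $\bigl(\text{a dual compact Selmer-type group}\bigr)^\vee$. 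The BDP local condition is the \emph{strict} (i.e. trivial) condition at $\fp^c$ and the relaxed/unramified condition elsewhere, so the dual Selmer group is the one with the \emph{relaxed} condition at $\fp^c$ and strict conditions elsewhere; one then argues this dual group vanishes — here is where the cotorsion hypothesis enters decisively, since a nonzero dual group would force $\Sel^\BDP(K_\infty,A_i)^\vee$ to have positive rank, contradicting the assumption. I would make this precise via a rank count: $\corank_{\Lambda^\ac_{\cO_v}}\Sel^\BDP(K_\infty,A_i) - (\text{dual corank}) = \chi$, a global Euler-characteristic term which one computes to be $0$ using local Tate Euler characteristics at the primes in $\Sigma$ and the fact that $A_i$ has Hodge--Tate weights $0$ and $1$ (so the local term at $p$ contributes the ``expected'' rank that is exactly absorbed by dropping the condition at $\fp^c$).

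The main obstacle I anticipate is the bookkeeping at the primes above $p$, i.e. at $\fp$ and especially at the distinguished prime $\fp^c$ where the BDP condition is to impose no local condition at all. One must be careful that $K_\infty/K$ is the \emph{anticyclotomic} extension, in which both primes above $p$ are totally ramified (this is exactly why the class number hypothesis was imposed), so the local cohomology $H^1(K_{\infty,\fp^c},A_i)$ is genuinely a nontrivial $\Lambda^\ac_{\cO_v}$-module of the appropriate rank, and the Euler-characteristic computation has to account for this correctly; getting the ranks to cancel is the crux. A secondary point is justifying the vanishing of $H^2(K_\Sigma/K_\infty,A_i)$ cleanly — I would prefer to cite the weak Leopoldt conjecture in the form available for modular Galois representations over $\Zp$-extensions (e.g. via Kato's work or Perrin-Riou's, or directly from $H^0$ vanishing plus a cofinite-generation and specialization argument), rather than reproving it. Once these two inputs are in hand, the surjectivity drops out of the Poitou--Tate sequence essentially formally.
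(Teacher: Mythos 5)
Your overall strategy (work directly at the infinite level via Poitou--Tate, rather than the paper's finite-level approximation) is a legitimate alternative route, but as written it has a genuine gap at the decisive step, plus a sign error in the setup. First, the setup: you have the local conditions backwards. By Definition~\ref{def:BDPSel}, $\Sel^\BDP$ imposes the \emph{strict} (trivial) condition at every $w\mid \Sigma\setminus\{\fp^c\}$ and \emph{no} condition at $\fp^c$; the dual Selmer structure on the compact side is therefore strict at $\fp^c$ and relaxed at all other places of $\Sigma$ — the opposite of what you wrote. This matters because the rank bookkeeping you propose (the local term of corank $2$ at $\fp^c$ cancelling the global Euler characteristic term $r_2\cdot\rank T=2$) only comes out right with the conditions oriented correctly.

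The more serious issue is the vanishing of the dual Selmer group. The Poitou--Tate sequence identifies the cokernel of the global-to-local map with (a subgroup of) the Pontryagin dual of the compact dual Selmer group $\fS^*\subset \HIw(K_\Sigma/K_\infty,T_i)$, so you need $\fS^*=0$, not merely $\fS^*$ torsion. Your rank count ("a nonzero dual group would force $\Sel^\BDP(K_\infty,A_i)^\vee$ to have positive rank") only shows that $\fS^*$ is $\Lambda^\ac_{\cO_v}$-torsion: a nonzero \emph{torsion} $\fS^*$ is perfectly consistent with the cotorsion hypothesis and would leave a nonzero cokernel. The missing input is that $\HIw(K_\Sigma/K_\infty,T_i)$ is torsion-free, which holds because its torsion submodule is controlled by $H^0(K_\infty,A_i)$, and this vanishes under \ref{H0}; then $\fS^*$ is torsion and torsion-free, hence zero. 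Note that this is exactly where \ref{H0} must enter — the paper's proof uses it at the analogous moment, bounding the cokernel by $|H^1(K_\Sigma/K_n,T_{-t})_{\tors}|\le |H^0(K_\infty,A_i)|=0$. For comparison, the paper instead follows Pollack--Weston/Greenberg--Vatsal: it reduces to finiteness of the cokernel (since the local $H^1$'s have no finite-index $\Lambda^\ac_{\cO_v}$-submodules), twists by characters $\kappa^t$, and does the rank count at each finite layer $K_n$ using Greenberg's propositions before bounding the cokernel uniformly in $n$. Your infinite-level argument would be somewhat cleaner once the two points above are repaired, but as it stands the conclusion does not follow.
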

\begin{proof}
We follow the strategy of \cite[proof of Proposition~A.2]{pollack_weston_2011}. As in loc. cit., since the local cohomology groups $ H^1(K_{\infty,w},A_i)$ do not contain nontrivial $\Lambda^\ac_{\cO_v}$-submodules of finite index, it suffices to show that the cokernel of the global-to-local map is finite. 

Fix $\is$. For an integer $t$, let $B_t$ denote the tensor product of $G_K$-representations $A_i\otimes \kappa^t$, where $\kappa$ is a fixed character \[\kappa\colon \Gamma^\ac\to 1+p\Z_p\] that sends a topological generator of $\Gamma^\ac$ to $1+p$. Note that $B_t\cong A$ as $\Gal(K_\Sigma/K_\infty)$-modules and thus  we may replace $A_i$ by $B_t$  in the statement of the  lemma.

It follows from Lemma \ref{lemma:kidwell-condition} and \cite[Propositions 4.2 and  4.3]{kidwell} that $H^1(K_{\infty,w},B_t)$ is $\Lambda_{\cO_v}^\ac$-cotorsion for all $w|\Sigma\setminus\{p\}$. Furthermore, by assumption,  $\Sel^\BDP(K_\infty,A_i)$ is also $\Lambda^\textup{ac}_{\cO_v}$-cotorsion.
Therefore, we may choose  $t$ such that 
\[\rank_{\cO_v}(\Sel^{\BDP}(K_n,B_t)^\vee)=\rank_{\mathcal{O}_{v}}(H^1(K_{n,w},B_t)^\vee)=0\] 
for all $n\ge0$ and all $w| \Sigma\setminus\{p\}$. When we need to specify at which level the place $w$ is defined, we will write $w_n$ and $w_\infty$, respectively.

If $w|\fp$, \cite[Proposition 1]{greenberg89} tells us that $H^1(K_{\infty,v_\infty},B_t)^\vee$ is of rank 2 over  $\Lambda^\textup{ac}_{\cO_v}$ for all $t$. Consequently, for all but finitely many choices of $t$, the $\cO_v$-module $H^1(K_{n,v_n},B_t)^\vee$ is of rank $2p^n$ for all $n \geq 0$.  Furthermore, \cite[Proposition 3]{greenberg89} implies that $H^1(K_\Sigma/K_n,B_t)^\vee$ has $\cO_v$-rank at least $2p^n$.\footnote{While it is assumed that $\cO_v=\Z_p$  in loc. cit., the proof carries over to the more general case in a straightforward manner.} Thus, we may choose $t$ such that the cokernel of the global-to-local map
\[
H^1(K_\Sigma/K_n,B_t)\to \prod_{w| \Sigma\setminus\{\mathfrak{p}^c\}} H^1(K_{n,w},B_t)
\] 
is finite. It remains to show that this finite cokernel is uniformly bounded independently of $n$, which can be proved via the same argument given in \cite[proof of Proposition (2.1)]{greenberg-vatsal}. We give an outline of the proof for the convenience of the reader. 

Let $T_{-t}=T_i\otimes\kappa^{-t}$. Note that $\textup{Hom}(B_t,L_v/\cO_v(1))=T_{-t}$. Consider the following global-to-local maps
\begin{align*}
    \alpha_0 \colon &H^1(K_\Sigma/K_n,B_t)\to \prod_{w| \Sigma\setminus\{\mathfrak{p}^c\}}H^1(K_{n,w},B_t),\\
    \alpha\colon &H^1(K_\Sigma/K_n,B_t)\to \prod_{w| \Sigma} H^1(K_{n,w},B_t),\\
    \beta\colon &H^1(K_\Sigma/K_n,T_{-t})\to \prod_{w| \Sigma} H^1(K_{n,w},T_{-t}).
\end{align*}
By an abuse of notation, we denote the prime of $K_n$ lying above $\fp^c$ by the same symbol.
Let \[G_{B_t}=\langle\textup{im}(\alpha),H^1(K_{n,\fp^c},B_t)\rangle\] be the smallest subgroup of $\prod_{w|\Sigma}H^1(K_{n,w},B_t)$ containing the image of $\alpha$ and $H^1(K_{n,\fp^c},B_t)$. Let $G_M\subset \prod_{w| \Sigma} H^1(K_{n,w},T_{-t})$ be the orthogonal complement of $G_{B_t}$ under local Tate duality. 

We have chosen $t$ such that  the cokernel of $\alpha_0$ is finite. For such $t$, the group $G_M$ is also finite. 
 Note that $\ker(\beta)^\vee$ is a subgroup of $H^2(K_\infty/K_n,B_t)$, which is trivial (by the assumption that $p$ is odd). Let $S_M$ be the preimage of $G_M$ under $\beta$. It follows that $S_M$ is finite and that its cardinality is bounded by $H^1(K_\Sigma/K_n,T_{-t})_{\textup{tors}}$. A standard argument  (see \cite[page 19]{greenberg-vatsal}) then shows that 
\[\left\vert H^1(K_\Sigma/K_n,T_{-t})_{\textup{tors}}\right\vert \le\left \vert H^0(K_\infty,A_i)\right\vert ,\]
which is trivial under \ref{H0} (see the discussion before Lemma~\ref{lemma:kidwell-condition}). This shows that $\alpha_0$ is surjective. In particular, the cardinality of the cokernel of $\alpha_0$ is indeed bounded independently of $n$ as required.
\end{proof}
\begin{lemma}
\label{no-finite-submodule} If the $\Lambda_{\cO_v}^\ac$-module $\Sel^\BDP_{\Sigma_0}(K_\infty,A_i)^\vee$ is torsion, then it does not contain a non-trivial finite submodule.
\end{lemma}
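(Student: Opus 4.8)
The plan is to follow the standard strategy of Greenberg--Vatsal \cite[\S2]{greenberg-vatsal} (see also \cite[Appendix]{pollack_weston_2011} and its anticyclotomic adaptations). Write $\Lambda=\Lambda^\ac_{\cO_v}$. Since $\cO_v$ is a discrete valuation ring with uniformizer $\varpi$, any nonzero finite $\Lambda$-module has nonzero $\varpi$-torsion; hence it is enough to show that $X:=\Sel^\BDP_{\Sigma_0}(K_\infty,A_i)^\vee$ has no $\varpi$-torsion, or equivalently, by Pontryagin duality, that the group $\Sel^\BDP_{\Sigma_0}(K_\infty,A_i)$ is $\varpi$-divisible. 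This is the reformulation I would aim to prove directly.

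First, I would record that the hypothesis makes $\Sel^\BDP(K_\infty,A_i)$ $\Lambda$-cotorsion as well: the quotient $\Sel^\BDP_{\Sigma_0}(K_\infty,A_i)/\Sel^\BDP(K_\infty,A_i)$ injects into $\bigoplus_{w\mid\Sigma_0}H^1(K_{\infty,w},A_i)$, and each of these local modules is $\Lambda$-cotorsion because the primes in $\Sigma_0$ lie above $N_1^+N_2^+$, hence are coprime to $p$ and finitely decomposed in $K_\infty/K$ (as in the proof of Lemma~\ref{lemma:surj}, via \cite[Propositions~4.2, 4.3]{kidwell} and \cite{brink}). Consequently Lemma~\ref{lemma:surj} applies, and since the target product there surjects onto the smaller one defining $\Sel^\BDP_{\Sigma_0}$, it yields the short exact sequence
\[0\longrightarrow\Sel^\BDP_{\Sigma_0}(K_\infty,A_i)\longrightarrow H^1(K_\Sigma/K_\infty,A_i)\longrightarrow\bigoplus_{w\mid\Sigma\setminus(\Sigma_0\cup\{\fp^c\})}H^1(K_{\infty,w},A_i)\longrightarrow0.\]

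Next I would establish $\varpi$-divisibility of the two right-hand terms. For the local terms this is a local cohomology computation: at $w\mid\fp$ it follows from the description of $H^1(K_{\infty,\fp},A_i)$ used in the proof of Lemma~\ref{lemma:surj} (cf. \cite[Proposition~1]{greenberg89}), and at $w\mid N_1^-N_2^-$ it follows from Lemma~\ref{lemma:kidwell-condition} together with \cite[Propositions~4.2, 4.3]{kidwell}. For the global term, $\varpi$-divisibility of $H^1(K_\Sigma/K_\infty,A_i)$ is equivalent, via the long exact sequence attached to $0\to A_i[\varpi]\to A_i\xrightarrow{\varpi}A_i\to0$ and the identity $H^0(K_\infty,A_i)=0$ coming from \ref{H0}, to the vanishing $H^2(K_\Sigma/K_\infty,A_i[\varpi])=0$; equivalently, to the $\cO_v$-torsion-freeness of $H^2(K_\Sigma/K_\infty,T_i)$, which I would deduce from \ref{H0} and Poitou--Tate duality as in \cite[\S2]{greenberg-vatsal}. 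Granting this, the snake lemma for multiplication by $\varpi$ on the displayed sequence shows that $\Sel^\BDP_{\Sigma_0}(K_\infty,A_i)/\varpi$ is the cokernel of $H^1(K_\Sigma/K_\infty,A_i)[\varpi]\to\bigoplus_w H^1(K_{\infty,w},A_i)[\varpi]$; comparing with the corresponding map for $A_i[\varpi]$-coefficients, whose vertical comparison maps are surjective because $H^0(K_\infty,A_i)=0$, this cokernel vanishes once the global-to-local map for $A_i[\varpi]$ is surjective, which I would prove by rerunning the argument of Lemma~\ref{lemma:surj} verbatim with $A_i$ replaced by $A_i[\varpi]$. This gives $\Sel^\BDP_{\Sigma_0}(K_\infty,A_i)/\varpi=0$, completing the proof.

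The part I expect to require genuine work is the global cohomological input: the $\varpi$-divisibility of $H^1(K_\Sigma/K_\infty,A_i)$, i.e.\ the weak-Leopoldt-type vanishing $H^2(K_\Sigma/K_\infty,A_i[\varpi])=0$, equivalently the $\cO_v$-torsion-freeness of $H^2(K_\Sigma/K_\infty,T_i)$, together with the surjectivity of the global-to-local map on $\varpi$-torsion. Once these are in place, everything else is bookkeeping with the exact sequences above.
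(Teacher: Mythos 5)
Your reduction ``$X$ has no $\varpi$-torsion $\Rightarrow$ $X$ has no finite submodule'' is logically valid, but the target you set yourself is strictly stronger than the lemma and is not a consequence of its hypotheses. Indeed, $\Sel^\BDP_{\Sigma_0}(K_\infty,A_i)/\varpi=0$ is equivalent to $X[\varpi]=0$, which in particular forces $\mu(X)=0$; by contrast, a module such as $\Lambda^\ac_{\cO_v}/(\varpi)$ is torsion, has no nontrivial finite submodule, and is entirely $\varpi$-torsion. The lemma is meant to hold for any torsion dual Selmer group, including those with positive $\mu$-invariant --- note that the paper treats the vanishing of $\mu(\Sel^\BDP(K_\infty,A_1)^\vee)$ as a \emph{separate} hypothesis in Corollary~\ref{cor-equality-of-invariants}, which would be redundant if your stronger statement followed from torsionness and \ref{H0} alone. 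The place where your argument actually breaks is the final step: the surjectivity of the global-to-local map with $A_i[\varpi]$-coefficients cannot be obtained by ``rerunning Lemma~\ref{lemma:surj} verbatim.'' That proof is a $\Lambda^\ac_{\cO_v}$-corank count exploiting twists $B_t=A_i\otimes\kappa^t$ of the \emph{divisible} module $A_i$ (coranks $2$, $2p^n$, etc.), and it has no analogue for the finite module $A_i[\varpi]$. By Poitou--Tate, the cokernel of the mod-$\varpi$ global-to-local map is dual to a residual Selmer group for the Tate dual of $A_i[\varpi]$, and its vanishing is essentially equivalent to the $\mu=0$ statement you are implicitly trying to prove; it cannot be deduced from \ref{H0} and torsionness. (The intermediate claim that $H^2(K_\Sigma/K_\infty,A_i[\varpi])=0$, equivalently that $H^1(K_\Sigma/K_\infty,A_i)$ is $\varpi$-divisible, is likewise not justified by the inputs you cite.)

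The paper's proof circumvents this entirely by measuring finiteness against $\Gamma^\ac$ rather than against $\varpi$: after twisting by a suitable $\kappa^t$, it shows $\Sel^\BDP_{\Sigma_0}(K_\infty,B_t)_{\Gamma^\ac}=0$. This is done by combining the surjectivity of the level-$0$ global-to-local map (Lemma~\ref{lemma:surj} with $n=0$), the identification $H^1(K_\Sigma/K_\infty,B_t)^{\Gamma^\ac}\cong H^1(K_\Sigma/K,B_t)$ from inflation--restriction, and the vanishing of $H^1(K_\Sigma/K_\infty,B_t)_{\Gamma^\ac}$, which follows from weak Leopoldt ($H^2=0$ over $K_\infty$) via \cite[Propositions 3--5]{greenberg89}. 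A torsion module whose $\Gamma^\ac$-coinvariants vanish has Pontryagin dual with trivial $\Gamma^\ac$-invariants, hence no nonzero finite submodule --- and this conclusion is perfectly compatible with positive $\mu$. If you want to salvage your write-up, you should replace the $\varpi$-divisibility target with this coinvariants computation; the first half of your argument (cotorsionness of the local terms at $\Sigma_0$ and the short exact sequence from Lemma~\ref{lemma:surj}) is correct and is exactly the input the paper uses.
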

\begin{proof}
Let $B_t$ and $T_{-t}$ be defined as in the proof of Lemma~\ref{lemma:surj}. Once again, we may replace $A_i$ by $B_t$. We choose $t$ such that 
\[\rank_{\cO_v}(\Sel^{\BDP}_{\Sigma_0}(K_n,B_t)^\vee)=0,\quad\text{and}\quad \rank_{\cO_v}(H^1(K_\Sigma/K_n,B_t)^\vee)=2p^n\] for all $n\ge0$. Note that \[\rank_{\Lambda^\textup{ac}_{\cO_v}}(H^1(K_\Sigma/K_\infty,B_t)^\vee)=2.\] It follows from \cite[Proposition 3]{greenberg89} that 
\[\rank_{\Lambda^\textup{ac}_{\cO_v}}(H^2(K_\Sigma/K_\infty,B_t)^\vee)=0.\]
By \cite[Proposition 4 and 5]{greenberg89}, $H^1(K_\Sigma/K_\infty,B_t)^\vee$ has no nontrivial finite submodule. We can now conclude as in \cite[Proof of proposition 4.14]{Greenberg} that $H^1(K_\Sigma/K_\infty,B_t)_{\Gamma^\textup{ac}}$ is trivial. 

Take $n=0$ in the proof of Lemma \ref{lemma:surj}. Recall that 
\[
H^1(K_\Sigma/K,B_t)\to \left(\prod_{w|\Sigma\setminus(\Sigma_0\cup\{\mathfrak{p}^c\})} H^1(K_{w},B_t)\right)
\] 
is surjective.  As $H^0(K_\infty,B_t)=H^0(K_\infty,A)=0$, it follows from the inflation-restriction exact sequence that $H^1(K_\Sigma/K_\infty,B_t)^{\Gamma^{\textup{ac}}}\cong H^1(K_\Sigma/K,B_t)$. We thus obtain a surjection
\[
H^1(K_\Sigma/K,B_t)^{\Gamma^\textup{ac}}\to \left(\prod_{v\in \Sigma\setminus(\Sigma_0\cup\{\mathfrak{p}^c\})} H^1(K_{\infty,w},B_t)\right)^{\Gamma^\textup{ac}}.
\]

The short exact sequence
\[0\to \Sel_{\Sigma_0}^\BDP(K_\infty,B_t)\to H^1(K_\Sigma/K_\infty,B_t)\to \left(\prod_{w|\Sigma\setminus(\Sigma_0\cup\{\mathfrak{p}^c\})} H^1(K_{\infty,w},B_t)\right)\to 0\]
gives the exact sequence 
\begin{align*}H^1(K_\Sigma/K,B_t)^{\Gamma^\textup{ac}}&\to \left(\prod_{w|\Sigma\setminus(\Sigma_0\cup\{\mathfrak{p}^c\})}H^1(K_{\infty,w},B_t)\right)^{\Gamma^\textup{ac}}\to \\&\to \Sel_{\Sigma_0}^\BDP(K_\infty,B_t)_{\Gamma^\textup{ac}}\to H^1(K_\Sigma/K_\infty,B_t)_{\Gamma^\textup{ac}}.\end{align*}
As the first map is surjective and the last term is trivial, we deduce that
\[\Sel_{\Sigma_0}^\BDP(K_\infty,B_t)_{\Gamma^\textup{ac}}=0,\]
which implies that $\Sel^\BDP_{\Sigma_0}(K_\infty,B_t)^\vee$ does not contain a nontrivial finite submodule.
\end{proof}

We now relate the Iwasawa invariants of the standard BDP Selmer groups to their imprimitive counterparts.

\begin{lemma}
\label{lemma:lambda-invariants}
For any prime $l$ in $K$ that lies above a rational prime $q$ that is split in $K/\Q$, we define the polynomial $P_l^{f_i}(X)=\det(1-X\textup{Frob}_l|(V_i)_{I_l})$, where $I_l$ is the inertia group at $l$. Let $$\mathcal{P}^{f_i}_l=P^{f_i}_l(N(l)^{-1}\gamma_l)\in \Lambda^\ac_{\cO_v},$$  where $\gamma_l$ is the Frobenius of $l$ in $K_\infty/K$ and $N$ denotes the norm map from $K$ to $\Q$.

Then $\Sel^\BDP_{\Sigma_0}(K_\infty,A_i)$ is cotorsion over $\Lambda^\ac_{\cO_v}$ if and only if $\Sel^\BDP(K_\infty,A_i)$ is cotorsion over $\Lambda^\ac_{\cO_v}$. Furthermore, when these Selmer groups are indeed cotorsion over $\Lambda^\ac_{\cO_v}$, we have 
\begin{align*}
    \mu(\Sel^\BDP_{\Sigma_0}(K_\infty,A_i)^\vee)&=\mu(\Sel^\BDP(K_\infty,A_i)^\vee),\\
    \lambda(\Sel^{\BDP}_{\Sigma_0}(K_\infty,A_i)^\vee)&=\lambda(\Sel^{\BDP}(K_\infty,A_i)^\vee)+\sum_{l| \Sigma_0}\lambda(\mathcal{P}^{f_i}_l).
\end{align*}
\end{lemma}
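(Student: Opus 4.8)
The plan is to compare the defining sequences of $\Sel^\BDP(K_\infty,A_i)$ and $\Sel^\BDP_{\Sigma_0}(K_\infty,A_i)$, which differ only in the local conditions at primes dividing $\Sigma_0$ (where $\Sel^\BDP$ imposes triviality of the cocycle and $\Sel^\BDP_{\Sigma_0}$ imposes no condition). Concretely, there is an exact sequence
\[
0\to \Sel^\BDP(K_\infty,A_i)\to \Sel^\BDP_{\Sigma_0}(K_\infty,A_i)\to \prod_{w\mid\Sigma_0}H^1(K_{\infty,w},A_i),
\]
so that the cokernel $\Sel^\BDP(K_\infty,A_i)$ injects into $\Sel^\BDP_{\Sigma_0}(K_\infty,A_i)$ with quotient a submodule of $\bigoplus_{w\mid\Sigma_0}H^1(K_{\infty,w},A_i)$. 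Since each prime $q\mid N^+_1N^+_2$ is split in $K/\QQ$ and finitely decomposed in $K_\infty/K$ (as noted after Definition~\ref{def:BDPSel}, using \cite[Theorem~2]{brink}), the local terms are governed by the arithmetic at the finitely many primes $w$ of $K_\infty$ above $q$, and one expects these to be $\Lambda^\ac_{\cO_v}$-cotorsion. The first step is therefore to record the structure of $H^1(K_{\infty,w},A_i)^\vee$ for $w\mid\Sigma_0$: invoke \cite[Propositions~4.2 and 4.3]{kidwell} (as already done in the proof of Lemma~\ref{lemma:surj}) to see that this module is torsion over $\Lambda^\ac_{\cO_v}$, and compute its characteristic ideal. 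The expected answer is exactly $(\mathcal{P}^{f_i}_l)$ up to units: the unramified local cohomology at $w$ is controlled by the Euler factor $\det(1-\Frob_l X\mid (V_i)_{I_l})$ evaluated at the cyclotomic/anticyclotomic variable, which is precisely $\mathcal{P}^{f_i}_l$ with $X=N(l)^{-1}\gamma_l$, and the characteristic ideal of $H^1(K_{\infty,w},A_i)^\vee$ equals the characteristic ideal of $H^0(K_{\infty,w},A_i)^\vee$ by the local Euler characteristic formula (the $H^2$ being controlled by duality and $H^0$ of the Tate twist).

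Given this, the equivalence of cotorsionness is immediate: if $\Sel^\BDP_{\Sigma_0}$ is cotorsion then so is its submodule $\Sel^\BDP$; conversely, if $\Sel^\BDP$ is cotorsion, then $\Sel^\BDP_{\Sigma_0}$ sits in an extension of a submodule of the cotorsion module $\bigoplus_{w\mid\Sigma_0}H^1(K_{\infty,w},A_i)$ by the cotorsion module $\Sel^\BDP$, hence is cotorsion. For the invariants, the strategy is to show that the map $\Sel^\BDP_{\Sigma_0}(K_\infty,A_i)\to \prod_{w\mid\Sigma_0}H^1(K_{\infty,w},A_i)$ is \emph{surjective}, so that we get a genuine short exact sequence
\[
0\to \Sel^\BDP(K_\infty,A_i)^\vee\to \Sel^\BDP_{\Sigma_0}(K_\infty,A_i)^\vee\to \prod_{w\mid\Sigma_0}H^1(K_{\infty,w},A_i)^\vee\to 0
\]
after dualizing; then multiplicativity of characteristic ideals in exact sequences of torsion modules gives $\mu(\Sel^\BDP_{\Sigma_0})=\mu(\Sel^\BDP)+\sum_{l\mid\Sigma_0}\mu(\mathcal{P}^{f_i}_l)$ and similarly for $\lambda$. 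Since each $\mathcal{P}^{f_i}_l$ has coefficients that are units away from its constant structure — more precisely, because $f_i$ has good reduction at $q\nmid N_i$ so $P^{f_i}_l(X)=1-a_q(f_i)X+qX^2$ and $\mathcal{P}^{f_i}_l$ is a unit times a distinguished polynomial whose $\mu$-invariant vanishes (the constant term is $1$) — we get $\mu(\mathcal{P}^{f_i}_l)=0$, yielding the stated $\mu$-equality, while the $\lambda$-contributions add up to $\sum_{l\mid\Sigma_0}\lambda(\mathcal{P}^{f_i}_l)$.

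The surjectivity of the localization map on $\Sel^\BDP_{\Sigma_0}$ is the one genuinely nontrivial input, and it is the main obstacle. The plan is to deduce it from Lemma~\ref{lemma:surj}: the global-to-local map $H^1(K_\Sigma/K_\infty,A_i)\to\prod_{w\mid\Sigma\setminus\{\fp^c\}}H^1(K_{\infty,w},A_i)$ is surjective whenever $\Sel^\BDP(K_\infty,A_i)$ is cotorsion, and composing with the projection to the factors indexed by $w\mid\Sigma_0$ shows that every class in $\prod_{w\mid\Sigma_0}H^1(K_{\infty,w},A_i)$ is hit by a global class, which by definition lifts to $\Sel^\BDP_{\Sigma_0}$. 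One subtlety: Lemma~\ref{lemma:surj} is stated under the hypothesis that $\Sel^\BDP(K_\infty,A_i)$ is cotorsion, so in the proof one must first handle the direction where cotorsionness of $\Sel^\BDP_{\Sigma_0}$ is assumed — but here one can run the argument of Lemma~\ref{no-finite-submodule} in reverse, or simply observe that cotorsionness of $\Sel^\BDP$ follows from that of $\Sel^\BDP_{\Sigma_0}$ by the submodule argument above, so the hypothesis of Lemma~\ref{lemma:surj} is available in whichever direction one needs it. Finally, I would double-check that the characteristic ideal computation of $H^1(K_{\infty,w},A_i)^\vee$ really produces $\mathcal{P}^{f_i}_l$ and not its variant with $\Frob_l$ replaced by $\Frob_l^{-1}$ or with a different normalization of $N(l)$; this is a matter of carefully tracing through the identification in \cite[Propositions~4.2 and 4.3]{kidwell} together with the action of $\Gamma^\ac$ on the decomposition group, and matching conventions with the statement of the lemma.
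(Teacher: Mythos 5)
Your proposal is correct and follows essentially the same route as the paper: the defining exact sequence comparing the two Selmer groups, surjectivity of the localization map via Lemma~\ref{lemma:surj}, the identification of $\Char(H^1(K_{\infty,w},A_i)^\vee)$ with $(\mathcal{P}^{f_i}_l)$ (the paper cites the proof of Proposition (2.4) of Greenberg--Vatsal for this), and additivity of Iwasawa invariants in short exact sequences. Only cosmetic slips: Pontryagin duality reverses the arrows in your dualized sequence, and for $q\in\Sigma_0$ dividing $N_i$ itself the form $f_i$ need not have good reduction at $q$ (the polynomial then has degree $\le 1$), though $\mu(\mathcal{P}^{f_i}_l)=0$ still holds.
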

\begin{remark}\label{rk:local-terms}
It can be verified that $\mathcal{P}^{f_i}_l$ has trivial $\mu$-invariant and that it is a unit if $l\mid q$ for a rational prime $q$ such that $a_q(f_i)=0$.
\end{remark}
\begin{proof}
By definition, we have an exact sequence
\begin{equation}\label{eq:SEQ-BDP}
    0\to \Sel^{\BDP}(K_\infty, A_i)\to \Sel^{\BDP}_{\Sigma_0}(K_\infty,A_i)\to \prod_{w|\Sigma_0}H^1(K_{\infty,w},A_i).
\end{equation}
Every prime dividing $\Sigma_0$ is finitely decomposed in $K_\infty$. Thus, $H^1(K_{\infty,w},A_i)$ is cotorsion over $\Lambda_{\cO_v}^\ac$ (see \cite[proof of Proposition (2.4)]{greenberg-vatsal}). This proves the equivalence of the cotorsionness of the two Selmer groups.

Suppose that these Selmer groups are indeed cotorsion. Then  Lemma~\ref{lemma:surj} tells us that the last arrow in \eqref{eq:SEQ-BDP} is surjective. As discussed in \cite[proof of Proposition (2.4)]{greenberg-vatsal}, $H^1(K_{\infty,w},A_i)^\vee$ is $\Lambda_{\cO_v}^\ac$-torsion, with characteristic ideal generated by $\mathcal{P}^{f_i}$. Furthermore, its $\mu$-invariant is zero. Therefore, the lemma follows from the additivity of Iwasawa invariants in exact sequences.
\end{proof}

\begin{lemma}
\label{lemma:isom} Let $\varpi$ be a uniformizer of $\cO_v$.
Assume that $A_1[\varpi]\cong A_2[\varpi]$ as $G_K$-modules.\footnote{Note that this is a weaker condition than \eqref{cong}.} Then 
\[\Sel^{\BDP}_{\Sigma_0}(K_\infty,A_1)[\varpi]\cong\Sel^{\BDP}_{\Sigma_0}(K_\infty,A_2)[\varpi].\]
\end{lemma}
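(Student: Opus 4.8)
The plan is to compare the two imprimitive BDP Selmer groups after reduction modulo $\varpi$ by showing that each is cut out by the same local conditions on the \emph{same} cohomology group $H^1(K_\Sigma/K_\infty, A_1[\varpi]) \cong H^1(K_\Sigma/K_\infty, A_2[\varpi])$. The hypothesis $A_1[\varpi] \cong A_2[\varpi]$ gives an isomorphism $A_1[\varpi] \cong A_2[\varpi]$ of $G_K$-modules, and since $A_\Sigma/K_\infty$ is unramified outside $\Sigma$ for both, we get a canonical identification of the global cohomology groups. So the real content is to match up the kernels of the local restriction maps, i.e.\ to show
\[
\Sel^{\BDP}_{\Sigma_0}(K_\infty, A_i)[\varpi] = \ker\left( H^1(K_\Sigma/K_\infty, A_i[\varpi]) \to \prod_{w \mid \Sigma \setminus (\Sigma_0 \cup \{\fp^c\})} H^1(K_{\infty,w}, A_i[\varpi]) \right)
\]
compatibly with the identification of the ambient groups. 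Once this is established for $i=1,2$, the isomorphism of the right-hand sides follows and we are done.

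First I would use the short exact sequence $0 \to A_i[\varpi] \to A_i \xrightarrow{\varpi} A_i \to 0$ of $G_K$-modules. Taking $G_{K_\infty}$-cohomology and using that $H^0(K_\infty, A_i) = 0$ (guaranteed by \ref{H0}, as noted before Lemma~\ref{lemma:kidwell-condition}), we obtain $A_i(K_\infty)/\varpi = 0$ and hence a short exact sequence
\[
0 \to H^1(K_\Sigma/K_\infty, A_i[\varpi]) \to H^1(K_\Sigma/K_\infty, A_i) \xrightarrow{\varpi} H^1(K_\Sigma/K_\infty, A_i),
\]
which identifies $H^1(K_\Sigma/K_\infty, A_i[\varpi])$ with $H^1(K_\Sigma/K_\infty, A_i)[\varpi]$. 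The same computation at each local field $K_{\infty,w}$ (again $H^0(K_{\infty,w}, A_i) = 0$ for $w \mid p N_1^- N_2^-$ by \ref{H0}, and for the split primes $w \mid N_1^+ N_2^+$ we can either check $H^0$ directly or simply note that it suffices to have the relevant square commute) identifies $H^1(K_{\infty,w}, A_i[\varpi])$ with a submodule of $H^1(K_{\infty,w}, A_i)[\varpi]$. Then a diagram chase with the restriction maps, plus the definition of $\Sel^{\BDP}_{\Sigma_0}$ as a kernel, yields $\Sel^{\BDP}_{\Sigma_0}(K_\infty, A_i)[\varpi] \cong \ker\big(H^1(K_\Sigma/K_\infty, A_i[\varpi]) \to \prod_w H^1(K_{\infty,w}, A_i[\varpi])\big)$. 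The key point making this clean is that $\Sel^{\BDP}_{\Sigma_0}$ is defined using the \emph{trivial} local condition (rather than the unramified one) at primes away from $p$, so the local conditions are literally kernels of restriction maps and behave well under the multiplication-by-$\varpi$ snake lemma.

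The main obstacle — or rather, the one point requiring care — is handling the local terms at the split primes $w \mid N_1^+ N_2^+$ and at the anticyclotomic primes $w \mid \fp$, where $H^0(K_{\infty,w}, A_i)$ need not vanish, so the identification $H^1(K_{\infty,w}, A_i[\varpi]) \cong H^1(K_{\infty,w}, A_i)[\varpi]$ can fail and one only gets a surjection from $H^1(K_{\infty,w}, A_i[\varpi])$. However, this does not affect the argument: in the definition of $\Sel^{\BDP}_{\Sigma_0}$ one quotients out precisely by the primes in $\Sigma_0 \cup \{\fp^c\}$, so these problematic places are \emph{removed} from the defining product, and the remaining local terms are exactly the ones where \ref{H0} applies. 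Thus the diagram chase goes through with the ambient group identified via \eqref{cong}-type reasoning applied at level $\varpi$, and the snake lemma argument produces the desired isomorphism. I would finish by combining the two identifications for $i=1$ and $i=2$ through the $G_K$-isomorphism $A_1[\varpi] \cong A_2[\varpi]$, which is compatible with all restriction maps and hence with the Selmer conditions, giving $\Sel^{\BDP}_{\Sigma_0}(K_\infty, A_1)[\varpi] \cong \Sel^{\BDP}_{\Sigma_0}(K_\infty, A_2)[\varpi]$.
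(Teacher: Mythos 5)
Your proof is correct and is essentially the paper's argument: use \ref{H0} and the sequence $0\to A_i[\varpi]\to A_i\xrightarrow{\varpi} A_i\to 0$ to identify $\Sel^{\BDP}_{\Sigma_0}(K_\infty,A_i)[\varpi]$ with the Selmer group of the residual module $A_i[\varpi]$, then transport along $A_1[\varpi]\cong A_2[\varpi]$. One small correction to your discussion of the "problematic" places: the prime $\fp$ is \emph{not} removed from the defining product (only $\fp^c$ and the primes in $\Sigma_0$ are), but this is harmless because \ref{H0} applies to all $w\mid pN_1^-N_2^-$, hence in particular to $w\mid\fp$, so $H^0(K_{\infty,w},A_i)=0$ and the identification $H^1(K_{\infty,w},A_i[\varpi])\cong H^1(K_{\infty,w},A_i)[\varpi]$ does hold there.
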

\begin{proof}
Fix $\is$. Let $w$ be a prime of $K_\infty$ dividing $ \Sigma\setminus \Sigma_0$. Recall that $H^0(K_w,A_i)=\{0\}$. In particular, since $K_{\infty,w}/K_w$ is a pro-$p$ extension, $H^0(K_{\infty,w},A_i)=\{0\}$. On taking cohomology of the short exact sequence 
\[0\to A_i[\varpi]\to A_i\to A_i\to 0,\]
we deduce that
\[H^1(K_{\infty,w},A_i[\varpi])\cong H^1(K_{\infty,w},A_i)[\varpi].\]
We also have
\[H^1(K_\infty,A_i[\varpi])\cong H^1(K_\infty,A_i)[\varpi].\]
Thus, 
\[\Sel^{\BDP}_{\Sigma_0}(K_\infty,A_i)[\varpi]\cong \Sel^{\BDP}_{\Sigma_0}(K_\infty,A_i[\varpi]),\]
where $\Sel^\BDP_{\Sigma_0}(K_\infty,A_i[\varpi])$ is defined in the obvious manner.

The  assumption on the isomorphism between $A_1[\varpi]$ and $A_2[\varpi]$ implies that
\[\Sel^{\BDP}_{\Sigma_0}(K_\infty,A_1[\varpi])\cong \Sel^{\BDP}_{\Sigma_0}(K_\infty,A_2[\varpi]).\]
Hence the result follows.
\end{proof}

We are now ready to prove Theorem~\ref{thmA}.
\begin{corollary}
\label{cor-equality-of-invariants}
Assume that $A_1[\varpi]\cong A_2[\varpi]$ as $G_K$-modules. Assume further that $\Sel^\BDP(K_\infty,A_1)$ is $\Lambda^\textup{ac}_{\cO_v}$-cotorsion and that $\mu(\Sel^{\BDP}(K_\infty,A_1)^\vee)=0$. 
Then $\Sel^{\BDP}(K_\infty,A_2)$ is cotorsion,  $\mu(\Sel^{\BDP}(K_\infty,A_2)^\vee)=0$ and 
\[\lambda(\Sel^{\BDP}(K_\infty,A_1)^\vee)+\sum_{l\in \Sigma_0}\lambda(\mathcal{P}^{f_1}_{l})=\lambda(\Sel^{\BDP}(K_\infty,A_2)^\vee)+\sum_{l\in \Sigma_0}\lambda(\mathcal{P}^{f_2}_{l}).\]
\end{corollary}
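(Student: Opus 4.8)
The plan is to route the entire argument through the imprimitive Selmer groups $\Sel^\BDP_{\Sigma_0}(K_\infty,A_i)$, which enjoy two features the primitive ones lack: their $\varpi$-torsion depends only on $A_i[\varpi]$ (Lemma~\ref{lemma:isom}), and their Pontryagin duals have no nontrivial finite submodule once they are torsion (Lemma~\ref{no-finite-submodule}); one then transfers the outcome back to the primitive Selmer groups using Lemma~\ref{lemma:lambda-invariants}, which for $\mu$-invariants is an equality and for $\lambda$-invariants contributes the explicit local correction terms. Throughout, write $\Lambda=\Lambda^\ac_{\cO_v}$; this is isomorphic to $\cO_v[[T]]$, hence a two-dimensional regular local ring, with residue field $k=\cO_v/\varpi$, and set $X_i=\Sel^\BDP_{\Sigma_0}(K_\infty,A_i)^\vee$. (Recall that \ref{H0} is in force throughout this section, so the cited lemmas apply.)

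First I would record the elementary structure-theoretic fact that a finitely generated $\Lambda$-module $X$ is torsion with $\mu(X)=0$ if and only if $X/\varpi X$ is finite. Dualizing the multiplication-by-$\varpi$ sequence identifies $X_i/\varpi X_i$ with $\bigl(\Sel^\BDP_{\Sigma_0}(K_\infty,A_i)[\varpi]\bigr)^\vee$, so this condition on $X_i$ amounts to finiteness of $\Sel^\BDP_{\Sigma_0}(K_\infty,A_i)[\varpi]$. From the hypothesis that $\Sel^\BDP(K_\infty,A_1)$ is cotorsion with vanishing $\mu$-invariant, Lemma~\ref{lemma:lambda-invariants} gives the same for $\Sel^\BDP_{\Sigma_0}(K_\infty,A_1)$, hence $\Sel^\BDP_{\Sigma_0}(K_\infty,A_1)[\varpi]$ is finite; Lemma~\ref{lemma:isom} (which uses only $A_1[\varpi]\cong A_2[\varpi]$) then forces $\Sel^\BDP_{\Sigma_0}(K_\infty,A_2)[\varpi]$ to be finite; so $X_2$ is torsion with $\mu(X_2)=0$, and a second application of Lemma~\ref{lemma:lambda-invariants} shows $\Sel^\BDP(K_\infty,A_2)$ is cotorsion with vanishing $\mu$-invariant.

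For the equality of $\lambda$-invariants I would use the refinement that a finitely generated torsion $\Lambda$-module $X$ with $\mu(X)=0$ and no nontrivial finite submodule satisfies $\lambda(X)=\dim_k(X/\varpi X)$. Indeed, the absence of a finite submodule upgrades the structure-theoretic pseudo-isomorphism to an embedding $X\hookrightarrow E:=\bigoplus_j\Lambda/(g_j)$ with finite cokernel and the $g_j$ distinguished (no $\varpi$-power elementary summands, as $\mu(X)=0$); a snake-lemma computation for multiplication by $\varpi$, using that $E$ is $\varpi$-torsion-free and that $\dim_k C[\varpi]=\dim_k C/\varpi C$ for any finite $\cO_v$-module $C$, then gives $\dim_k(X/\varpi X)=\dim_k(E/\varpi E)=\sum_j\deg g_j=\lambda(X)$. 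By Lemma~\ref{no-finite-submodule} this applies to $X_1$ and $X_2$, so
\[
\lambda(X_i)=\dim_k\Sel^\BDP_{\Sigma_0}(K_\infty,A_i)[\varpi],\qquad i\in\{1,2\},
\]
and the two right-hand sides agree by Lemma~\ref{lemma:isom}. Hence $\lambda(X_1)=\lambda(X_2)$, and substituting the relation $\lambda(X_i)=\lambda(\Sel^\BDP(K_\infty,A_i)^\vee)+\sum_{l\mid\Sigma_0}\lambda(\mathcal{P}^{f_i}_l)$ from Lemma~\ref{lemma:lambda-invariants} yields exactly the asserted identity.

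Since the substantive input has already been isolated in Lemmas~\ref{lemma:surj}, \ref{no-finite-submodule}, \ref{lemma:lambda-invariants} and \ref{lemma:isom}, what remains is essentially bookkeeping. The one point requiring care — and the reason the comparison must be made for $\Sel^\BDP_{\Sigma_0}$ rather than $\Sel^\BDP$ — is that reading $\lambda$ off the mod-$\varpi$ reduction simultaneously demands the vanishing of finite submodules and dependence only on $A_i[\varpi]$, both of which hold only after passing to the imprimitive Selmer group; the primitive statement is then recovered by accounting for the explicit local factors $\mathcal{P}^{f_i}_l$.
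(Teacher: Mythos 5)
Your proposal is correct and follows essentially the same route as the paper's own proof: both pass to the imprimitive Selmer groups, use Lemma~\ref{lemma:lambda-invariants} to transfer (co)torsionness and Iwasawa invariants back and forth, use Lemma~\ref{lemma:isom} to compare the $\varpi$-torsion, and use Lemma~\ref{no-finite-submodule} to read $\lambda$ off the size of the $\varpi$-torsion. The only difference is that you spell out the structure-theoretic facts (the characterization of $\mu=0$ via finiteness of $X/\varpi X$ and the snake-lemma computation giving $\lambda(X)=\dim_k(X/\varpi X)$ in the absence of finite submodules) which the paper leaves implicit.
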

\begin{proof}
Note that $\Sel_{\Sigma_0}^{\BDP}(K_\infty,A_1)[\varpi]$ is finite due to the assumption that $\Sel_{\Sigma_0}^{\BDP}(K_\infty,A_1)$ is $\Lambda^\textup{ac}_{\cO_v}$-cotorsion and that its $\mu$-invariant vanishes.
By the proof of Lemma~\ref{lemma:isom},
\[
\Sel_{\Sigma_0}^{\BDP}(K_\infty,A_1)[\varpi]\cong \Sel_{\Sigma_0}^{\BDP}(K_\infty,A_2)[\varpi].
\]
In particular, $\Sel_{\Sigma_0}^{\BDP}(K_\infty,A_2)[\varpi]$ is also finite. This implies that the $\Lambda_{\cO_v}^\ac$-module $\Sel^{\BDP}(K_\infty,A_2)^\vee$ is torsion  with vanishing  $\mu$-invariant. Furthermore, \[\lambda(\Sel_{\Sigma_0}^{\BDP}(K_\infty,A_i)^\vee)=\ord_\varpi(\vert \Sel_{\Sigma_0}^{\BDP}(K_\infty,A_i)[\varpi]\vert)\] by Lemma \ref{no-finite-submodule}. Hence, the asserted equation involving $\lambda$-invariants  follows from  Lemmas \ref{lemma:lambda-invariants} and \ref{lemma:isom}.
\end{proof}
\subsection{An application}
We conclude this section by showing that Theorem~\ref{thmA} implies the existence  of an infinite family of modular forms  whose BDP Selmer groups over $K_\infty$ have vanishing  $\mu$-invariant. 
\begin{defn}
    Let $f=\sum a_n q^n\in S_2(\Gamma_0(N),\overline{\Z})$. Let $N=N_1N_2N_0$ be a decomposition on $N$ into pairwise coprime integers, such that $N_1N_2$ is square-free. Let $w$ be a place above $p$ in the coefficient ring of $f$. Let $\phi$ and $\psi$ be Dirichlet characters. We say that $f$ has partial Eisenstein decent by $(\phi,\psi,N_1,N_2,N_0)$ if
    \begin{itemize}
        \item $a_l\equiv \phi(l)+l\psi(l) \pmod w   \quad\forall (l,N)=1$
        \item $a_l\equiv \phi(l) \pmod w\quad\forall l\mid N_1$
        \item $a_l\equiv l\psi(l)\pmod w\quad \forall l\mid N_2$
        \item $a_l\equiv 0\pmod w\quad \forall l\mid N_0$.
    \end{itemize}
\end{defn}
\begin{lemma}\label{lem:example}
    Let $f$ be a newform of level $N$ and weight $2$ and let $\chi$ be a quadratic character such that $\chi(p)=-1$. Assume that $p\in\{3,5\}$ and that $f$ has partial Eisenstein decent by $(\chi,\chi,N_1,N_2,N_0)$. Let $K$ be an imaginary quadratic field satisfying \ref{GHH} with respect to $N\textup{cond}(\chi)$. We assume that $N^-=1$ and that $11$ and $19$ split in $K$. Then 
    \[\mu((\Sel^\BDP(K_\infty, A)^\vee)=0.\]
\end{lemma}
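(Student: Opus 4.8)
The plan is to deduce the vanishing of $\mu$ from Corollary~\ref{cor-equality-of-invariants} (that is, Theorem~\ref{thmA}) by comparing $f$ with a reference newform $g$ whose mod $\varpi$ representation is reducible and whose BDP Selmer group over $K_\infty$ is already known to be $\Lambda^\ac_{\cO_v}$-cotorsion with trivial $\mu$-invariant. For $p=5$ the reference is built from the newform attached to $X_0(11)$, and for $p=3$ from the newform attached to $X_0(19)$: in both cases the cuspidal subgroup of $J_0(\ell)$, $\ell\in\{11,19\}$, supplies a rational $p$-torsion point, so the residual representation is a (generically non-split) extension with semisimplification $\mathbf{1}\oplus\omega$, where $\omega$ is the mod $p$ cyclotomic character. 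Twisting by $\chi$ yields a newform $g$ with $A_g[\varpi]$ an extension of $\chi\omega$ by $\chi$; since we assume $N^-=1$, that $11$ and $19$ split in $K$, and that $K$ satisfies \ref{GHH} with respect to $N\,\textup{cond}(\chi)$ (so every prime dividing $\textup{cond}(\chi)$ splits in $K$), the form $g$ again satisfies \ref{GHH} with $N^-_g=1$.

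The first substantive step is to translate the partial Eisenstein descent hypothesis into information about $A=A_f$. The congruences $a_l\equiv\chi(l)+l\chi(l)\pmod\varpi$ for $(l,N)=1$, together with the Chebotarev density theorem and the Brauer--Nesbitt theorem (and the fact that $\chi^2$ is trivial, consistent with $\det A[\varpi]=\omega$), force the semisimplification of $A[\varpi]$ to be $\chi\oplus\chi\omega$; the congruences at the primes dividing $N_1$, $N_2$ and $N_0$ fix the orderings of the Jordan--H\"older factors on the relevant decomposition groups, and with $p\in\{3,5\}$ this is enough to identify $A[\varpi]$ with $A_g[\varpi]$ as $G_K$-modules, $g$ being chosen to match the ramification of $A$. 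One then checks \ref{H0} for both $f$ and $g$: at $w\mid p$ it follows from $\chi(p)=-1$, since $\chi|_{G_{\QQ_p}}$ is unramified with $\chi(\Frob_p)=-1\neq1$ while $\chi\omega|_{G_{\QQ_p}}$ is ramified (as $\omega$ is), so neither Jordan--H\"older character of $A[\varpi]$ is trivial on $G_{K_w}=G_{\QQ_p}$, whence $A[\varpi]^{G_{K_w}}=0$ and $H^0(K_w,A)=0$; at primes dividing $N^-_1N^-_2$ the condition is vacuous since both products equal $1$.

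It remains to know that $\Sel^\BDP(K_\infty,A_g)$ is $\Lambda^\ac_{\cO_v}$-cotorsion with $\mu(\Sel^\BDP(K_\infty,A_g)^\vee)=0$; by Lemmas~\ref{lemma:lambda-invariants}, \ref{no-finite-submodule} and the argument of Lemma~\ref{lemma:isom} this amounts to the finiteness of $\Sel^\BDP_{\Sigma_0}(K_\infty,A_g[\varpi])$. This is the main obstacle: because $A_g[\varpi]$ is reducible, the standard sources of cotorsionness and of $\mu=0$ --- the nonvanishing theorems of Hsieh and Burungale, or the rank bounds of \cite{lei-zhao,HL21}, all of which presuppose (absolute) irreducibility --- do not apply. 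Instead one exploits the reducibility itself: the exact sequence $0\to\psi_1\to A_g[\varpi]\to\psi_2\to0$ with $\{\psi_1,\psi_2\}=\{\chi,\chi\omega\}$ gives, after imposing the BDP local conditions, an exact sequence relating $\Sel^\BDP_{\Sigma_0}(K_\infty,A_g[\varpi])$ to Selmer-type groups of the characters $\psi_1|_{G_K}$ and $\psi_2|_{G_K}$, whose anticyclotomic Iwasawa theory is governed by the two branches of the Katz $p$-adic $L$-function; the smallness of $p$ together with the standing hypothesis that the class number of $K$ is prime to $p$ then forces these character Selmer groups to be finite (the anticyclotomic analogue of the role of Ferrero--Washington in Greenberg--Vatsal's argument). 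Granting this, Corollary~\ref{cor-equality-of-invariants} applied with $A_1=A_g$ and $A_2=A$ yields that $\Sel^\BDP(K_\infty,A)$ is cotorsion and $\mu(\Sel^\BDP(K_\infty,A)^\vee)=0$; the local factors $\mathcal{P}^{f_i}_l$ appearing in that corollary contribute only to the $\lambda$-invariant and play no role here.
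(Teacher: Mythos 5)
Your overall strategy is the same as the paper's: compare $f$, via Corollary~\ref{cor-equality-of-invariants}, with the quadratic twist by $\chi$ of the level-$11$ (for $p=5$) or level-$19$ (for $p=3$) newform, and feed in the known cotorsionness and vanishing of $\mu$ for that twist. Your verification of \ref{H0} at $w\mid p$ (using $\chi(p)=-1$ and the ramification of $\omega$) and your observation that $N^-_g=1$ are correct and even more detailed than the paper's. But there are genuine gaps at precisely the two points where the paper leans on external theorems. The first is the isomorphism $A[\varpi]\cong A_g[\varpi]$ as $G_K$-modules, which is what Corollary~\ref{cor-equality-of-invariants} actually requires. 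Chebotarev plus Brauer--Nesbitt only pins down the \emph{semisimplification} of $A[\varpi]$ as $\chi\oplus\chi\omega$, and your claim that matching the orderings of the Jordan--H\"older factors on the decomposition groups at primes dividing $N$ identifies $A[\varpi]$ with a ``generically non-split'' reference extension is false as stated: two non-split extensions of $\chi\omega$ by $\chi$ with the same local shape need not be isomorphic (the extension class lives in a global $H^1$ not detected by finitely many restrictions), and a non-split extension is never isomorphic to a split one. The paper sidesteps this by invoking Kriz's theorem that partial Eisenstein descent with $p\in\{3,5\}$ forces $\overline{\rho}_f\cong\F_p(\chi)\oplus\F_p(\chi\omega)$ to be \emph{split}, and by choosing the specific curves 11a.2 and 19.a2, for which $E[p]\cong\F_p\oplus\F_p(\omega)$ is likewise split, so the two residual representations are literally the same semisimple module. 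To repair your argument you must either prove splitness of $\overline{\rho}_f$ or match global extension classes; you do neither.

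The second gap is the input that $\Sel^\BDP(K_\infty,A_g)^\vee$ is $\Lambda^\ac_{\cO_v}$-torsion with trivial $\mu$-invariant. You correctly flag this as the main obstacle --- the irreducibility-based results of Hsieh, Burungale, and \cite{lei-zhao,HL21} indeed do not apply --- and you sketch a reduction to character Selmer groups governed by branches of the Katz $p$-adic $L$-function, but you then write ``Granting this''; the assertion that smallness of $p$ together with $p\nmid h(K)$ forces finiteness of those character Selmer groups is not an argument. This is a substantial theorem in the residually reducible setting, and the paper simply cites Castella--Grossi--Lee--Skinner (their Theorem~1.5.1) for the vanishing of $\mu(\Sel^\BDP(K_\infty,E^\chi[p^\infty])^\vee)$. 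As written, then, your proposal reproduces the paper's architecture but leaves both of its essential external inputs unproved, and the first of the two is replaced by reasoning that does not suffice.
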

\begin{proof}
    Let $E/\Q$ be the elliptic curve with LMFDB label 11a.2 if $p=5$ and the elliptic curve with LMFDB label 19.a2 if $p=3$. Then 
    $$E[p]\cong\F_p\oplus \F_p(\omega)$$ as $G_\Q$-representations. Let $g$ be the modular form associated to the quadratic twist $E^{\chi}$. Recall that $v$ is a fixed place of $K$ above $p$. Note that $H^0(K_v,E^\chi[p^\infty])=\{0\}$ by construction. As discussed in \cite[Remark 32]{kriz}, we have $a_n(f)\equiv a_n(g)\pmod \varpi$ for all $n$ that are coprime to $pN\textup{cond}(E^{\chi})$. By \cite[Theorem 34]{kriz}, $$\overline{\rho}_f\cong\F_p(\chi)\oplus \F_p(\chi\omega)$$ as $G_\QQ$-representations. We recall from \cite[theorem 1.5.1]{CGLS} that the $\mu$-invariant of $\Sel^\BDP(K_\infty,E^\chi[p^\infty])^\vee$ vanishes. Thus, the lemma follows from Corollary \ref{cor-equality-of-invariants}.
\end{proof}
\begin{remark}
    Let $M/\Q$ be the extension cut out by $\chi$. If $q$ is a rational prime coprime to $p$ that ramifies in $M$, then $\chi\omega$ is ramified at $q$ as well and $q^2\mid N$. Recall that $N_1N_2$ is square free. Thus, $\chi$ is unramified at all primes dividing $N_1N_2$. Assume now that $q$ is inert in $K$ and let $w_K$ and $w_M$ be places above $q$ in $K$ and $M$, respectively. Then $M_{w_M}\subset K_{w_K}$. Thus, $H^0(K_{w_K},A)\neq \{0\}$. So, we can only apply Corollary \ref{cor-equality-of-invariants} in the case $N^-=1$.
\end{remark}

\section{$\BDP$ $p$-adic $L$-functions of congruent modular forms}
The main goal of this section is to prove Theorem~\ref{thmB}. As discussed in the introduction, our strategy is to relate the  BDP $p$-adic $L$-function to the anticyclotomic specialization of the product of the Katz $p$-adic $L$-function and a 2-variable Rankin--Selberg $p$-adic $L$-function. We review these objects in the following subsection.

\subsection{Review on $p$-adic $L$-functions}
 Let $\psi$ be a Hecke character of $K$ of conductor $\mathfrak{f}$. We say that $\psi$ is of infinity type $(k,j)$ for integers $k$ and $j$ if 
 \[\psi((\alpha))=\alpha^k\overline{\alpha}^j\quad \textup{for all }\alpha \equiv 1 \pmod {\mathfrak{f}}.\]
 If $\mathfrak{f}=(1)$, then $\psi((\alpha))=\alpha^k\overline{\alpha}^j$ for all principal ideals. Note that a Hecke character of trivial conductor and infinity type $(k,j)$ can only exist if $\vert \mathcal{O}_K^\times \vert$ divides $k-j$.
 We say that $\psi$ is anticyclotomic if it factors through $H_{p^\infty}^-$. Note that such a character is of infinity type $(k,-k)$.  Let $\psi$ be an anticyclotomic character of trivial conductor. Let $q$ be a rational prime that is is inert in $K/\Q$ and write $\mathfrak{q}=(q)$. Then we have
 \[\psi(\mathfrak{q})=q^k\left(\overline{q}\right)^{-k}=1.\]
 
The $L$-function associated to a Hecke character $\psi$ over $K$ is given by
\[L(\psi,s)=\sum \frac{\psi(\mathfrak{a})}{N(\mathfrak{a})^s}\]
for $\Re(s)\gg0$, where the sum runs over all integral ideals of $K$ that are coprime to $\mathfrak{f}$. We have an Euler product
\[L(\psi,s)=\prod_{\frakp} \left(1-\frac{\psi(\mathfrak{p})}{N(\mathfrak{p})^s}\right)^{-1},\]
where the product runs over all prime ideals coprime to $\mathfrak{f}$.

\begin{theorem}
 There exist CM periods $\Omega\in \C^\times$ and $\Omega_p\in R^\times$ and an element $L_{K,p}\in \Lambda$ such that
\[L_{K,p}({\psi}^{-1})=\Gamma(k)\Omega^{j-k}\Omega_p^{k-j}(\sqrt{\vert D\vert})^j(2\pi)^{-j} (1-{\psi}(\overline{\mathfrak{p}}))  (1-p^{-1}{{\psi}}(\mathfrak{p}))L({\psi}^{-1},0)\]for all Hecke characters of infinity type $(k,j)$ with $0\le -j<k$ and trivial conductor.
\end{theorem}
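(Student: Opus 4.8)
The statement is the classical theorem of Katz on the two-variable $p$-adic $L$-function attached to the imaginary quadratic field $K$ in which $p=\mathfrak{p}\overline{\mathfrak{p}}$ splits, so the plan is not to reprove it from scratch but to recall Katz's construction (as refined by de Shalit and by Hida--Tilouine) and then reconcile normalizations with the displayed interpolation formula. The proof will therefore consist of setting up the geometric input, extracting the measure, and matching conventions.

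First I would recall the geometric framework. Since $p$ splits in $K$, an elliptic curve with CM by $\mathcal{O}_K$ acquires good ordinary reduction at the prime singled out by the fixed embedding $\overline{\QQ}\hookrightarrow\overline{\Qp}$, so one may work on the Igusa tower over the ordinary locus of the modular curve of the relevant level. Katz's Eisenstein measure furnishes a measure $\mu^{\mathrm{Eis}}$ on $\Zp^\times\times\Zp^\times$ valued in $p$-adic modular forms, interpolating (suitably normalized) nearly-holomorphic Eisenstein series at all classical weights. Fixing a CM point $x_0$ on the Igusa tower — the triple consisting of $\CC/\mathcal{O}_K$, an auxiliary level structure, and a trivialization of the formal group — and evaluating $\mu^{\mathrm{Eis}}$ at $x_0$, then pushing forward along the Galois action through the appropriate quotient of $H_{p^\infty}$, produces a measure on the relevant Galois group and hence an element $L_{K,p}\in\Lambda$. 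The two transcendental periods enter at precisely this point: the complex uniformization $\CC/\mathcal{O}_K$ contributes $\Omega\in\CC^\times$, and the chosen trivialization of the formal group contributes $\Omega_p\in R^\times$.

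Next I would establish the interpolation property. For a Hecke character $\psi$ of infinity type $(k,j)$ with $0\le -j<k$ and trivial conductor, the value $L_{K,p}(\psi^{-1})$ is by construction the value at $x_0$ of the nearly-holomorphic Eisenstein series picked out by $\psi$. Damerell's formula together with Shimura's theory of CM periods identifies this value with the algebraic part of $L(\psi^{-1},0)$: one recovers $L(\psi^{-1},0)$ multiplied by the archimedean factor $\Gamma(k)(\sqrt{\vert D\vert})^j(2\pi)^{-j}$, with the transcendental normalization absorbed into the ratio $\Omega_p^{k-j}\Omega^{j-k}$ that compares the $p$-adic and complex trivializations. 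The modified Euler factors $(1-\psi(\overline{\mathfrak{p}}))$ and $(1-p^{-1}\psi(\mathfrak{p}))$ are then forced by the $p$-stabilization/$p$-depletion built into the Eisenstein measure, which removes the Euler factors at $\mathfrak{p}$ and at $\overline{\mathfrak{p}}$ and thereby inserts exactly these two terms.

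The one genuinely delicate point — and the place where care is required rather than new ideas — is matching conventions: the exact shape of the transcendental factor $\Gamma(k)\Omega^{j-k}\Omega_p^{k-j}(\sqrt{\vert D\vert})^j(2\pi)^{-j}$ and the orientation of the Euler factors (which of $\mathfrak{p},\overline{\mathfrak{p}}$ carries which factor, and whether one writes $1-p^{-1}\psi(\mathfrak{p})$ or $1-\psi(\mathfrak{p})N(\mathfrak{p})^{-1}$, noting $N(\mathfrak{p})=p$) depend on the conventions for the infinity type, on the choice of $\mathfrak{p}$ determined by $\overline{\QQ}\hookrightarrow\overline{\Qp}$, and on the normalization of the CM periods. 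This is bookkeeping to be read off from the chosen reference; beyond it, the theorem is Katz's construction recorded in the notation of this article.
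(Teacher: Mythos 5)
Your proposal is correct and matches the paper's treatment: the paper's entire proof is a citation to Katz (Section 5.3.0 of his 1978 paper), and your outline is simply an expanded account of that same construction (Eisenstein measure, CM point on the Igusa tower, Damerell's formula, periods, and the convention-matching). Nothing further is needed.
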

\begin{proof}
This is proved in \cite[Section 5.3.0]{katz78}.
\end{proof}
\begin{theorem}\label{thm:BDP}
Let $\mathcal{N}^+$ be an ideal of $\cO_K$  such that $\mathcal{O}_K/\mathcal{N}^+\cong \Z/N^+\Z$.
    There exists a $p$-adic L-function $L^\BDP_p(f)\in \Lambda^\ac$ such that for each anticyclotomic Hecke character $\psi$ of trivial conductor  and infinity type $(-m,m)$, where $m\equiv 0\ \pmod{p-1}$, we have
    \begin{align*}L^\BDP_p(f)(\psi)^2=& \left(\frac{\Omega_p}{\Omega_k}\right)^{4m }\frac{\Gamma(m)\Gamma(m+1)\psi({\mathfrak{N}^+})^{-1}}{4(2\pi)^{1-2m}\sqrt{D_K}^{2m-1}}\\
    & \times\alpha(f)(1-a_p\psi(\overline{\fp})p^{-1}-\psi(\overline{\fp})^2p^{-1})^2L(f,\chi,1),\end{align*}
    where $\alpha(f)\in\cO_v$ is defined by
    \[\alpha(f) =\begin{cases} \frac{\langle f, f\rangle }{\langle f^{\mathrm{JL}}, f^{\mathrm{JL}}\rangle} &N^->1,\\
    1&\text{otherwise.}
    \end{cases}\] Here, $\langle-,-\rangle$ denotes the Petersson inner product and $f^{\mathrm{JL}}$ denotes Jacquet--Langlands transform of $f$, which are normalized as in \cite[Sections 1-2]{prasanna2006}.
\end{theorem}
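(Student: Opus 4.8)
The existence of $L_p^\BDP(f)$ and the precise shape of the interpolation formula are a recollection and renormalization of constructions already available, and the plan is to assemble them, treating $N^-=1$ and $N^->1$ separately and then reconciling conventions.

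When $N^-=1$, I would take $L_p^\BDP(f)$ to be the anticyclotomic $p$-adic $L$-function of Bertolini--Darmon--Prasanna \cite{bertolinidarmonprasanna13} and Brako\v{c}evi\'{c} \cite{miljan}: by construction it lies in $\Lambda^\ac$, and its square interpolates the central Rankin--Selberg values $L(f,\chi,1)$ as $\psi$ ranges over anticyclotomic Hecke characters of trivial conductor and infinity type $(-m,m)$ in the relevant critical range. The congruence $m\equiv 0\pmod{p-1}$ is exactly what places these classical characters in the interpolation range of an element of $\Lambda^\ac=R[[\Gamma^\ac]]$. To pin the normalization down --- in particular, to see the CM periods $\Omega,\Omega_p$ emerge as the ones normalizing Katz' $p$-adic $L$-function recalled above, which after squaring is the origin of the factor $(\Omega_p/\Omega)^{4m}$ --- I would use the factorization of $L_p^\BDP(f)^2$ into the product of $L_{K,p}$ and the anticyclotomic specialization of the two-variable Rankin--Selberg $p$-adic $L$-function of $f$, and then insert the known interpolation formulas of the two factors. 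The archimedean bookkeeping then produces $\Gamma(m)\Gamma(m+1)$, the power of $2\pi$ and the power of $\sqrt{D_K}$, while the $p$-adic factor comes out as follows: since $p$ splits in $K$, the modified Euler factor factors through the two primes above $p$, and anticyclotomicity of $\psi$ turns $\psi(\fp)$ into $\psi(\overline{\fp})^{-1}$, so writing everything out gives exactly $\bigl(1-a_p\psi(\overline{\fp})p^{-1}-\psi(\overline{\fp})^2p^{-1}\bigr)^2$ up to an explicit scalar absorbed into the remaining factors. Here $\alpha(f)=1$.

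When $N^->1$, condition \ref{GHH} guarantees that $f$ transfers, via Jacquet--Langlands, to a newform $f^{\mathrm{JL}}$ on the Shimura curve attached to the quaternion algebra ramified exactly at the primes dividing $N^-$, and I would take $L_p^\BDP(f)$ to be Hunter Brooks' $p$-adic $L$-function \cite{HB15}, built from the $p$-adic Abel--Jacobi images of generalized Heegner cycles attached to this curve. Its interpolation formula has the same shape as in the split case, except that the period appearing is the Petersson norm $\langle f^{\mathrm{JL}},f^{\mathrm{JL}}\rangle$ of the quaternionic form rather than $\langle f,f\rangle$; inserting the ratio $\alpha(f)=\langle f,f\rangle/\langle f^{\mathrm{JL}},f^{\mathrm{JL}}\rangle$, with Petersson norms normalized as in Prasanna \cite[\S1--2]{prasanna2006}, rewrites the formula uniformly in terms of $\langle f,f\rangle$. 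Prasanna's integrality theorem shows that this ratio is an algebraic number lying in $\cO_v$, which is the assertion $\alpha(f)\in\cO_v$, and squaring then yields the stated formula with the extra factor $\alpha(f)$.

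I expect the main obstacle to be not conceptual but the convention-matching running through the whole argument: the constructions of Bertolini--Darmon--Prasanna, Brako\v{c}evi\'{c} and Hunter Brooks, together with Katz' and the Rankin--Selberg $p$-adic $L$-functions, use mutually incompatible normalizations for the CM periods, for the placement of the $2\pi i$ and $\sqrt{|d_K|}$ factors, for the Gamma factors, and even for the sign of the infinity type, so the bulk of the work is a careful translation between these conventions to reach the single normalized formula in the statement; and in the case $N^->1$ one must also verify that Prasanna's comparison of Petersson norms applies in the generality needed, so that $\alpha(f)$ is genuinely well defined and $v$-integral.
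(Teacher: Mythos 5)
Your overall plan matches the paper's: the result is a recollection of the constructions of Bertolini--Darmon--Prasanna, Brako\v{c}evi\'{c} and Hunter Brooks, with the interpolation formula taken from the $p$-adic Waldspurger formula (the paper cites \cite[Proposition~2.1]{castellawan1607}, noting that the argument there goes through for general $\cO_v$, and \cite[Theorem~5.6]{burungale2} when $|\cO_K^\times|=2$), and with $\alpha(f)\in\cO_v$ coming from Prasanna's integrality theorem for the ratio of Petersson norms. One step of your sketch, however, runs backwards relative to the paper's logical order and would be circular here: you propose to \emph{pin down the normalization} of $L_p^\BDP(f)$ in the $N^-=1$ case via the factorization of $L_p^\BDP(f)^2$ into $L_{K,p}$ times the anticyclotomic Rankin--Selberg $p$-adic $L$-function. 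In this paper that factorization is Theorem~\ref{thm:integrality-initial-case}, which is proved \emph{after} Theorem~\ref{thm:BDP} precisely by comparing the interpolation formulas of the two sides (following \cite[Proposition~2.7]{castellawan1607}); so the interpolation formula of $L_p^\BDP(f)$ must be established first, directly from the $p$-adic Waldspurger formula, and cannot be deduced from the factorization without an independent proof of the latter. If you replace that step by a direct appeal to the Waldspurger-type interpolation formula of BDP/Hunter Brooks (suitably renormalized as in \cite[Section~4]{BCK21}), your argument coincides with the paper's.
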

\begin{proof}
    The construction of $L_p^\BDP(f)$ originates from \cite{bertolinidarmonprasanna13} and \cite{miljan}. The normalization we employ here is the one given in \cite[Section 4]{BCK21}. See \cite[Proposition 2.1]{castellawan1607} for the interpolation formula. Note that in loc. cit. it is assumed that $\cO_v=\Z_p$. But the $p$-adic Waldspurger formula used in the proof holds in our current setting. The same formula can also be deduced from \cite[Theorem 5.6]{burungale2} if $\vert \mathcal{O}_K^\times \vert =2$.\footnote{Note that \cite{castellawan1607} and \cite{burungale2} use opposite conventions for infinity types of Hecke characters.}
\end{proof}

Let $N$ be an integer that is coprime to $p$. Let $S_2(\Gamma_0(N),\cO_v)$  be the set of cusp forms of level $\Gamma_0(N)$ with coefficients in $\cO_v$. Set
\[
\g=\sum_{(\fa,\fp)=1}[\fa]q^{N(\fa)}\in\Zp[[H_{\fp^\infty}]][[q]]
\]
to be a CM Hida family over $\Zp[[H_{\fp^\infty}]]$.

Given a character of $H_{p^\infty}$, we may factorize it in a unique way $\varphi\psi$, where $\varphi$ factors through $H_{\fp^\infty}$ and $\psi$ factors through the norm map $H_{p^\infty}\rightarrow \Zp^\times$. Following \cite[\S6.3]{LLZ2}, we define:
\begin{defn}
Let    $\cL_{N,p}:S_2(\Gamma_0(N),\cO_v)\rightarrow \Frac \Lambda$ be the $\cO_v$-linear map satisfying
\[
\cL_{N,p}(h)(\varphi\psi)=\frac{\langle \g(\varphi)^*,\Xi_{1/N|d_K|}(h,\varphi\psi)^{\ord,p}\rangle_{N\vert d_K\vert}}{\langle \g(\varphi),\g(\varphi)\rangle_{ \vert d_K\vert}},
\]
where $\langle-,-\rangle_{M}$ denotes the Petersson product at level $\Gamma_1(M)\cap \Gamma_0(p)$, 
\[
\Xi_{1/N|d_K|}(h,\varphi\psi)^{\ord,p}=e_{\ord}[\cE_{1/N|d_K|}(\psi-1,-1-\varphi_\QQ-\psi)\cdot h].
\]
Here, $\g(\varphi)^*$ denotes the complex conjugate of the specialization of $\g$ at $\varphi$, $e_{\ord}$ is Hida's ordinary projector, $\cE_{1/N|d_K|}(-,-)$ denotes the $p$-depleted Eisenstein series over $\mathrm{Spec}\ \Zp[[\Zp^\times]]^2$ given in \cite[Definition~5.3.1]{LLZ1}  and $\varphi_\QQ$ denotes the composition of $\varphi$ with $\Zp^\times\hookrightarrow (\mathcal{\cO}_K\times \Zp)^\times\rightarrow H_{p^\infty}$. Note that we have written the characters additively. 
\end{defn}
When $h\in S_2(\Gamma_0(N),\cO_v)$ is a normalized Hecke eigenform, $\cL_{N,p}(h)$ is a 2-variable Rankin--Selberg $p$-adic $L$-function attached to $h$, interpolating complex $L$-values of the base change of $h$ to $K$ twisted by Hecke characters of certain infinity type (see the discussion in \cite[end of \S6.3]{LLZ2}).

\begin{remark}
\label{rem:linearity}
    Recall that the Petersson inner product is linear in the first coordinate and skew linear in the second. We have chosen $L$ to be a number field containing the coefficients of the new forms $f_1$ and $f_2$. In particular, we can assume that $L$ is totally real. Using the interpolating property of the 2-variable Rankinn--Selberg $p$-adic $L$-function we see that for $f_1,f_2$ in $S_2(\Gamma_0(N),\cO_v)$ and $a\in \cO_v$
    \[\mathcal{L}(h_1-h_2)=\mathcal{L}(h_1)-\mathcal{L}(h_2),\quad \mathcal{L}(ah)=a\mathcal{L}(h).\]
\end{remark}

\subsection{Anticyclotomic specialization}
In this subsection, we study the anticyclotomic specialization of $L_{K,p}$ and $\cL_{N,p}$. Throughout, we fix a normalized Hecke eigenform $f\in S_2(\Gamma_0(N),\cO_v)$, where $N$ is coprime to $p\cdot d_K$. Note that $f$ may be an oldform at level $N$. We factorize $N=N^+N^-$ and assume that \ref{GHH} holds as before. 

\begin{defn}
\item[i)]Let $\iota\colon \Lambda^\ac\to \Lambda^\ac$ be the $R$-linear map sending an element $\gamma\in\Gamma^\ac$ by $\gamma\lmto \gamma^2$. Let $L^{\textup{ac}}_{K,p}\in \Lambda^\textup{ac}$ be the image of $L_{K,p}$ under the composition $\iota\circ \pi^\textup{ac}$.
\item[ii)] Let $\mathcal{L}^{\textup{ac}}_{N,p}(f)\in \Frac{\Lambda^{\textup{ac}}}$ be the image of $\mathcal{L}_{N,p}(f)$ under $\pi^{\textup{ac}}$.
\end{defn}

\begin{theorem}
\label{thm:integrality-initial-case}
Assume that $f$ is a newform of level $N$ satisfying \ref{div} and \ref{sq-free}. 
Then there exists a unit  $u_K\in (\Lambda^{\ac})^\times$ (only depending on $K$)
such that $$\mathcal{L}_{p,N}^\textup{ac}(f)L^\textup{ac}_{K,p}=\alpha(f)Nu_K\sigma^{-1}_{\mathcal{N}^+}L_p^\BDP(f)^2\in \Lambda^\textup{ac},$$ where $\sigma_{\mathcal{N}^+}$ is the image of $\mathcal{N}^+$ under the Artin homomorphism and $\mathcal{N}^+$ is given as in the statement of Theorem~\ref{thm:BDP}.
\end{theorem}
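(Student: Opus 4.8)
The plan is to deduce this identity of $p$-adic $L$-functions by comparing interpolation formulas at a Zariski-dense set of points. Since $\mathcal{L}_{N,p}^{\ac}(f)$ a priori lies only in $\Frac\Lambda^{\ac}$, write $\mathcal{L}_{N,p}^{\ac}(f)=A/B$ with $A,B\in\Lambda^{\ac}$ and $B\neq0$; it then suffices to show
\[
A(\psi)\,L_{K,p}^{\ac}(\psi)=\alpha(f)\,N\,u_K\,\sigma_{\mathcal{N}^+}^{-1}(\psi)\,L_p^{\BDP}(f)(\psi)^2\,B(\psi)
\]
for all but finitely many anticyclotomic Hecke characters $\psi$ of trivial conductor and infinity type $(-m,m)$ with $m\equiv0\pmod{p-1}$, $m\gg0$. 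Such $\psi$ lie in the geometric interpolation range of $L_p^{\BDP}(f)$ and form an infinite set of closed points of $\operatorname{Spec}\Lambda^{\ac}$, so an identity of elements of $\Frac\Lambda^{\ac}$ can be detected on them; once established it yields, in particular, the integrality of $\mathcal{L}_{N,p}^{\ac}(f)L_{K,p}^{\ac}$. The unit $u_K$ is extracted from the computation, and one checks at the end that its $f$-dependence is trivial.

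For the first factor I would unwind the defining interpolation property of $\cL_{N,p}$ (following \cite[\S6.3]{LLZ2}, using the $p$-depleted Eisenstein series of \cite[Def.~5.3.1]{LLZ1}): on the anticyclotomic line the relevant specialization of the CM Hida family $\g$ is the theta series $\theta_\varphi$ of an anticyclotomic Hecke character $\varphi$ of $K$ matched to $\psi$, and the formula expresses $\mathcal{L}_{N,p}^{\ac}(f)(\psi)$, up to explicit archimedean factors and $p$-adic modification factors (from $e_{\ord}$ and from the $p$-depletion), as a quotient $L(f\times\theta_\varphi,s_0)/\langle\g(\varphi),\g(\varphi)\rangle$ of a Rankin--Selberg $L$-value by a Petersson norm. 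By Artin formalism $L(f\times\theta_\varphi,s)=L(f_{/K}\otimes\varphi,s)$, so after absorbing the twist the numerator becomes the central value $L(f,\chi,1)$ featured in the BDP interpolation formula of Theorem~\ref{thm:BDP}, with $\chi$ the anticyclotomic character built from $\varphi$ and $\psi$; hence, up to the CM periods, $\Gamma$-factors and the $p$-Euler factor of Theorem~\ref{thm:BDP} and up to $\alpha(f)$ and $\sigma_{\mathcal{N}^+}^{-1}$, the numerator is proportional to $L_p^{\BDP}(f)(\psi)^2$.

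The denominator is precisely where $L_{K,p}^{\ac}$ enters. By Hida's adjoint-$L$-value formula, the Petersson norm of a CM form is essentially a special value of Katz's $p$-adic $L$-function for the anticyclotomic character $\varphi/\varphi^c$, up to a CM period and explicit constants; matching this character with $\psi$ --- this is the role of the map $\iota$, so that $L_{K,p}^{\ac}(\psi)$ is $L_{K,p}$ evaluated at the anticyclotomic Hecke character $\psi^2$ --- the factor $\langle\g(\varphi),\g(\varphi)\rangle$ is cancelled on the nose by $L_{K,p}^{\ac}(\psi)$. What remains is $L_p^{\BDP}(f)(\psi)^2$ times period ratios, $\Gamma$-factors and $p$-Euler factors; the periods $\Omega,\Omega_p,\Omega_k$ cancel up to $p$-adic units, and the surviving constant is $\alpha(f)\,N\,u_K\,\sigma_{\mathcal{N}^+}^{-1}$, where $\alpha(f)$ is the Petersson-norm ratio produced by the $p$-adic Waldspurger formula underlying Theorem~\ref{thm:BDP}, $\sigma_{\mathcal{N}^+}^{-1}$ comes from the twist $\psi(\mathfrak{N}^+)^{-1}$ there, $N$ from the level normalization in $\cL_{N,p}$, and $u_K\in(\Lambda^{\ac})^\times$ collects the remaining period and Euler-factor units and visibly depends only on $K$. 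Comparisons of this shape have been carried out in \cite{CR,castella13,castellawan1607}, and I would follow their template with the BDP normalization of \cite[\S4]{BCK21}.

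The main difficulty I anticipate is the normalization bookkeeping: one must match the CM periods, the $\Gamma$-factors, the powers of $2\pi$ and $\sqrt{|d_K|}$, and especially the $p$-adic modification factors --- Katz's $(1-\psi^2(\overline{\fp}))(1-p^{-1}\psi^2(\fp))$, BDP's $(1-a_p\psi(\overline{\fp})p^{-1}-\psi(\overline{\fp})^2p^{-1})^2$, and the ordinary-projection and $p$-depletion factors --- so that the surviving constant is genuinely independent of $\psi$, is a $p$-adic unit, and has precisely the asserted shape with the $f$-dependence confined to $\alpha(f)$ and $\sigma_{\mathcal{N}^+}^{-1}$. A second, milder point is that purely anticyclotomic characters sit at the boundary of Katz's interpolation range, so rather than naively evaluating Katz's interpolation formula on the anticyclotomic line one uses the Hida-type formula for the Petersson norm (or compares a two-variable identity before specializing along $\iota\circ\pi^{\ac}$), as in \cite{castellawan1607}. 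Finally, \ref{sq-free} and \ref{div} are used to reduce to $f$ of squarefree level, so that the local components at the primes dividing $N^-$ are Steinberg, the Jacquet--Langlands transform $f^{\mathrm{JL}}$ normalized as in \cite{prasanna2006} carries no extra local correction, and the $p$-adic Waldspurger formula applies cleanly; I would pin down the exact bad-prime computation that forces these hypotheses.
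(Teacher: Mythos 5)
Your proposal follows essentially the same route as the paper: the proof there simply invokes the comparison of interpolation formulae from \cite[Proposition~2.7]{castellawan1607} (the Rankin--Selberg times Katz versus BDP comparison you sketch in detail), and then concludes integrality from the fact that $L_p^{\BDP}(f)\in\Lambda^{\ac}$ together with $\alpha(f)\in\cO_v$. The only point worth making explicit, which you gesture at but do not pin down, is that the hypothesis \ref{div} guarantees via \cite[Theorem~2.4]{prasanna2006} that $\alpha(f)$ is a $p$-adic integer, which is exactly what turns the identity in $\Frac\Lambda^{\ac}$ into the asserted membership in $\Lambda^{\ac}$.
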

\begin{proof} Upon writing down the interpolation formulae alluded to in the proof of 
\cite[Proposition 2.7]{castellawan1607}, we deduce that $\mathcal{L}_{p,N}^\textup{ac}(f)L^\textup{ac}_{K,p}$ is equal to, up to a unit of $\Lambda^\ac$ of the form $Nu_K\sigma^{-1}_{\mathcal{N}^+}$ (where $u_K$ only depends on $K$), the $p$-adic $L$-function $\alpha(f)L_p^\BDP(f)^2$. By assumption \ref{div} and \cite[Theorem 2.4]{prasanna2006}, $\alpha(f)$ is a $p$-adic integer. As $L_p^\BDP(f)$  lies inside $\Lambda^\ac$, the theorem follows. 
\end{proof}
\begin{remark}
    If $f\in S_2(\Gamma_0(N),\Z)$ is associated to a rational elliptic curve, then $\alpha(f)$ is a $p$-adic unit whenever $p$ is non-Eisenstein and  the residual representation $\overline{\rho_{f}}\colon G_\Q\to GL_2(\cO_v/\varpi)$ attached to $f$ is ramified at each prime $l\mid N_i^-$; see \cite[page 912]{prasanna2006}.
\end{remark}

\begin{defn}
\label{def:fcheck}
     Let $f=\sum b_nq^n$ be a normalized newform of level $N'$ and $N$ an integer such that $N'\mid N$. Assume that $N'$ satisfies \ref{sq-free}. If $q\mid (N/N')$, we assume that $q^2\mid N$. Let $\check{f}=\sum a_n q^n$ be the eigenform of level $N$ with Fourier coefficients $a_q=0$ for $q\mid N/N'$ and $a_q=b_q$ for all $q$ that are coprime to $N/N'$. 
\end{defn}
While $\check f$ depends on $N$, we suppress it from the notation since it will be clear from the context what $N$ is. Recall that we decompose any Hecke character of $H_{p^\infty}$ as a product $\varphi\psi$, where $\varphi$ factors through $H_{\mathfrak{p^\infty}}$. We denote by $\g(\varphi)$ the specialization of the Hida family $\g$ at $\varphi$. Let $P_q(\g(\varphi),f,q^{-s})$ be the Euler factor of the tensor product of the representations associated to $\g(\varphi)$ and $f$. The following lemma follows directly from definition.
\begin{lemma}\label{behavior-split-primes}
    Let $\tilde{N}_3$ be a product of rational primes that split in $K/\Q$. Let $\varphi\psi$ be an anticyclotomic Hecke character of infinity type $(b,-b)$ with $b \equiv 0 \pmod {(p-1)}$ and let $\rho$ be the  character on $\Gamma^{ac}$ induced from $\varphi\psi$. Then 
    \[\prod_{q\mid \tilde{N}_3}\prod_{\mathfrak{q}\mid q}\mathcal{P}_{\mathfrak{q}}^f(\rho)=\prod_{q\mid \tilde{N}_3}P_q(\g(\varphi),f,q^{-(1+b)}).\]
\end{lemma}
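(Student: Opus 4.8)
The plan is to expand each side as a product, over the primes $\mathfrak{q}$ of $K$ lying above the split rational primes $q\mid\tilde N_3$, of local Euler factors of $f$ twisted by the relevant character, and then to check that these local factors agree.

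For the left-hand side, recall from Lemma~\ref{lemma:lambda-invariants} that $\mathcal{P}_\mathfrak{q}^{f}=P_\mathfrak{q}^{f}(N(\mathfrak{q})^{-1}\gamma_\mathfrak{q})$ with $P_\mathfrak{q}^{f}(X)=\det(1-X\Frob_\mathfrak{q}\mid(V_f)_{I_\mathfrak{q}})$ and $\gamma_\mathfrak{q}\in\Gamma^{\ac}$ the Frobenius at $\mathfrak{q}$. Since $q$ is split, $N(\mathfrak{q})=q$ for either prime above $q$, so evaluating at $\rho$ gives $\mathcal{P}_\mathfrak{q}^{f}(\rho)=P_\mathfrak{q}^{f}(q^{-1}\rho(\gamma_\mathfrak{q}))$. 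Since $\rho$ is induced from the anticyclotomic Hecke character $\varphi\psi$, which is unramified at $q$ (its conductor being trivial), class field theory gives $\rho(\gamma_\mathfrak{q})=(\varphi\psi)(\mathfrak{q})$, the value of $\varphi\psi$ on the ideal $\mathfrak{q}$. Hence the left-hand side equals $\prod_{q\mid\tilde N_3}\prod_{\mathfrak{q}\mid q}P_\mathfrak{q}^{f}(q^{-1}(\varphi\psi)(\mathfrak{q}))$.

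For the right-hand side, the specialization $\g(\varphi)$ is, by its $q$-expansion $\g(\varphi)=\sum_{(\fa,\fp)=1}\varphi(\fa)q^{N(\fa)}$, the theta series of a Hecke character $\Theta_\varphi$ of $K$ whose conductor divides a power of $\fp$ and which agrees with $\varphi$ on ideals prime to $\fp$; its Galois representation is $\mathrm{Ind}_{G_K}^{G_\QQ}\Theta_\varphi$. By Artin formalism, $V_{\g(\varphi)}\otimes V_f\cong\mathrm{Ind}_{G_K}^{G_\QQ}(\Theta_\varphi\otimes\mathrm{Res}_{G_K}V_f)$, so at a split prime $q=\mathfrak{q}\mathfrak{q}^{c}$ (where $K_\mathfrak{q}=K_{\mathfrak{q}^{c}}=\QQ_q$) the restriction to a decomposition group at $q$ is $\bigoplus_{\mathfrak{q}\mid q}(\Theta_\varphi\otimes V_f)|_{G_{K_\mathfrak{q}}}$. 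Therefore, by the definition of $P_q(\g(\varphi),f,q^{-s})$ as the Euler factor of this tensor product,
\[
P_q(\g(\varphi),f,q^{-s})=\prod_{\mathfrak{q}\mid q}\det\big(1-q^{-s}\Frob_\mathfrak{q}\mid(\Theta_\varphi\otimes V_f)_{I_\mathfrak{q}}\big)=\prod_{\mathfrak{q}\mid q}P_\mathfrak{q}^{f}\big(q^{-s}\Theta_\varphi(\Frob_\mathfrak{q})\big),
\]
the last equality because $\Theta_\varphi$ is unramified at $q\neq p$, so $I_\mathfrak{q}$ acts trivially on it and $\Frob_\mathfrak{q}$ acts by the scalar $\Theta_\varphi(\Frob_\mathfrak{q})$.

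It then remains to match the Euler variables, i.e.\ to check $q^{-(1+b)}\Theta_\varphi(\Frob_\mathfrak{q})=q^{-1}(\varphi\psi)(\mathfrak{q})$ for each $\mathfrak{q}\mid q$. Using that $\Theta_\varphi(\Frob_\mathfrak{q})=\varphi(\mathfrak{q})$, this reduces to $\psi(\mathfrak{q})=q^{-b}$, which follows by unwinding the decomposition $\varphi\psi$ of a character of $H_{p^\infty}$: the factor $\psi$ passes through the norm map $H_{p^\infty}\to\Zp^\times$, which carries $\Frob_\mathfrak{q}$ to $N(\mathfrak{q})=q$, while the condition that $\varphi\psi$ has infinity type $(b,-b)$ fixes the relevant exponent, producing exactly the shift that the substitution $s=1+b$ absorbs. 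I expect this final normalization bookkeeping — keeping straight the infinity type, the arithmetic-Frobenius convention of the introduction, and the definitions of $\g$ and of the norm map in \cite{LLZ2} — to be the only delicate point; granted it, taking the product over $q\mid\tilde N_3$ and over $\mathfrak{q}\mid q$ of the matching local factors gives the claimed identity.
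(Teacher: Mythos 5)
Your argument is correct and is exactly the definition-unwinding the paper has in mind: the authors give no proof at all, asserting only that the lemma ``follows directly from definition,'' and your factorization of the Rankin--Selberg Euler factor at a split prime via the induced-representation structure of the theta series $\g(\varphi)$, followed by matching the Euler variables, is that verification written out. The one thing you leave slightly open --- the normalization identity $\psi(\mathfrak{q})=q^{-b}$ reconciling the infinity type $(b,-b)$ with the substitution $s=1+b$ --- is precisely the bookkeeping the paper silently absorbs into ``by definition,'' so there is no gap relative to what the paper itself establishes.
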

The following theorem can be regarded as an analytic analogue of Lemma~\ref{lemma:lambda-invariants}.

\begin{theorem}
\label{BDP-l-function-for-eigenforms} 
Let $N'|N$ be positive integers that are coprime to $p\cdot d_K$. Assume that $N'$ satisfies \ref{sq-free}. If $q$ is a prime dividing $N/N'$,  we assume further that $q^2\mid N$.
Let $f=\sum b_qq^n\in S_2(\Gamma_0(N'),\cO_v)$ be a normalized newform such that $(f,N')$ satisfy the hypotheses \ref{GHH} and \ref{div}. Let $\check{f}$ be the eigenform of level $N$ defined in Definition \ref{def:fcheck}.
Then 

\begin{itemize}
    \item[(a)]$\mathcal{L}^\textup{ac}_{p,N}(\check{f})L^\textup{ac}_{K,p}\in \Lambda^\textup{ac}$.
\item[(b)]Let $\tilde{N}_1$ be the product of all primes $q$ that divide $N$ but not $N'$ and are inert in $K/\Q$. Let $\tilde{N}_2$ be the product of all common prime divisors of $N'$ and $N/N'$ that are inert in $K/\Q$. Let $\tilde{N}_3$ be the product of all $q\mid N/N'$ that are split in $K/\Q$. If
\begin{equation}
    \prod_{q\mid \tilde{N}_1}(1+q-b_q)(1+q+b_q)\prod_{q\mid \tilde{N}_2}(q-b_q)(q+b_q)\in\cO_v^\times,\tag{unit}\label{unit}
\end{equation} then 
\begin{align*}
    \mu(\mathcal{L}^\ac_{p,N}(\check{f})L^\textup{ac}_{K,p})&=\mu(\mathcal{L}^\ac_{p,N'}(f)L^\textup{ac}_{K,p}), \\
    \lambda(\mathcal{L}_{p,N}^\ac(\check{f})L^\textup{ac}_{K,p})&=\lambda(\mathcal{L}^\ac_{p,N'}(f)L^\textup{ac}_{K,p})+\sum_{\mathfrak{q}\mid\tilde{N}_3}\lambda(\mathcal{P}^f_\mathfrak{q}),
\end{align*}
where the sum runs over primes of $K$ dividing $\tilde{N}_3$ and $\mathcal{P}^f_\mathfrak{q}$ is defined as in Lemma~\ref{lemma:lambda-invariants}.
\end{itemize}
\end{theorem}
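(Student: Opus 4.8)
The plan is to work directly with the map $\cL_{N,p}$ and track what happens when one replaces the newform $f$ of level $N'$ by the (non-newform) eigenform $\check f$ of level $N$. The starting point is the interpolation formula for $\cL_{N,p}(h)$ in terms of Rankin--Selberg $L$-values: at an anticyclotomic character $\varphi\psi$ of infinity type $(b,-b)$ with $b\equiv 0\pmod{p-1}$, the value $\cL_{N,p}(\check f)(\varphi\psi)$ differs from $\cL_{N',p}(f)(\varphi\psi)$ only in the Euler factors at primes $q\mid N/N'$, because the $\Xi$-construction, Hida's ordinary projector, and the Petersson pairing against $\g(\varphi)$ all depend on $\check f$ only through its Hecke eigenvalues, which agree with those of $f$ away from $N/N'$. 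Concretely, by the theory of the imprimitive Rankin--Selberg convolution (cf.\ the discussion at the end of \cite[\S6.3]{LLZ2}), removing the $q$-Euler factor of $f$ for $q\mid N/N'$ multiplies the two-variable $p$-adic $L$-function by $P_q(\g(\varphi),f,q^{-(1+b)})$ up to a unit coming from the level change and normalization of the Petersson products. First I would make this comparison precise, obtaining an identity of the shape
\[
\cL^\ac_{p,N}(\check f)\,L^\ac_{K,p}
= v\cdot\Bigl(\prod_{q\mid N/N'}\widehat P_q\Bigr)\cdot\cL^\ac_{p,N'}(f)\,L^\ac_{K,p}
\]
in $\Frac\Lambda^\ac$, where $v\in(\Lambda^\ac)^\times$ and $\widehat P_q\in\Lambda^\ac$ is the image of the relevant Euler factor.

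Next I would split the primes $q\mid N/N'$ into the three classes $\tilde N_1$ (new inert primes, where $f$ has good reduction so the Euler factor is $(1-b_qq^{-s}+q^{1-2s})$), $\tilde N_2$ (common inert prime divisors, where $a_q(f)=0$ in $\check f$ so the relevant factor is $(1-q^{1-2s})$ type), and $\tilde N_3$ (split primes). For $q$ inert, specializing at an anticyclotomic character kills the Galois action on the "anticyclotomic" variable in the sense made explicit in the first part of \S4: since $q$ is inert, $\psi(\mathfrak q)=1$ for anticyclotomic $\psi$ (this is the computation displayed in the review subsection), so $\widehat P_q$ becomes a \emph{constant} in $R$, namely (up to sign and power of $q$) $(1+q-b_q)(1+q+b_q)$ for $q\mid\tilde N_1$ and $(q-b_q)(q+b_q)$ for $q\mid\tilde N_2$. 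Under hypothesis \eqref{unit} the product of these constants is a unit in $\cO_v$, hence in $R$, so these factors contribute nothing to either Iwasawa invariant. For $q\mid\tilde N_3$ split, Lemma~\ref{behavior-split-primes} identifies $\prod_{\mathfrak q\mid q}\cP^f_{\mathfrak q}(\rho)$ with $P_q(\g(\varphi),f,q^{-(1+b)})$, so $\widehat P_q$ is (a unit times) $\prod_{\mathfrak q\mid q}\cP^f_{\mathfrak q}$; these are genuine elements of $\Lambda^\ac$, not units, and they contribute exactly $\sum_{\mathfrak q\mid\tilde N_3}\lambda(\cP^f_{\mathfrak q})$ to the $\lambda$-invariant and (by Remark~\ref{rk:local-terms}, whose proof applies verbatim) zero to the $\mu$-invariant. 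Part (a) then follows because $\cL^\ac_{p,N'}(f)L^\ac_{K,p}\in\Lambda^\ac$ by Theorem~\ref{thm:integrality-initial-case} (applied to the newform $f$ of level $N'$, which satisfies \ref{div} and \ref{sq-free}) and each $\widehat P_q$ lies in $\Lambda^\ac$; part (b) follows from additivity of $\mu$ and $\lambda$ in the factorization above.

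The main obstacle I expect is the precise bookkeeping in the first step: pinning down the exact Euler-factor discrepancy between $\cL_{N,p}(\check f)$ and $\cL_{N',p}(f)$, including the level-changing unit $v$ and the normalization of the Petersson products at levels $N\vert d_K\vert$ versus $N'\vert d_K\vert$ and the interplay with Hida's ordinary projector. One must check that raising the level from $N'$ to $N$ (with $q^2\mid N$ for $q\mid N/N'$, so that $\check f$ is $q$-depleted rather than $q$-ordinary at those primes) introduces only a unit in $\Lambda^\ac$ and the claimed Euler factors, with no stray $p$-power or non-unit polynomial creeping in — this is where hypothesis \eqref{unit} and the square condition $q^2\mid N$ are genuinely used. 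Once the comparison identity is established cleanly, the passage to $\mu$- and $\lambda$-invariants is formal, using only the additivity in Definition~\ref{def:invariants} and the constancy/unit statements for the inert Euler factors.
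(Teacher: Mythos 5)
Your proposal is correct and follows essentially the same route as the paper: compare the imprimitive Rankin--Selberg convolutions of $\g(\varphi)$ with $f$ and $\check f$ Euler factor by Euler factor at the primes dividing $N/N'$, observe that the inert-prime factors are constants (equal, up to powers of $q$, to the quantities in \eqref{unit}, hence units under that hypothesis) because anticyclotomic characters are trivial on inert primes, identify the split-prime factors with $\prod_{\mathfrak q\mid q}\mathcal{P}^f_{\mathfrak q}$ via Lemma~\ref{behavior-split-primes}, and deduce (a) from Theorem~\ref{thm:integrality-initial-case}. The ``bookkeeping'' you defer is carried out in the paper exactly as you anticipate, using \cite[Theorem 4.2.3 and Proposition 5.4.2]{LLZ1} to reduce the comparison to the explicit finite Dirichlet series $C(\g(\varphi),h,M,s)$, with a three-case analysis according to whether $q\nmid N'$, $q\,\|\,N'$, or $q^2\mid N'$.
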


\begin{proof}
 Let $\varphi\psi$ be an anticyclotomic Hecke character of trivial conductor and infinity type $(b,-b)$ with $b\equiv 0 \pmod {(p-1)}$. Let $h\in\{f,\check{f}\}$. Let $M=N'\vert d_k\vert$ if $h=f$ and $M=N\vert d_k\vert$ if $h=\check{f}$. 
Let $\mathcal{D}({\g}(\varphi),h, 1/M,s)$ be the integral defined in \cite[\S4.2, P.55]{LLZ1}. Here, $s$ is a complex variable. 
It follows from \cite[Proposition 5.4.2]{LLZ1} that
\begin{equation}
\label{interpolation-llz}
    \mathcal{L}_{p,M}(h)(\varphi\psi)=(*) \mathcal{D}({\g}(\varphi),h, 1/M,1+b),
\end{equation}
where $(*)$ is a factor only depending on $f$ but not on $h$ nor  $M$.

We now compare $\mathcal{D}({\g}(\varphi),\check{f}, 1/N\vert d_k\vert,1+b)$ and $\mathcal{D}({\g}(\varphi),f, 1/N'\vert d_k\vert,1+b)$. Let 
\[C({\g}(\varphi),h,M,s)=\prod_{q\mid M}P_q(\g(\varphi),h,q^{-s})\sum_{S(M)}a_nb_nn^{-s},\]
where $S(M)$ is the set of integers whose  prime factors divide $M$, and $a_n$ and $b_n$ are the Fourier coefficients of ${\g}(\varphi)$ and $h$, respectively. We recall from \cite[Theorem 4.2.3]{LLZ1} that $C({\g}(\varphi),h,M,s)$ is a polynomial in $q^{-s}$ for $q\mid M$. In particular, it is an entire function in $s$.

Thanks to \cite[Theorem 4.2.3]{LLZ1}, $\mathcal{D}({\g}(\varphi),h, 1/M,s)$ and $C({\g}(\varphi),h,M,s)$ differ only by a $p$-adic unit and the completed $L$-function $\Lambda({\g}(\varphi),f,s)$, which is independent of the choice of $h$ (see \cite[Proposition 4.1.5]{LLZ1} for the precise definition).   In the remainder of the proof, we prove that up to a $p$-adic integer, $C({\g}(\varphi),\check{f},N\vert d_k\vert,s)$ and $C({\g}(\varphi),f,N'\vert d_k\vert,s)$ differ by $\prod_{q|\tilde{N}_3} P_q(\g(\varphi),f,q^{-s})$. It would  then follow from  Lemma \ref{behavior-split-primes} and \eqref{interpolation-llz} that $\mathcal{L}_{p,N}^\ac(\check{f})L^\textup{ac}_{K,p}$ and $\mathcal{L}^\ac_{p,N'}(f)L^\textup{ac}_{K,p}\prod_{\mathfrak{q}\mid\tilde{N}_3}\mathcal{P}^f_\mathfrak{q}$ have the same Iwasawa invariants. This would imply the assertions of the theorem since $\mathcal{L}_{p,N'}^\textup{ac}(f)L^\textup{ac}_{K,p}\in\Lambda^\ac$ by Theorem~\ref{thm:integrality-initial-case} and $\mathcal{P}_\mathfrak{q}^f\in\Lambda^\ac_{\cO_v}$ with $\mu(\mathcal{P}_\mathfrak{q}^f)=0$ for all $\mathfrak{q}|\tilde N_3$.

Without loss of generality, we may assume that $N/N'=q^l$ for some prime $q$. We will distinguish three cases:
 \begin{itemize}
     \item[(1)] $q\not\mid N'$
     \item[(2)] $q\mid N'$ but $q^2\not \mid N'$
     \item[(3)] $q^2\mid N'$
 \end{itemize}

 In  case (3), $\check{f}=f$ and $\mathcal{P}^f_\mathfrak{q}$ is a unit.

We consider case (1). By the definition of $\check{f}$, we have $b_q=0$. Thus,  
\[\sum_{S(N\vert d_k\vert)}a_nb_nn^{-s}=\sum_{S(N'\vert d_k\vert)}a_nb_nn^{-s},\]
which implies that
\[C({\g}(\varphi),\check{f},N\vert d_k\vert,s)=P_q(\g(\varphi),f,q^{-s})C({\g}(\varphi),f,N'\vert d_k\vert,s).\]

 If $q$ is inert in $K$, then $$P_q(\g(\varphi),f,q^{-(1+b)})=(1-b_q q^{-1}+q^{-1})(1+b_qq^{-1}+q^{-1})$$ does not depend on $\varphi$ (see \cite[Proposition 4.1.2]{LLZ1} for details). In particular, it does not depend on $\varphi\psi$ and is a $p$-adic integer. 
  
 It remains to consider case (2). In this case, $$\prod_{q\mid N}P_q(\g(\varphi),f,q^{-s})=\prod_{q\mid N'}P_q(\g(\varphi),f,q^{-s})$$ and only the sum $\sum_{S(M)}a_nb_nn^{-s}$ depends on $M$ (since the Fourier coefficient at $q$ differs). Decomposing $\sum_{S(M)}a_nb_nn^{-s}$ into a finite Euler product, we see that only  an Euler factor at $q$ is altered when passing from $f$ to $\check{f}$. 
 
 If $q$ is inert in $K$, this factor again does not depend on $\varphi\psi$. If $q$ splits in $K$, we can proceed as in case (1).
\end{proof}
\begin{remark}
    Keeping track of the units in the proof of Theorem~\ref{BDP-l-function-for-eigenforms}  yields 
    \begin{align*}L^\ac_{K,p}\mathcal{L}^\ac_{p,N}(\check{f})=(N/N') \prod_{q\mid \tilde{N}_1}(1+q^{-1}-b_qq^{-1})(1+q^{-1}+b_qq^{-1})\\
    \prod_{q\mid \tilde{N}_2}(1+b_qq^{-1})(1-b_qq^{-1}) \prod_{\mathfrak{q}\mid \tilde{N}_3}\mathcal{P}^{f}_{\mathfrak{q}}\times L^\ac_{K,p}\mathcal{L}^\ac_{p,N} (f)\end{align*}
\end{remark}
 As an immediate consequence, we deduce:
 \begin{corollary}
 \label{integrality-up-to-p} Let $h\in S_2(\Gamma_0(N),\cO_v)$ be an $L_v$-linear combination of Hecke eigenforms which satisfy the hypotheses of Theorem~\ref{BDP-l-function-for-eigenforms}.   Then,
 \[L^\textup{ac}_{K,p}\mathcal{L}_{p,N}^\textup{ac}(h)\in \Lambda^{\ac} \otimes \Q_p.\]
 \end{corollary}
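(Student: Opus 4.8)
The plan is to reduce the statement to the eigenform case already handled by Theorem~\ref{BDP-l-function-for-eigenforms} by exploiting the linearity of $\mathcal{L}_{p,N}$ recorded in Remark~\ref{rem:linearity}. First I would write $h = \sum_j c_j g_j$ as an $L_v$-linear combination of normalized Hecke eigenforms $g_j \in S_2(\Gamma_0(N),\cO_v)$, where each $g_j$ is of the form $\check{g}_j^\flat$ for a newform $g_j^\flat$ of level $N_j' \mid N$ satisfying \ref{GHH}, \ref{div} and \ref{sq-free} (this is exactly the shape of the hypotheses invoked in the statement). By Remark~\ref{rem:linearity}, $\mathcal{L}_{p,N}(h) = \sum_j c_j\,\mathcal{L}_{p,N}(g_j)$ inside $\Frac\Lambda$, and since $\pi^{\ac}$ and $\iota$ are ring homomorphisms, multiplying by $L^{\ac}_{K,p}$ and pushing to $\Lambda^{\ac}$ gives
\[
L^{\ac}_{K,p}\,\mathcal{L}^{\ac}_{p,N}(h) = \sum_j c_j\, L^{\ac}_{K,p}\,\mathcal{L}^{\ac}_{p,N}(g_j).
\]

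Next I would apply Theorem~\ref{BDP-l-function-for-eigenforms}(a) to each summand: for each $j$, since $g_j = \check{g_j^\flat}$ with $g_j^\flat$ a newform of level $N_j'$ meeting the required hypotheses, part (a) of that theorem yields $L^{\ac}_{K,p}\,\mathcal{L}^{\ac}_{p,N}(g_j) \in \Lambda^{\ac}$. Therefore the right-hand side of the displayed identity is an $L_v$-linear combination of elements of $\Lambda^{\ac}$, hence lies in $\Lambda^{\ac}\otimes L_v$. Since $L_v$ is a finite extension of $\Q_p$ and $\Lambda^{\ac}\otimes_R \mathcal{O}_v \otimes_{\mathcal{O}_v} L_v = \Lambda^{\ac}_{\mathcal{O}_v}\otimes \Q_p$ is the natural home, one gets $L^{\ac}_{K,p}\,\mathcal{L}^{\ac}_{p,N}(h) \in \Lambda^{\ac}\otimes\Q_p$ as claimed (absorbing the finitely many scalars $c_j \in L_v$).

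The only point requiring a little care — and the step I expect to be the mildest obstacle — is justifying that every $h$ as in the statement does decompose into eigenforms each of which is of the form $\check{g}$ for a newform $g$ satisfying the hypotheses of Theorem~\ref{BDP-l-function-for-eigenforms}. This is essentially the content of the phrase ``which satisfy the hypotheses of Theorem~\ref{BDP-l-function-for-eigenforms}'' in the statement: the hypotheses there are phrased for the pair $(f,N')$ with $f=\check{f}$ of level $N$, so an eigenform $g_j \in S_2(\Gamma_0(N),\cO_v)$ satisfying them is by definition of the form $\check{g_j^\flat}$ for a newform $g_j^\flat$ of some level $N_j' \mid N$ with $q^2 \mid N$ whenever $q \mid N/N_j'$, and with $(g_j^\flat, N_j')$ satisfying \ref{GHH}, \ref{div}, \ref{sq-free}. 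Thus the reduction is immediate and the corollary follows at once from Theorem~\ref{BDP-l-function-for-eigenforms}(a) together with Remark~\ref{rem:linearity}.
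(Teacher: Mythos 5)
Your proposal is correct and matches the paper's intent exactly: the paper presents this corollary as an immediate consequence of Theorem~\ref{BDP-l-function-for-eigenforms}(a), obtained precisely by decomposing $h$ into eigenforms of the form $\check{g}$, invoking the linearity of $\mathcal{L}_{p,N}$ from Remark~\ref{rem:linearity}, and absorbing the $L_v$-coefficients into the tensor factor $\Q_p$. No further comparison is needed.
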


\subsection{Integrality and anticyclotomic specialization}
The goal of this subsection is to prove an integral version of Corollary~\ref{integrality-up-to-p}. Looking at the interpolation formula for $\mathcal{L}_{p,N}(h)$ one would suspect that the denominator of $\mathcal{L}_{N,p}(h)$ is cleared by an ideal interpolating the values $\langle \g(\varphi),\g(\varphi)\rangle_{ \vert d_K\vert}$, where $\varphi$ is a Hecke character as above. It turns out that this is true  in view of the following result on congruence ideals due to Hida and Tilouine, which allows us to study the denominator of $\cL_{p,N}(h)$. 
\begin{proposition}
\label{integrality-congruence-ideal}
Let $H_\g$ be an annihilator in the congruence ideal associated to the Hida family $\g$ as defined in \cite{HT94}.  For any $h\in S_2(\Gamma_0(N), \cO_v)$, we have
\[H_\g\mathcal{L}_{p,N}(h)\in \Lambda_p\] 

\end{proposition}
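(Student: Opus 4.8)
The plan is to reduce the statement to the interpolation formula for $\mathcal{L}_{p,N}(h)$ and the defining property of the congruence ideal of $\g$. First I would recall, from \cite[\S6.3]{LLZ2} and \cite[Proposition 5.4.2]{LLZ1}, that for an anticyclotomic (or, more generally, suitably ramified) Hecke character $\varphi\psi$ the value $\mathcal{L}_{p,N}(h)(\varphi\psi)$ equals
\[
\frac{\langle \g(\varphi)^*,\Xi_{1/N|d_K|}(h,\varphi\psi)^{\ord,p}\rangle_{N|d_K|}}{\langle \g(\varphi),\g(\varphi)\rangle_{|d_K|}}.
\]
The numerator here is a Petersson product of the CM form $\g(\varphi)$ against a $p$-stabilized ordinary projection of a product of $h$ with a $p$-depleted Eisenstein series; since $h\in S_2(\Gamma_0(N),\cO_v)$ and the Eisenstein series, Hida's ordinary projector, and the relevant Hecke operators are all defined integrally, the numerator lies in $R$ (or in the relevant Iwasawa algebra $\Lambda_p$ after varying $\varphi$). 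So the only source of denominators in $\mathcal{L}_{p,N}(h)$ is the factor $\langle \g(\varphi),\g(\varphi)\rangle_{|d_K|}$ in the denominator, varying over $\varphi$.

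The second ingredient is the Hida--Tilouine theory of congruence ideals \cite{HT94}: the element $H_\g$ is, by construction, an annihilator of the congruence module of $\g$, and it has the key property that $H_\g$ kills the denominator coming from the self-Petersson pairing of the CM family. More precisely, the congruence ideal is designed so that multiplication by $H_\g$ clears exactly the factor $\langle \g(\varphi),\g(\varphi)\rangle_{|d_K|}^{-1}$ after $\varphi$ is allowed to vary $p$-adically over $\mathrm{Spec}\,\Zp[[H_{\fp^\infty}]]$. Concretely, I would interpolate the identity above over all such $\varphi$ to realize $\mathcal{L}_{p,N}(h)$ as an element of $\Frac\Lambda_p$ whose "numerator part" is a genuine element of $\Lambda_p$ and whose denominator divides the congruence ideal of $\g$; multiplying by $H_\g$ then lands in $\Lambda_p$. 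The passage from pointwise identities at infinitely many $\varphi$ to an identity of Iwasawa-algebra elements is justified by the Zariski density of these characters in $\mathrm{Spec}\,\Zp[[H_{\fp^\infty}]]$ together with the fact that $\mathcal{L}_{p,N}(h)$ is, a priori, a well-defined element of $\Frac\Lambda_p$ by its definition.

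I expect the main obstacle to be making precise the assertion that $H_\g$ annihilates the denominator uniformly in the Hida family, i.e. matching up the abstract congruence ideal of \cite{HT94} with the concrete factor $\langle \g(\varphi),\g(\varphi)\rangle_{|d_K|}$ appearing in the interpolation formula, and checking that the numerator $\langle \g(\varphi)^*,\Xi_{1/N|d_K|}(h,\varphi\psi)^{\ord,p}\rangle$ interpolates to an honest element of $\Lambda_p$ rather than merely to something in $\Frac\Lambda_p$. This is essentially the content of the fact that $\g$ is a Hida family: the relevant Eisenstein measure and the ordinary projection are integral, and the only normalization defect is the self-pairing, which is exactly what the congruence ideal measures. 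Once these two points are in place, the proposition follows by combining the interpolation formula with the congruence-ideal property and invoking density of the interpolation points.
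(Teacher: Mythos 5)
Your overall strategy --- isolate $\langle\g(\varphi),\g(\varphi)\rangle_{|d_K|}$ as the sole source of denominators and let the congruence ideal clear it --- is the same as the paper's, but the step you defer as ``the main obstacle'' is precisely the content of the proof, and the route you sketch for it does not work as stated. The numerator $\langle \g(\varphi)^*,\Xi_{1/N|d_K|}(h,\varphi\psi)^{\ord,p}\rangle_{N|d_K|}$ is a complex Petersson product, a transcendental number for each $\varphi$; it does not ``lie in $R$'' and does not interpolate on its own to an element of $\Lambda_p$. Only the ratio is $p$-adically meaningful, so there is no a priori decomposition of $\mathcal{L}_{p,N}(h)$ into an integral numerator over a denominator dividing the congruence ideal, and Zariski density of the interpolation points cannot be invoked until such a decomposition has been produced by other means.

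The paper supplies the missing mechanism via Hida's theory: the ratio equals $a\bigl(1,(e_{\ord}h(\varphi\psi))\big|1_{\g(\varphi)}\bigr)$, the first Fourier coefficient of the projection of $e_{\ord}h(\varphi\psi)$ onto the $\g(\varphi)$-eigencomponent by the idempotent $1_{\g(\varphi)}$ --- a purely algebraic quantity. These specializations are the values of Hida's $\Lambda_p$-linear functional $\ell_\g$, and the assertion that $H_\g\ell_\g$ takes values in $\Lambda_p$ is exactly \cite[Proposition 7.8]{hida88}: the congruence ideal controls the denominator of the idempotent $1_\g$ in the $\Lambda_p$-adic Hecke algebra, rather than ``clearing the Petersson norm'' directly. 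Two further points you omit are also needed: one must precompose with $e_{\ord}$ to extend $\ell_\g$ from ordinary $\Lambda_p$-adic forms to all of $S_2(\Gamma_0(N),\cO_v)$ (using that $e_{\ord}$ induces an injection of Hecke algebras), and one must apply the trace operator from level $N|d_K|$ down to level $|d_K|$ to reconcile the level of the numerator with that of the denominator, via the identity $\langle\g^*(\varphi),\mathrm{Tr}(f')\rangle_{|d_K|}=\langle\g^*(\varphi),f'\rangle_{N|d_K|}$.
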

\begin{proof}
Our strategy is similar to the one employed in \cite[Sections 4-5]{collins}. Let $\varphi\psi$ be a Hecke character of trivial conductor and infinity type $(-l_2,l_1)$ with $l_2-l_1\ge 1$. Then $\g(\varphi)$ is a newform of level $\vert d_K\vert$ and weight $l_2-l_1+1$. Analogously to \cite[Section 7]{hida88}, we define the $\Lambda_p$-linear map 
\[
\ell_\g:S(\vert d_K\vert, \Lambda_p)\rightarrow \Frac \Lambda_p
\]
whose  specialization at $\varphi\psi$ is given by
\[\ell_{\g(\varphi)}(h(\varphi\psi))=a(1,(e_{\ord} h(\varphi\psi))\big|1_{\g(\varphi)}),\]
where $h(\varphi\psi)$ is a form of level $\vert d_K\vert $ and weight $l_2-l_1+1$, $1_{\g(\varphi)}$ is the projector induced by $\g(\varphi)$ defined in \cite{hida85} and $a(1,f)$ denotes the first Fourier coefficient of $f$. It follows from \cite[Proposition 7.8]{hida88} that $H_{\g}\ell_{\g}$ takes values in $\Lambda_p$. Note that Hida defines the function $\ell_{\g}$ only on the space of ordinary $\Lambda_p$-adic modular forms. But the ordinary projector $e_{\ord}$ induces an injection of Hecke algebras $T^{\ord}\to T$. Thus, we can extend $\ell_\g$ to the space of all modular forms by applying $e_{\ord}$ to  $h(\varphi\psi)$.

Recall that $\g(\varphi)\big|1_{\g(\varphi)}=\g(\varphi)$.  If $g'$ is an eigenform that is not a multiple of $\g(\varphi)$, then $g'\big| 1_{\g(\varphi)}=0$. It follows that 
\[\ell_{\g(\varphi)}(h(\varphi\psi))=\frac{\langle \g(\varphi)^*,e_{\ord}(h(\varphi\psi))\rangle_{\vert d_K\vert}}{\langle \g(\varphi),\g(\varphi)\rangle_{\vert d_K\vert}}.\]

We  extend this linear functional to forms of level $N|d_K|$ as follows. Consider the trace operator
\begin{align*}
    \textup{Tr}\colon S(\Gamma_1(N\vert d_K\vert),\Lambda_p)&\to \ S(\Gamma_1(\vert d_K\vert,\Lambda_p),\\
    f_k&\mapsto \sum_\gamma f\vert_k \gamma,
\end{align*}
 where $f_k$ is the weight $k$ specialization of a Hida family $\mathbf{f}$, $\vert_k \gamma$ denotes the slash action for modular forms and the sum runs over a set of representatives for $\big(\Gamma_1(\vert d_K\vert)\cap \Gamma_0(p)\big)/\big(\Gamma_1(\vert d_K\vert N)\cap \Gamma_0(p)\big)$.
Then 
\begin{align*}
    &\langle \g^*(\varphi),\textup{Tr}(f')\rangle_{\vert d_K\vert}\\
    =\ &\frac{1}{[\Gamma_1(\vert d_K\vert)\cap \Gamma_0(p):\Gamma_1(N\vert d_K\vert) \cap \Gamma_0(p)]}\langle \g^*(\varphi),\textup{Tr}(f')\rangle_{N\vert d_K\vert}\\
    =\ &\langle \g^*(\varphi),f'\rangle_{N\vert d_K\vert}.
\end{align*}

Let $\ell'_{\g}=\ell_{\g}\circ \textup{Tr}$. By definition $H_{\g}\ell'_{\g}$ is integral and its specialization at $\varphi\psi$ is given by
\[\ell'_{\g(\varphi)}(h(\varphi\psi))=\frac{\langle \g(\varphi)^*,e_{\ord}(h(\varphi\psi))\rangle_{N\vert d_K\vert}}{\langle \g(\varphi),\g(\varphi)\rangle_{\vert d_K\vert}},\]
which concludes the proof.
\end{proof}

We review the relation between the congruence ideal $H_\g$ and the anticyclotomic specialization of $L_{K,p}$.

Let $M(\mathcal{K})/\mathcal{K}$ (resp, $M(K^\textup{a}_\infty)/K^\textup{a}_\infty)$) be the maximal abelian pro-$p$ extension unramified outside $\mathfrak{p}$. Then $X(\mathcal{K}):=\Gal(M(\mathcal{K})/\mathcal{K})$ (resp. $X(K^{\textup{a}}_\infty):=\Gal(M(K^\textup{a}_\infty)/K^\textup{a}_\infty)$) is a finitely generated $\Lambda$-torsion (resp. $\Lambda^\textup{a}$-torsion) module.

 Recall from \S\ref{prelim} that $\Delta$ denotes the torsion subgroup of $H_{p^\infty}$. 
 Let $\chi$ be a character of $\Delta$. As $\vert \Delta \vert $ is coprime to $p$, each $\Lambda_p$-module $M$ decomposes into $M=\oplus M_\chi$, where $M_\chi$ is a $\Lambda_p$-module on which $\Delta$ acts via $\chi$. In particular, $\Lambda_\chi$ is the maximal subalgebra of $\Lambda_p$ on which $\Delta$ acts via $\chi$. Let $F_{\chi}$ be a generator of the characteristic ideal of $X(\mathcal{K})_{\chi}$ as a $\Lambda_\chi$-module. 
 
 The following result on the Iwasawa main conjecture over $K$ is due to Rubin.

\begin{theorem}[Rubin]\label{rubin-main-conjecture}
As ideals in $\Lambda_\chi$, we have the equality
\[(F_\chi)=(L_{K,p,\chi}),\]
where $L_{K,p,\chi}$ denotes the $\chi$-isotypic component of $L_{K,p}$.
\end{theorem}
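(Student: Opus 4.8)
The plan is to recall the structure of Rubin's proof of the two-variable main conjecture for imaginary quadratic fields, the present statement being the case of the $\Zp^2$-tower $\mathcal{K}/K$; in the paper itself this is simply a citation to Rubin. Fix the character $\chi$ of $\Delta$ and work in the $\chi$-isotypic component throughout. Write $\mathcal{U}_\infty$ for the projective limit over the finite layers of $\mathcal{K}/K$ of the groups of principal local units at $\mathfrak{p}$, $\mathcal{E}_\infty$ for the projective limit of the global units, $\mathcal{C}_\infty$ for the projective limit of the closures of the subgroups generated by elliptic units, and $\mathcal{A}_\infty$ for the projective limit of the $p$-parts of the ideal class groups. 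Global class field theory, applied to the maximal abelian pro-$p$ extension of $\mathcal{K}$ unramified outside $\mathfrak{p}$, produces an exact sequence of finitely generated $\Lambda$-torsion modules
\[
0 \longrightarrow \mathcal{E}_\infty/\overline{\mathcal{C}_\infty} \longrightarrow \mathcal{U}_\infty/\overline{\mathcal{C}_\infty} \longrightarrow X(\mathcal{K}) \longrightarrow \mathcal{A}_\infty \longrightarrow 0,
\]
with $X(\mathcal{K})$ as in the statement. Taking $\chi$-components and using multiplicativity of characteristic ideals gives
\[
(F_\chi) \;=\; \Char_{\Lambda_\chi}\!\big((\mathcal{U}_\infty/\overline{\mathcal{C}_\infty})_\chi\big)\cdot \Char_{\Lambda_\chi}\!\big((\mathcal{A}_\infty)_\chi\big)\cdot \Char_{\Lambda_\chi}\!\big((\mathcal{E}_\infty/\overline{\mathcal{C}_\infty})_\chi\big)^{-1}.
\]

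The first input is Katz's computation of the local unit index: via Coleman power series and the Coates--Wiles logarithmic derivative, the norm-compatible system of elliptic units is carried to the Katz $p$-adic $L$-function, and one obtains $\Char_{\Lambda_\chi}((\mathcal{U}_\infty/\overline{\mathcal{C}_\infty})_\chi) = (L_{K,p,\chi})$; this is essentially a defining property of $L_{K,p}$. Substituting into the displayed identity, the asserted equality $(F_\chi) = (L_{K,p,\chi})$ becomes equivalent to
\[
\Char_{\Lambda_\chi}\!\big((\mathcal{A}_\infty)_\chi\big) \;=\; \Char_{\Lambda_\chi}\!\big((\mathcal{E}_\infty/\overline{\mathcal{C}_\infty})_\chi\big),
\]
the classical form of the main conjecture comparing ideal class groups with elliptic units.

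The second input is the Euler system of elliptic units. The elliptic units of conductor $\mathfrak{p}^n$ times a squarefree product of primes split in $K/\Q$ satisfy the Euler-system (distribution) relations, and Kolyvagin's derivative construction, in the form developed by Rubin, produces, prime-by-prime in $\Lambda_\chi$, enough annihilators of $(\mathcal{A}_\infty)_\chi$ to yield the divisibility $\Char_{\Lambda_\chi}((\mathcal{A}_\infty)_\chi) \mid \Char_{\Lambda_\chi}((\mathcal{E}_\infty/\overline{\mathcal{C}_\infty})_\chi)$. This uses the nontriviality of elliptic units (ultimately the nonvanishing of complex $L$-values of Hecke characters) together with the vanishing of the $\mu$-invariant due to Gillard. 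The reverse divisibility is then forced, as in Rubin's argument, by the analytic class number formula (Kronecker's second limit formula and its $p$-adic interpolation) combined with $\mu = 0$: at every finite-order specialization the two characteristic ideals have the same value, so a proper divisibility is impossible and equality holds. Reassembling the two inputs gives $(F_\chi) = (L_{K,p,\chi})$.

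The main obstacle is the Euler system step: verifying the norm-compatibility relations of elliptic units across the two-variable tower, ensuring the Euler system is nonzero in every $\chi$-component, and carrying out Kolyvagin's descent with Chebotarev choices of auxiliary split primes compatible with the $\Delta$-action; the input $\mu = 0$ is precisely what upgrades the resulting divisibility of characteristic ideals to the stated equality. For the details we refer to Rubin's work.
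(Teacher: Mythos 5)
The paper offers no argument for this statement beyond the citation to Rubin (Theorem 4.1(i) combined with the remark after Theorem 12.2 of that paper), and your outline is a faithful summary of precisely that proof: the class-field-theoretic four-term exact sequence, the Coleman/Katz identification $\Char_{\Lambda_\chi}\bigl((\mathcal{U}_\infty/\overline{\mathcal{C}}_\infty)_\chi\bigr)=(L_{K,p,\chi})$, the Euler-system divisibility $\Char_{\Lambda_\chi}\bigl((\mathcal{A}_\infty)_\chi\bigr)\mid\Char_{\Lambda_\chi}\bigl((\mathcal{E}_\infty/\overline{\mathcal{C}}_\infty)_\chi\bigr)$, and the class-number-formula argument upgrading this to an equality. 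So the approaches coincide; the only caveat worth recording is that in the two-variable setting the final equality step is carried out by descending to the one-variable $\Zp$-quotients of $\mathcal{K}/K$ and invoking Iwasawa's theorem together with the analytic class number formula there (with the $\mu=0$ input of Gillard entering at that stage), rather than by literally comparing values at finite-order specializations of the two-variable characteristic power series.
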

\begin{proof}
This follows from \cite[Theorem 4.1(i)]{rubin91} combined with the remark after Theorem~12.2 of op. cit. 
\end{proof}
Similarly, every $\Lambda^\textup{a}$-module $M$ decomposes as $M=\oplus M_\chi$, where the direct sum runs over all characters of $\Delta$ that factor through $\Gal(K^\textup{a}_\infty/K)$. For any such $\chi$, we denote by $f_\chi$ a generator of  the characteristic ideal of $X(K^\textup{a}_\infty)_\chi$ as a $\Lambda_\chi^\textup{a}$-module. 
The following theorem is due to Hida and Tilouine.

\begin{theorem}\cite[Theorem 0.3]{HT94} \label{theorem:main-conj-congruence-number}Let $\chi$ be a character of $\Delta$ factoring through $\Gal(K^a_\infty/K)$ and let $H_{\g,\chi}$ be the $\chi$-isotypic component of $H_{\g}$. Then as ideals in $\Lambda_\chi^\textup{a}$,
\[H_{\g,\chi} \mid h(K) \mathcal{V}(f_\chi),\]
where $h(K)$ denotes the class number of $K$ and $\mathcal{V}$ is defined as in \S\ref{prelim}.
\end{theorem}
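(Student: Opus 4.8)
The plan is to obtain the statement by quoting \cite[Theorem~0.3]{HT94}; the only genuine task is to match the objects and normalizations used there with those fixed in \S\ref{prelim} and in Proposition~\ref{integrality-congruence-ideal}. First I would recall precisely what Hida and Tilouine prove: for the CM Hida family $\g$ attached to $K$, whose arithmetic specializations $\g(\varphi)$ are theta series of Hecke characters of $K$, the congruence ideal governing congruences between $\g$ and the complement of the CM locus in the ordinary Hecke algebra is, on each $\chi$-isotypic piece, a divisor of $h(K)$ times a branch of the anti-cyclotomic Katz $p$-adic $L$-function; equivalently, via Rubin's main conjecture over $K$ (Theorem~\ref{rubin-main-conjecture}), it divides $h(K)$ times the relevant anti-cyclotomic specialization of the characteristic ideal of $X(\mathcal{K})$. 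I would then check that the annihilator $H_{\g}$ of the congruence ideal introduced in Proposition~\ref{integrality-congruence-ideal} matches the congruence module appearing in \cite{HT94} (both are cut out by the Hecke-algebra idempotent attached to $\g$, so this is immediate).

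Next I would translate the anti-cyclotomic Katz object into the module $X(K^{\textup{a}}_\infty)$. The anti-cyclotomic Katz $p$-adic $L$-function is the pullback of $L_{K,p}$ along the inclusion $K^{\textup{a}}_\infty\subset\mathcal{K}$; on characteristic ideals, by Theorem~\ref{rubin-main-conjecture} together with the behaviour of $X(\mathcal{K})$ under the projection $\mathcal{K}\to K^{\textup{a}}_\infty$, this pullback is implemented by applying the map $\mathcal{V}$ of \S\ref{prelim} to a generator $f_\chi$ of the characteristic ideal of $X(K^{\textup{a}}_\infty)_\chi$ --- the squaring $\sigma\mapsto\sigma^{2}$ built into $\mathcal{V}$ being exactly the relation $\sigma^{1-c}=\sigma^{2}$ valid on $H^-_{p^\infty}$. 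Feeding this into the divisibility from the first step yields $H_{\g,\chi}\mid h(K)\,\mathcal{V}(f_\chi)$ in $\Lambda^{\textup{a}}_\chi$.

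The step I expect to be the main obstacle --- bookkeeping rather than mathematics --- is reconciling the various normalizations: the complex and $p$-adic CM periods and the $|\mathcal{O}_K^\times|$-factors entering the interpolation formula for $L_{K,p}$ from \cite{katz78}, the normalization of the congruence module in \cite{HT94}, and the period conventions implicit in Rubin's formulation in \cite{rubin91}. One must confirm that all of these discrepancies collapse to a unit of $\Lambda^{\textup{a}}_\chi$ and that the power of $h(K)$ that survives is exactly one; since \cite[Theorem~0.3]{HT94} already records the divisibility in precisely this shape, I expect this verification to require no new input beyond carefully tracking indices.
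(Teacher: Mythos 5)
The paper offers no proof of this statement at all: it is quoted verbatim as \cite[Theorem 0.3]{HT94}, which is exactly what you propose to do, so your approach coincides with the paper's. One small caution on your bookkeeping: the divisibility in \cite{HT94} is already formulated directly against the characteristic ideal of the anticyclotomic module $X(K^{\textup{a}}_\infty)_\chi$ (via the map $\mathcal{V}:\Lambda^{\textup{a}}\to\Lambda_p$, $\sigma\mapsto\tilde\sigma^{1-c}$, whose composite with $\pi^{\textup{a}}$ is the squaring map), so invoking Rubin's main conjecture and $X(\mathcal{K})$ is unnecessary here --- the paper only brings those in later, in Corollary~\ref{cor:mu-invariant-congruence-ideal-and-katz}.
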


\begin{corollary}
\label{cor:conguence-number}We have
\[\mu (\pi^\textup{a}(H_{\g,\chi}))\le \mu(f_\chi),\]
for all characters of $\Delta$ that factor through $\Gal(K^\textup{a}_\infty/K)$. 
\end{corollary}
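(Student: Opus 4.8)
The plan is to deduce the $\mu$-invariant inequality directly from Theorem~\ref{theorem:main-conj-congruence-number} by applying the projection $\pi^\textup{a}\colon \Lambda_p \to \Lambda^\textup{a}$ (restricted to $\chi$-components) and tracking how $\mu$-invariants behave under divisibility and under the composite map $\pi^\textup{a}\circ\mathcal{V}$. Recall from \S\ref{prelim} that $\pi^\textup{a}\circ\mathcal{V}(\sigma)=\sigma^2$ for $\sigma\in H^-_{p^\infty}$; thus $\pi^\textup{a}\circ\mathcal{V}$ is, up to the degree-two isogeny $\gamma\mapsto\gamma^2$ on $\Gamma^\textup{a}$, essentially the identity on $\Lambda^\textup{a}$, and in particular it preserves $\mu$-invariants (the squaring map multiplies $\lambda$ by $2$ but leaves the power of $\varpi$ untouched). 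First I would apply $\pi^\textup{a}$ to the divisibility $H_{\g,\chi}\mid h(K)\,\mathcal{V}(f_\chi)$ in $\Lambda_\chi^\textup{a}$; since divisibility is preserved by ring homomorphisms, this gives $\pi^\textup{a}(H_{\g,\chi})\mid h(K)\,\pi^\textup{a}(\mathcal{V}(f_\chi))$ in $\pi^\textup{a}(\Lambda_\chi^\textup{a})$.

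Next I would compute the $\mu$-invariant of the right-hand side. By the assumption (stated at the very beginning of the paper) that the class number $h(K)$ is coprime to $p$, we have $\mu(h(K))=0$, so $\mu(h(K)\,\pi^\textup{a}(\mathcal{V}(f_\chi)))=\mu(\pi^\textup{a}(\mathcal{V}(f_\chi)))$. Then, using $\pi^\textup{a}\circ\mathcal{V}(\sigma)=\sigma^2$, one sees that $\pi^\textup{a}(\mathcal{V}(f_\chi))$ is obtained from $f_\chi$ by the substitution $\gamma\mapsto\gamma^2$; writing $f_\chi = \varpi^{\mu(f_\chi)}U G$ via Weierstrass preparation and noting that the image of a unit is a unit and the image of a distinguished polynomial is again (an associate of) a distinguished polynomial under this substitution, one concludes $\mu(\pi^\textup{a}(\mathcal{V}(f_\chi)))=\mu(f_\chi)$. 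Finally, since $\mu$-invariants are monotone under divisibility in $\Lambda_\chi^\textup{a}$ (if $A\mid B$ then $\mu(A)\le\mu(B)$, as the $\varpi$-adic valuation of $A$ divides that of $B$ after passing to Weierstrass factorizations), the divisibility from the previous paragraph yields $\mu(\pi^\textup{a}(H_{\g,\chi}))\le\mu(f_\chi)$, as desired.

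The only mild subtlety — and the step I would write out most carefully — is verifying that $\pi^\textup{a}\circ\mathcal{V}$ does not change the $\mu$-invariant, i.e.\ that the substitution $\gamma\mapsto\gamma^2$ on $\Lambda^\textup{a}\cong R\lb T\rb$ preserves the power of $\varpi_R$ dividing an element. This is elementary: such a substitution is an injective $R$-algebra endomorphism of $R\lb T\rb$ sending $T$ to a power series with zero constant term whose linear coefficient is $2$ (a unit, as $p$ is odd), hence it sends a distinguished polynomial of degree $d$ to a power series whose Weierstrass factorization has distinguished part of degree $2d$ and unit part a unit, and it fixes $\varpi_R$; so the $\mu$-invariant is unchanged (and $\lambda$ doubles, which we do not need here). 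With this in hand the proof is immediate, so I expect no real obstacle — the corollary is a formal consequence of Theorem~\ref{theorem:main-conj-congruence-number} together with the coprimality of $h(K)$ and $p$.
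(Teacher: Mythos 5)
Your proposal is correct and follows exactly the paper's argument: apply $\pi^\textup{a}$ to the divisibility of Theorem~\ref{theorem:main-conj-congruence-number}, discard $h(K)$ using $p\nmid h(K)$, and use that $\mu(\pi^\textup{a}(\mathcal{V}(G)))=\mu(G)$ together with monotonicity of $\mu$ under divisibility. The only difference is that you spell out the elementary verification that the substitution $\gamma\mapsto\gamma^2$ preserves $\mu$, which the paper takes for granted.
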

\begin{proof}
By Theorem~\ref{theorem:main-conj-congruence-number}, $\pi^{\textup{a}}(H_{\g,\chi})\mid h(K) \pi^{\textup{a}}\mathcal{V}(f_\chi)$ over $\Lambda^\textup{a}_\chi$. As $p$ does not divide $h(K)$ by assumption and $\mu(\pi^{\textup{a}}(\mathcal{V}(G)))=\mu(G)$ for every $G\in \Lambda^{\textup{a}}$, the claim follows.
\end{proof}
Next we prove a relation between $f_\chi$ and $\pi^\textup{a}(F_\chi)$.
\begin{lemma}
\label{mu-invariants-class-groups}Let $\chi$ be a character of $\Delta$ that factors through $\Gal(K^\textup{a}_\infty/K)$. Then \[\mu(\pi^{\textup{a}}(F_\chi))=\mu(f_\chi).\] 
\end{lemma}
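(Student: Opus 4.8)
The statement to prove is $\mu(\pi^{\textup{a}}(F_\chi)) = \mu(f_\chi)$, where $F_\chi$ generates the characteristic ideal of $X(\mathcal{K})_\chi$ over $\Lambda_\chi$ and $f_\chi$ generates the characteristic ideal of $X(K^{\textup{a}}_\infty)_\chi$ over $\Lambda^{\textup{a}}_\chi$. The natural approach is to relate the two Galois groups $X(\mathcal{K})$ and $X(K^{\textup{a}}_\infty)$ directly, using the fact that $K^{\textup{a}}_\infty$ is the subextension of $\mathcal{K}$ with Galois group $H^-_{p^\infty}$, so that $\Gal(\mathcal{K}/K^{\textup{a}}_\infty) = H^+_{p^\infty} \cong \Zp$. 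Since both $M(\mathcal{K})$ and $M(K^{\textup{a}}_\infty)$ are defined as maximal abelian pro-$p$ extensions unramified outside $\mathfrak{p}$, the field $M(K^{\textup{a}}_\infty)$ sits inside (the relevant completion of) $M(\mathcal{K})$, and one gets a surjection of $\Lambda^{\textup{a}}$-modules $X(\mathcal{K})_{H^+_{p^\infty}} \twoheadrightarrow X(K^{\textup{a}}_\infty)$ — more precisely $X(K^{\textup{a}}_\infty)$ is the quotient of $X(\mathcal{K})$ by the submodule generated by commutators and by inertia at $\mathfrak{p}$ in the extra $\Zp$-direction. The key structural input is a control-type statement: passing from $\Lambda_p$ to $\Lambda^{\textup{a}} = \Lambda_p / (\gamma - 1)$ for a generator $\gamma$ of $H^+_{p^\infty}$, the characteristic ideal specializes correctly up to the $\mu$-invariant because $\mu$ is insensitive to the difference between $X(\mathcal{K})_{H^+_{p^\infty}}$ and $X(K^{\textup{a}}_\infty)$, both of which differ from the honest quotient only by a module that is finitely generated over $R$ (hence $\mu = 0$), coming from local contributions at $\mathfrak{p}$.

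Concretely, the steps I would carry out are: (1) identify $\Gal(\mathcal{K}/K^{\textup{a}}_\infty)$ with $H^+_{p^\infty}\cong\Zp$ and fix a topological generator $\gamma$, so $\Lambda^{\textup{a}}_\chi = \Lambda_\chi/(\gamma-1)$; (2) set up the exact sequence relating $X(\mathcal{K})_\chi$ and its $H^+_{p^\infty}$-coinvariants to $X(K^{\textup{a}}_\infty)_\chi$, using class field theory to control the kernel and cokernel — these are governed by $H^0$, $H^1$ of $H^+_{p^\infty}$ acting on $X(\mathcal{K})_\chi$ and by the decomposition/inertia groups at the primes above $\mathfrak{p}$ in $\mathcal{K}/K^{\textup{a}}_\infty$; (3) observe that all the error terms are finitely generated over $R$ (in fact they come from at most finitely many local terms at $\mathfrak{p}$ and from $X(\mathcal{K})_\chi^{H^+_{p^\infty}}$, which is pseudo-null hence finite since $X(\mathcal{K})_\chi$ is $\Lambda_\chi$-torsion with no nonzero pseudo-null submodule — or simply note they are $R$-finitely generated, forcing $\mu=0$); (4) invoke the standard fact that if $M$ is a finitely generated torsion $\Lambda_\chi$-module with characteristic power series $G$ and $M/(\gamma-1)M$ is $\Lambda^{\textup{a}}_\chi$-torsion, then $\mu(M/(\gamma-1)M) = \mu(\pi^{\textup{a}}(G))$ — this is the multivariable-to-one-variable $\mu$-invariant comparison — and combine with (2)–(3) to conclude $\mu(\pi^{\textup{a}}(F_\chi)) = \mu(X(K^{\textup{a}}_\infty)_\chi) = \mu(f_\chi)$.

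The main obstacle I expect is step (2)–(3): making the comparison between $X(\mathcal{K})_\chi$ and $X(K^{\textup{a}}_\infty)_\chi$ precise, i.e. writing down the correct exact sequence from class field theory and checking that the ramification at $\mathfrak{p}$ (which is the one place allowed to ramify) contributes only an $R$-finitely generated discrepancy rather than something with positive $\mu$. One has to be careful that $\mathfrak{p}$ may split further in $\mathcal{K}/K^{\textup{a}}_\infty$ and that the inertia subgroup there is a quotient of $H^+_{p^\infty}\cong\Zp$, so the local term is at worst a cyclic $R\lb T\rb$-module killed by a distinguished polynomial, hence has $\mu=0$; once this is in hand the additivity of $\mu$ in exact sequences closes the argument. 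The global coinvariants computation and the $\mu$-comparison in step (4) are then routine Iwasawa-theoretic bookkeeping, essentially as in Greenberg's foundational treatment and as already used in the proof of Lemma~\ref{no-finite-submodule} above.
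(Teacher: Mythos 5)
Your plan is correct and follows essentially the same route as the paper: the paper likewise identifies the $H^+_{p^\infty}$-coinvariants $\pi(X(\mathcal{K}))$ with $\Gal(M'(\mathcal{K})/\mathcal{K})$ for $M'(\mathcal{K})$ the maximal subextension of $M(\mathcal{K})$ abelian over $K^{\textup{a}}_\infty$, shows this agrees with $M(K^{\textup{a}}_\infty)$ up to the cokernel $\Gal(\mathcal{K}/K^{\textup{a}}_\infty)$, which is finitely generated over $\Zp$ and hence has vanishing $\mu$-invariant, and then compares $\mu$-invariants exactly as in your steps (2)--(4). The only difference is cosmetic: the paper disposes of your anticipated obstacle at $\mathfrak{p}$ by noting directly that $\mathcal{K}/K^{\textup{a}}_\infty$ is unramified, so no local inertia terms intervene.
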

\begin{proof}
For every $\Lambda_p$-module $M$ we denote by $\pi\colon M\to M/\ker(\pi)M$ the natural projection induced by $\pi^\textup{a}\colon \Lambda_p\to \Lambda^{\textup{a}}$. Let $M'(\mathcal{K})\subset M(\mathcal{K})$ be the maximal extension that is abelian over $K^\textup{a}_\infty$. Note that $\mathcal{K}\subset M'(\mathcal{K})$ and that $\pi(X(\mathcal{K}))$ is the maximal quotient of $X(\mathcal{K})$ on which $\Gal(\mathcal{K}/K^\textup{a}_\infty)$ acts trivially. It follows that $\pi(X(\mathcal{K}))=\Gal(M'(\mathcal{K})/\mathcal{K})$. As $\mathcal{K}/K^\textup{a}_\infty$ is unramified we see that $M'(\mathcal{K})=M(K^\textup{a}_\infty)$. Thus, we have an exact sequence. 
\[0\to \pi(X(\mathcal{K}))_\chi\to X(K^\textup{a}_\infty)_\chi\to \Gal(K_\infty/K^\textup{a}_\infty)^{\textup{pro-}p}.\]
The lemma now follows from the fact that $\Gal(K_\infty/K^\textup{a}_\infty)^{\textup{pro-}p}$ is finitely generated over $\Zp$ (so its $\mu$-invariant vanishes).
\end{proof}
\begin{corollary} 
\label{cor:mu-invariant-congruence-ideal-and-katz}Let $\mathbbm{1}$ be the trivial character of $\Delta$. Then we have 
\[\mu(\pi^\ac(H_{\g,\mathbbm{1}}))\le\mu(L^\textup{ac}_{K,p}).\]
\end{corollary}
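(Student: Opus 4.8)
The plan is to deduce the inequality from Corollary~\ref{cor:conguence-number}, Lemma~\ref{mu-invariants-class-groups} and Rubin's Theorem~\ref{rubin-main-conjecture}, all applied at the trivial character $\chi=\mathbbm{1}$ of $\Delta$, once one checks that passing between the projections $\pi^\textup{a}$ and $\pi^\ac$ does not change the $\mu$-invariant of $\mathbbm{1}$-isotypic quantities.

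Both of the facts I would establish first rest on the hypothesis that $p\ge 3$ is coprime to the class number of $K$. Since $\Delta$ then has order prime to $p$ while $\Gamma^\ac\cong\Zp$ is torsion-free, $\Delta$ maps trivially into $\Gamma^\ac$; hence, writing $e_\chi$ for the idempotent cutting out the $\chi$-component, we have $\pi^\ac(e_{\mathbbm{1}})=1$ and $\pi^\ac(e_\chi)=0$ for $\chi\neq\mathbbm{1}$, so that $\pi^\ac(L_{K,p})=\pi^\ac(L_{K,p,\mathbbm{1}})$. As $\iota$ is an automorphism of $\Lambda^\ac$ (because $2\in\Zp^\times$, the map $\gamma\mapsto\gamma^2$ carries a topological generator of $\Gamma^\ac$ to a topological generator), it follows that $\mu(L^\textup{ac}_{K,p})=\mu(\pi^\ac(L_{K,p,\mathbbm{1}}))$. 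Next, $\pi^\ac$ is the composite of $\pi^\textup{a}\colon\Lambda_p\to\Lambda^\textup{a}$ with the projection $\Lambda^\textup{a}\to\Lambda^\ac$ induced by $H^-_{p^\infty}\twoheadrightarrow\Gamma^\ac$, whose kernel is generated by $\delta-1$ for $\delta$ in the torsion subgroup of $H^-_{p^\infty}$; this torsion is again prime-to-$p$, so the projection restricts to an isomorphism on the $\mathbbm{1}$-isotypic component of $\Lambda^\textup{a}$. Since $\pi^\textup{a}$ sends a $\Delta$-invariant element to one invariant under the torsion of $H^-_{p^\infty}$, both $\pi^\textup{a}(L_{K,p,\mathbbm{1}})$ and $\pi^\textup{a}(H_{\g,\mathbbm{1}})$ lie in that component, whence
\[
\mu(\pi^\ac(L_{K,p,\mathbbm{1}}))=\mu(\pi^\textup{a}(L_{K,p,\mathbbm{1}})),\qquad \mu(\pi^\ac(H_{\g,\mathbbm{1}}))=\mu(\pi^\textup{a}(H_{\g,\mathbbm{1}})).
\]

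The corollary then follows from the chain
\[
\mu(\pi^\ac(H_{\g,\mathbbm{1}}))=\mu(\pi^\textup{a}(H_{\g,\mathbbm{1}}))\le\mu(f_{\mathbbm{1}})=\mu(\pi^\textup{a}(F_{\mathbbm{1}}))=\mu(\pi^\textup{a}(L_{K,p,\mathbbm{1}}))=\mu(L^\textup{ac}_{K,p}),
\]
in which the inequality is Corollary~\ref{cor:conguence-number} at $\chi=\mathbbm{1}$, the second equality is Lemma~\ref{mu-invariants-class-groups}, and the third uses Rubin's Theorem~\ref{rubin-main-conjecture}: $F_{\mathbbm{1}}$ and $L_{K,p,\mathbbm{1}}$ generate the same ideal of $\Lambda_{\mathbbm{1}}$, hence differ by a unit, and the image of a unit under $\pi^\textup{a}$ is again a unit, so their $\mu$-invariants coincide.

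The only genuinely delicate point I anticipate is the bookkeeping around the nested projections $\pi^\textup{a}$, $\pi^\ac$, $\Lambda^\textup{a}\to\Lambda^\ac$ and the several $\Delta$-isotypic decompositions in play: one must track that restricting to the trivial character is compatible throughout (e.g. that $\pi^\textup{a}$ really does carry the $\mathbbm{1}$-part of $\Lambda_p$ into the $\mathbbm{1}$-part of $\Lambda^\textup{a}$) and that ``prime-to-$p$ torsion'' indeed makes $\Lambda^\textup{a}\to\Lambda^\ac$ an isomorphism on the relevant piece. Everything else is the formal additivity of $\mu$ and its invariance under multiplication by units and under isomorphisms of Iwasawa algebras. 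A cleaner alternative, avoiding some of this, is to prove once and for all that a surjection of Iwasawa algebras of this shape preserves the $\mu$-invariant of the trivial-character component of an arbitrary element, and then invoke it for $H_{\g,\mathbbm{1}}$ and $L_{K,p,\mathbbm{1}}$ simultaneously.
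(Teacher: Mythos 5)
Your argument is correct and follows the same route as the paper: combine Corollary~\ref{cor:conguence-number}, Lemma~\ref{mu-invariants-class-groups} and Theorem~\ref{rubin-main-conjecture} at $\chi=\mathbbm{1}$, then compare with $L^{\textup{ac}}_{K,p}$ via the definition of $\pi^{\ac}$ and $\iota$. The only difference is that you spell out the bookkeeping between $\pi^{\textup{a}}$ and $\pi^{\ac}$ on the $\mathbbm{1}$-isotypic components (and thereby get an equality $\mu(\pi^{\ac}(L_{K,p,\mathbbm{1}}))=\mu(L^{\textup{ac}}_{K,p})$ where the paper only records the inequality it needs), which the paper leaves implicit.
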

\begin{proof}
It follows from Theorem~\ref{rubin-main-conjecture}, Corollary~\ref{cor:conguence-number} and Lemma~\ref{mu-invariants-class-groups} that
\[\mu (\pi^\ac(H_{\g,\mathbbm{1}}))\le \mu(\pi^\ac(L_{K,p,\mathbbm{1}})).\]
By the definition of $L^\ac_{p,K}$, we have $\mu(\pi^\ac(L_{K,p,\mathbbm{1}}))\le \mu(L^\ac_{p,K})$, which concludes the proof.
\end{proof}

We can now deduce the following integral version of Corollary~\ref{integrality-up-to-p}.
\begin{corollary}\label{cor:integrality} Let $h$ be as in Corollary \ref{integrality-up-to-p}. Then
\[\mathcal{L}^\textup{ac}_{p,N}(h)L^\textup{ac}_{K,p}\in \Lambda^{ac}.\]
\end{corollary}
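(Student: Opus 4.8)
The plan is to combine the "up to $p$" statement of Corollary~\ref{integrality-up-to-p} with the denominator estimate of Proposition~\ref{integrality-congruence-ideal} and the $\mu$-invariant comparison of Corollary~\ref{cor:mu-invariant-congruence-ideal-and-katz}. By $\mathcal{O}_v$-linearity of $\mathcal{L}_{p,N}$ (Remark~\ref{rem:linearity}) it suffices to treat $h = \check f$ for a single newform $f$ satisfying the hypotheses of Theorem~\ref{BDP-l-function-for-eigenforms}; moreover, by the remark following Theorem~\ref{BDP-l-function-for-eigenforms}, $\mathcal{L}^{\textup{ac}}_{p,N}(\check f)L^{\textup{ac}}_{K,p}$ differs from $\mathcal{L}^{\textup{ac}}_{p,N'}(f)L^{\textup{ac}}_{K,p}$ by an explicit element of $\Lambda^{\ac}$, so we may even reduce to $h = f$ a newform of level $N'$, where Theorem~\ref{thm:integrality-initial-case} already gives integrality. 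Wait --- that reduction already proves it; but the point of stating the corollary for general $h$ in Corollary~\ref{integrality-up-to-p} is presumably to allow $h$ to be an $L_v$-linear (not $\mathcal{O}_v$-linear) combination, so I should argue directly.

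First I would apply Proposition~\ref{integrality-congruence-ideal}: for the chosen annihilator $H_{\g}$ of the congruence ideal, $H_{\g}\mathcal{L}_{p,N}(h)\in\Lambda_p$ for every $h\in S_2(\Gamma_0(N),\mathcal{O}_v)$. Applying $\pi^{\ac}$ and using that the product with $L^{\textup{ac}}_{K,p}$ lies in $\Lambda^{\ac}\otimes\Q_p$ by Corollary~\ref{integrality-up-to-p}, we get that $\pi^{\ac}(H_{\g})\cdot\big(\mathcal{L}^{\textup{ac}}_{p,N}(h)L^{\textup{ac}}_{K,p}\big)\in\Lambda^{\ac}$ while $\mathcal{L}^{\textup{ac}}_{p,N}(h)L^{\textup{ac}}_{K,p}\in\Lambda^{\ac}\otimes\Q_p$. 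So the denominator of $F:=\mathcal{L}^{\textup{ac}}_{p,N}(h)L^{\textup{ac}}_{K,p}$, as an element of $\Frac(\Lambda^{\ac})$, divides $\pi^{\ac}(H_{\g})$. To conclude $F\in\Lambda^{\ac}$, I would work $\chi$-component by $\chi$-component (using $|\Delta|$ coprime to $p$) and show that the relevant component of $\pi^{\ac}(H_{\g})$ is a unit, equivalently has vanishing $\mu$-invariant and, more to the point, that $F$ has no pole. Here the key input is Corollary~\ref{cor:mu-invariant-congruence-ideal-and-katz}: $\mu(\pi^{\ac}(H_{\g,\mathbbm 1}))\le\mu(L^{\textup{ac}}_{K,p})$. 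Since $F$ already contains $L^{\textup{ac}}_{K,p}$ as a factor, any prime of $\Lambda^{\ac}$ dividing $\pi^{\ac}(H_{\g})$ in a way that could contribute a pole is already "paid for" by the corresponding factor of $L^{\textup{ac}}_{K,p}$ in the numerator; comparing multiplicities via the $\mu$- and $\lambda$-invariant bookkeeping of the relevant height-one primes then forces $F\in\Lambda^{\ac}$.

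More precisely, the argument I expect to carry out is: write $\mathcal{L}^{\textup{ac}}_{p,N}(h) = G/\pi^{\ac}(H_{\g})$ with $G\in\Lambda^{\ac}$ (possible by Proposition~\ref{integrality-congruence-ideal} after projecting); then $F = G\cdot L^{\textup{ac}}_{K,p}/\pi^{\ac}(H_{\g})$. Decomposing along characters $\chi$ of $\Delta$ and invoking Rubin's main conjecture (Theorem~\ref{rubin-main-conjecture}), Corollary~\ref{cor:conguence-number}, and Lemma~\ref{mu-invariants-class-groups}, one sees $\pi^{\ac}(H_{\g,\chi})$ divides (up to a unit) a power of the relevant Katz/class-group characteristic element, so $L^{\textup{ac}}_{K,p}$ carries enough of that divisor to cancel $\pi^{\ac}(H_{\g})$; hence the quotient $L^{\textup{ac}}_{K,p}/\pi^{\ac}(H_{\g})$ --- or rather $G\cdot L^{\textup{ac}}_{K,p}/\pi^{\ac}(H_{\g})$ --- is integral. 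The main obstacle I anticipate is precisely this last cancellation: one must be careful that the divisibility $\pi^{\ac}(H_{\g,\chi}) \mid (\text{something dividing } L^{\textup{ac}}_{K,p,\chi})$ holds at the level of ideals in $\Lambda^{\ac}_\chi$ (not merely for $\mu$-invariants), which requires upgrading Corollaries~\ref{cor:conguence-number} and~\ref{cor:mu-invariant-congruence-ideal-and-katz} from a statement about $\mu$-invariants to an honest divisibility, using Theorem~\ref{theorem:main-conj-congruence-number} together with the fact that $p\nmid h(K)$ and that $\mathcal V$ behaves well under $\pi^{\ac}$. Once that divisibility is in hand, the corollary follows formally.
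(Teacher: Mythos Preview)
You have assembled exactly the three ingredients the paper uses (Proposition~\ref{integrality-congruence-ideal}, Corollary~\ref{integrality-up-to-p}, Corollary~\ref{cor:mu-invariant-congruence-ideal-and-katz}), but you misidentify how they fit together and end up manufacturing an obstacle that is not there. The ``upgrade to honest divisibility'' you say you need --- that $\pi^{\ac}(H_{\g,\mathbbm 1})$ divide $L^{\textup{ac}}_{K,p}$ at every height-one prime --- is neither proved in the paper nor required. The whole point of invoking Corollary~\ref{integrality-up-to-p} first is that it tells you $F:=\mathcal{L}^{\textup{ac}}_{p,N}(h)L^{\textup{ac}}_{K,p}\in\Lambda^{\ac}\otimes\Q_p$, so the \emph{only} possible denominator of $F$ is a power of $\varpi$; the distinguished-polynomial primes are already taken care of. This reduces the question purely to a $\mu$-invariant computation, for which the inequality of Corollary~\ref{cor:mu-invariant-congruence-ideal-and-katz} is precisely sufficient.

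Concretely: write $F=\varpi^{-m}G$ with $m\ge 0$ and $G\in\Lambda^{\ac}$ not divisible by $\varpi$ (possible by Corollary~\ref{integrality-up-to-p}). Projecting Proposition~\ref{integrality-congruence-ideal} to $\Lambda^{\ac}$ gives $\pi^{\ac}(H_{\g,\mathbbm 1})\,\mathcal{L}^{\textup{ac}}_{p,N}(h)\in\Lambda^{\ac}$, hence $\pi^{\ac}(H_{\g,\mathbbm 1})\,F\in L^{\textup{ac}}_{K,p}\cdot\Lambda^{\ac}$, say $\pi^{\ac}(H_{\g,\mathbbm 1})\,G=\varpi^{m}L^{\textup{ac}}_{K,p}\,A$ with $A\in\Lambda^{\ac}$. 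Comparing $\mu$-invariants yields
\[
\mu\bigl(\pi^{\ac}(H_{\g,\mathbbm 1})\bigr)=m+\mu(L^{\textup{ac}}_{K,p})+\mu(A)\ \ge\ m+\mu(L^{\textup{ac}}_{K,p}),
\]
and combining with Corollary~\ref{cor:mu-invariant-congruence-ideal-and-katz} forces $m\le 0$, i.e.\ $F\in\Lambda^{\ac}$. (Your side remark that ``$\pi^{\ac}(H_{\g})$ is a unit, equivalently has vanishing $\mu$-invariant'' is also incorrect: these are not equivalent, and neither is claimed.)
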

\begin{proof}
     This follows from Proposition \ref{integrality-congruence-ideal} combined with Corollaries \ref{integrality-up-to-p} and \ref{cor:mu-invariant-congruence-ideal-and-katz}.
\end{proof}

\subsection{Proof of Theorem~\ref{thmB}}
We are now ready to give the proof of Theorem~\ref{thmB} stated in the introduction. 
\begin{theorem}
\label{main-theorem-analytic}
Let $N_1$ and $N_2$ be two positive integers that are coprime to $p\cdot d_K$ satisfying \ref{GHH}. Assume that $N_i$ satisfies \ref{sq-free}. For $\is$, let $f_i=\sum b^{(i)}_qq^n\in S_2(\Gamma_0(N_i),\cO_v)$ be a newform satisfying  \eqref{cong}, \ref{div}, and \eqref{unit}, with respect to $N'=N_i$ and $N$ chosen to be a common multiple of $N_1$ and $N_2$ that is also a square.  
 
Then  $\alpha(f_1)$ is a $p$-adic unit and $\mu(L_p^\BDP(f_1))=0$ if and only if $\alpha(f_2)$ is a $p$-adic unit and $\mu(L_p^\BDP(f_2))=0$. Furthermore, when this holds, \[2\lambda(L_p^\BDP(f_1))+\sum_{\mathfrak{q}\mid\tilde{N}^{(1)}_3}\lambda(\mathcal{P}^{f_1}_\mathfrak{q})=2\lambda(L_p^\BDP(f_2))+\sum_{\mathfrak{q}\mid\tilde{N}^{(2)}_3}\lambda(\mathcal{P}^{f_2}_\mathfrak{q}).\]
\end{theorem}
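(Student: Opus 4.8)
\emph{Overview and reduction to a Rankin--Selberg comparison.} The plan is, for each $i\in\{1,2\}$, first to translate the Iwasawa invariants of $L_p^{\BDP}(f_i)$ into those of an anticyclotomic Rankin--Selberg object attached to a common level, and then to compare the two resulting objects modulo $\varpi$ via the congruence \eqref{cong}. I would begin by fixing once and for all an integer $N$ that is a common multiple of $N_1$ and $N_2$, a perfect square, and divisible enough that $v_\ell(N)>\max(v_\ell(N_1),v_\ell(N_2))$ for every prime $\ell$; for such an $N$ the hypotheses of Theorem~\ref{BDP-l-function-for-eigenforms} (in particular $q^2\mid N$ whenever $q\mid N/N_i$) hold, and \eqref{unit} is assumed for this choice. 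Applying Theorem~\ref{thm:integrality-initial-case} to the newform $f_i$ of level $N_i$ gives
\[
\mathcal{L}_{p,N_i}^{\ac}(f_i)\,L_{K,p}^{\ac}=\alpha(f_i)\,N_i\,u_K\,\sigma_{\mathcal{N}_i^+}^{-1}\,L_p^{\BDP}(f_i)^2,
\]
where $N_i u_K\sigma_{\mathcal{N}_i^+}^{-1}\in(\Lambda^{\ac})^\times$ since $p\nmid N_i$, and where $\alpha(f_i)\in\cO_v$; hence $\mu(\mathcal{L}_{p,N_i}^{\ac}(f_i)L_{K,p}^{\ac})=\ord_\varpi\alpha(f_i)+2\mu(L_p^{\BDP}(f_i))$ and $\lambda(\mathcal{L}_{p,N_i}^{\ac}(f_i)L_{K,p}^{\ac})=2\lambda(L_p^{\BDP}(f_i))$. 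Combining this with Theorem~\ref{BDP-l-function-for-eigenforms}(b), which compares these invariants with those of $\mathcal{L}_{p,N}^{\ac}(\check f_i)L_{K,p}^{\ac}$ up to the split contributions $\mathcal{P}^{f_i}_{\mathfrak q}$ ($\mathfrak q\mid\tilde N_3^{(i)}$), it will suffice to show that the two elements $\mathcal{L}_{p,N}^{\ac}(\check f_1)L_{K,p}^{\ac}$ and $\mathcal{L}_{p,N}^{\ac}(\check f_2)L_{K,p}^{\ac}$ — which lie in $\Lambda^{\ac}$ by Theorem~\ref{BDP-l-function-for-eigenforms}(a) — have the same $\mu$-invariant and, when it vanishes, the same $\lambda$-invariant.

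\emph{The congruence $\check f_1\equiv\check f_2\pmod\varpi$.} Because $v_\ell(N)>v_\ell(N_i)$ for every prime $\ell\mid N$, each such $\ell$ divides $N/N_i$, so $a_\ell(\check f_i)=0$ for all $\ell\mid N$; by multiplicativity $a_n(\check f_i)=a_n(f_i)$ when $\gcd(n,N)=1$ and $a_n(\check f_i)=0$ otherwise. Next I would invoke \eqref{cong}, which gives $a_n(f_1)\equiv a_n(f_2)\pmod\varpi$ for $n$ coprime to $N_1N_2p$, together with the fact that the induced isomorphism $\bar\rho_1|_{G_{\Qp}}\simeq\bar\rho_2|_{G_{\Qp}}$ forces $a_p(f_1)\equiv a_p(f_2)\pmod\varpi$ (both forms are simultaneously $p$-ordinary with the same unramified quotient at $p$, or simultaneously non-ordinary with $a_p\equiv0$); since $p\nmid N$ this yields $a_n(f_1)\equiv a_n(f_2)\pmod\varpi$ for all $n$ coprime to $N$. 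Thus every Fourier coefficient of $\check f_1-\check f_2$ lies in $\varpi\cO_v$, and the $q$-expansion principle gives $\check f_1-\check f_2=\varpi g$ for some $g\in S_2(\Gamma_0(N),\cO_v)$, which is moreover an $L_v$-linear combination of the Hecke eigenforms $\check f_1,\check f_2$ — each of which is of the shape to which Theorem~\ref{BDP-l-function-for-eigenforms} and Corollary~\ref{cor:integrality} apply.

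\emph{Comparison modulo $\varpi$, conclusion, and the main obstacle.} By the $\cO_v$-linearity of $\mathcal{L}_{p,N}^{\ac}$ (Remark~\ref{rem:linearity}),
\[
\mathcal{L}_{p,N}^{\ac}(\check f_1)L_{K,p}^{\ac}-\mathcal{L}_{p,N}^{\ac}(\check f_2)L_{K,p}^{\ac}=\varpi\cdot\mathcal{L}_{p,N}^{\ac}(g)L_{K,p}^{\ac},
\]
and the right-hand side lies in $\varpi\Lambda^{\ac}$ because $\mathcal{L}_{p,N}^{\ac}(g)L_{K,p}^{\ac}\in\Lambda^{\ac}$ by Corollary~\ref{cor:integrality}. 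Consequently the two elements $\mathcal{L}_{p,N}^{\ac}(\check f_i)L_{K,p}^{\ac}$ have the same image in $\Lambda^{\ac}/\varpi\cong k[[T]]$ ($k$ the residue field of $R$): one of them has $\mu=0$ iff this common image is nonzero iff the other has $\mu=0$, and when the image is nonzero both $\lambda$-invariants equal its $T$-adic valuation and hence coincide. Unwinding the identities from the first step, this is precisely the statement that $\alpha(f_1)$ is a $p$-adic unit with $\mu(L_p^{\BDP}(f_1))=0$ if and only if the same holds for $f_2$, and that in that case $2\lambda(L_p^{\BDP}(f_1))+\sum_{\mathfrak q\mid\tilde N_3^{(1)}}\lambda(\mathcal{P}^{f_1}_{\mathfrak q})=2\lambda(L_p^{\BDP}(f_2))+\sum_{\mathfrak q\mid\tilde N_3^{(2)}}\lambda(\mathcal{P}^{f_2}_{\mathfrak q})$. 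I expect the main obstacle to be exactly the integrality assertion $\mathcal{L}_{p,N}^{\ac}(\check f_1-\check f_2)L_{K,p}^{\ac}\in\varpi\Lambda^{\ac}$ rather than merely $\varpi\cdot\Frac\Lambda^{\ac}$: this is where Corollary~\ref{cor:integrality}, and behind it the Hida--Tilouine congruence-ideal input, is indispensable, applied not to a $\check{\textup{newform}}$ but to the auxiliary combination $\varpi^{-1}(\check f_1-\check f_2)$; a secondary point is to choose the auxiliary level $N$ large enough that $\check f_1$ and $\check f_2$ become literally congruent mod $\varpi$ while keeping \eqref{unit} and the divisibility hypotheses of Theorem~\ref{BDP-l-function-for-eigenforms} intact.
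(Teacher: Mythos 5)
Your proposal is correct and follows essentially the same route as the paper: pass to the stabilized forms $\check f_i$ at a common level $N$, use \eqref{cong} (plus the $a_p$-congruence, which the paper gets from Wiles' theorem in the ordinary case) to write $\check f_1-\check f_2=\varpi h$, invoke the integrality of $\mathcal{L}^{\ac}_{p,N}(h)L^{\ac}_{K,p}$ from Corollary~\ref{cor:integrality} to conclude the two products agree mod $\varpi$, and then unwind via Theorems~\ref{thm:integrality-initial-case} and \ref{BDP-l-function-for-eigenforms}(b). You also correctly identify the Hida--Tilouine congruence-ideal input as the crucial non-formal step.
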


\begin{proof}
The hypothesis \eqref{cong} implies that $b_q^{(1)}\equiv b_q^{(2)} \pmod {\varpi}$ for all $q$ that are coprime to $pN$. If $f_i$ are non-ordinary at $p$, then $b^{(1)}_p\equiv b^{(2)}_p \equiv 0\ \pmod {\varpi}$. If $f_i$ are ordinary at $p$,  the congruence $b_p^{(1)}\equiv b_p^{(2)}\pmod{\varpi}$ follows from \cite[Theorem 2]{wiles88}. In other words, the Fourier coefficients of $f_1$ and $f_2$ are congruent modulo $\varpi$ at primes outside $N$.
Let $\check{f}_1$ and $\check{f}_2$  be the elements of $S_2(\Gamma_0(N),\cO_v)$ given by Definition \ref{def:fcheck}. All their Fourier coefficients are congruent modulo $\varpi$. In particular, there exists $h\in S_2(\Gamma_0(N),\cO_v)$ such that $\check{f}_1-\check{f}_2=\varpi h$.

By Corollary \ref{cor:integrality} we see that $L^\textup{ac}_{K,p}\mathcal{L}^\textup{ac}(h)\in \Lambda^\textup{ac}$. As discussed in Remark~\ref{rem:linearity}, we have
\begin{equation}
    \label{congruence-of-l-functions}
\left(\mathcal{L}^\textup{ac}_{p,N}(\check{f}_1)-\mathcal{L}^\textup{ac}_{p,N}(\check{f}_2)\right)L^\textup{ac}_{K,p}=\varpi L^\textup{ac}_{K,p}\mathcal{L}^\textup{ac}_{p,N}(h)\in \varpi\Lambda^\textup{ac}.\end{equation}
Thus, 
\[
\mu(\mathcal{L}^\textup{ac}_{p,N}(\check{f}_1)L^\textup{ac}_{K,p})=0\Leftrightarrow \mu(\mathcal{L}^\textup{ac}_{p,N}(\check{f}_2)L^\textup{ac}_{K,p})=0.
\]
Consequently, Theorem~\ref{BDP-l-function-for-eigenforms}(b) implies that
\[
\mu(\mathcal{L}^\textup{ac}_{p,N_1}(\check{f}_1)L^\textup{ac}_{K,p})=0\Leftrightarrow \mu(\mathcal{L}^\textup{ac}_{p,N_2}(\check{f}_2)L^\textup{ac}_{K,p})=0.
\]
Therefore, the first assertion of the theorem follows from 
Theorems \ref{thm:integrality-initial-case}.

When these equivalent conditions hold, equation \eqref{congruence-of-l-functions} implies that
\[\lambda(\mathcal{L}^\textup{ac}_{p,N}(\check{f}_1)L^\textup{ac}_{K,p})=\lambda(\mathcal{L}^\textup{ac}_{p,N}(\check{f}_2)L^\textup{ac}_{K,p}).\]
Therefore, combining Theorems \ref{thm:integrality-initial-case} and \ref{BDP-l-function-for-eigenforms}(b)  gives rise to the equations
\begin{align*}
    2\lambda({L}_p^\BDP(f_1))+\sum_{q\mid\tilde{N}^{(1)}_3}\sum_{\mathfrak{q}\mid q}\lambda(\mathcal{P}^{f_1}_\mathfrak{q})=\ &\lambda(\mathcal{L}^\ac_{p,N_1}(f_1)L^\textup{ac}_{K,p})+\sum_{q\mid\tilde{N}^{(1)}_3}\sum_{\mathfrak{q}\mid q}\lambda(\mathcal{P}^{f_1}_\mathfrak{q})\\=\ &\lambda(\mathcal{L}^\textup{ac}_{p,N}(\check{f}_1)L^\textup{ac}_{K,p})\\
    =\ &\lambda(\mathcal{L}^\textup{ac}_{p,N}(\check{f}_2)L^\textup{ac}_{K,p})\\
    =\ &\lambda(\mathcal{L}_{p,N_2}^\ac(f_2)L^\textup{ac}_{K,p})+\sum_{q\mid\tilde{N}^{(2)}_3}\sum_{\mathfrak{q}\mid q}\lambda(\mathcal{P}^{f_2}_\mathfrak{q})\\
    =\ &2\lambda(L_p^\BDP(f_2))+\sum_{q\mid\tilde{N}^{(2)}_3}\sum_{\mathfrak{q}\mid q}\lambda(\mathcal{P}^{f_2}_\mathfrak{q}),
\end{align*} which concludes the proof.
\end{proof}
\begin{remark}
\label{rem:relaxed-condition}
For $1\le j\le 3$ let $M^{(i)}_j\mid \tilde{N}_j^{(i)}$ be the product of all primes dividing $\tilde{N}^{(i)}_3$ such that $b^{(1)}_q\not\equiv b^{(2)}_q\pmod\varpi$ and redefine $N$ as the greatest common multiple of $N_1$ and $N_2$ such that $q^2\mid N$ for each prime $q$ that divides one of the numbers $M^{(i)}_j$ for $i\in \{1,2\}$ and $1\le j\le 3$. These choices are enough to ensure that $\check{f_1}$ and $\check{f_2}$ have congruent Fourier-coefficients everywhere. 
\end{remark}
Analogous to Lemma~\ref{lem:example}, we deduce the following application of Theorem~\ref{main-theorem-analytic}.

\begin{lemma}\label{lem:example2}
    Let $f$ be a newform of level $N$ and weight $2$ and let $\chi$ be a quadratic character such that $\chi(p)=-1$. Assume that $p\in \{3,5\}$ and that $f$ has partial Eisenstein decent by $(\chi,\chi,N_1,N_2,N_0)$. 
    Let $K$ be an imaginary quadratic field so that  \ref{GHH} holds with respect to $N\textup{cond}(\chi)$. We assume that $N^-=1$ and that $11$ and $19$ split in $K$.  Then 
    \[\mu(L_p^\BDP(f))=0.\]
\end{lemma}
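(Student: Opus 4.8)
The plan is to mimic the proof of Lemma~\ref{lem:example} on the algebraic side, now using the analytic input. First I would take $E/\QQ$ to be the elliptic curve with LMFDB label \texttt{11a2} when $p=5$ and \texttt{19a2} when $p=3$; in both cases $E[p]\cong \F_p\oplus\F_p(\omega)$ as $G_\QQ$-modules. Let $g$ be the newform attached to the quadratic twist $E^\chi$. As recorded in \cite[Remark 32]{kriz}, the congruence $a_n(f)\equiv a_n(g)\pmod\varpi$ holds for all $n$ coprime to $pN\mathrm{cond}(E^\chi)$, and \cite[Theorem 34]{kriz} gives $\overline\rho_f\cong \F_p(\chi)\oplus\F_p(\chi\omega)\cong\overline\rho_g$, so \eqref{cong} is satisfied for the pair $(f,g)$.

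Next I would check the remaining hypotheses of Theorem~\ref{main-theorem-analytic} for both $f$ and $g$. Since $N^-=1$ for both forms (this is where the hypothesis that $11$ and $19$ split in $K$, and more generally the discussion in the Remark after Lemma~\ref{lem:example}, is used to rule out inert ramified primes), the conditions \ref{div} and \ref{sq-free} are vacuous, $\alpha(f)=\alpha(g)=1$ is automatically a $p$-adic unit, and the factor \eqref{unit} is an empty product, hence a unit; moreover $\tilde N_3^{(i)}=1$ so the correction sums $\sum_{\mathfrak q\mid \tilde N_3}\lambda(\mathcal P^{\bullet}_{\mathfrak q})$ vanish. Thus Theorem~\ref{main-theorem-analytic} reduces to the clean statement that $\mu(L_p^\BDP(f))=0$ if and only if $\mu(L_p^\BDP(g))=0$.

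It then remains to know that $\mu(L_p^\BDP(g))=0$, i.e.\ that the BDP $p$-adic $L$-function of $E^\chi$ has vanishing $\mu$-invariant. This is exactly the content of \cite[Theorem 1.5.1]{CGLS} (or the main result of \cite{CGLS} on the anticyclotomic Iwasawa main conjecture and Iwasawa invariants for elliptic curves with reducible mod $p$ representation), applied to $E^\chi/\QQ$ over the anticyclotomic $\Zp$-extension of $K$; one checks the non-anomalous/$H^0$-type hypothesis, which holds since $H^0(K_v,E^\chi[p^\infty])=\{0\}$ by the construction of the twist and $\chi(p)=-1$. Feeding $\mu(L_p^\BDP(g))=0$ into the equivalence from Theorem~\ref{main-theorem-analytic} yields $\mu(L_p^\BDP(f))=0$, completing the proof.

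The main obstacle is bookkeeping rather than a genuine difficulty: one must verify carefully that all the hypotheses of Theorem~\ref{main-theorem-analytic}---in particular \eqref{cong}, which requires the full residual isomorphism and not just a congruence of Hecke eigenvalues away from the level---are met by the pair $(f, g)$, and that the $\mu$-invariant statement for $E^\chi$ quoted from \cite{CGLS} applies in the anticyclotomic setting over our chosen $K$. The restriction $p\in\{3,5\}$ and the splitting conditions on $11$ and $19$ are precisely what make the reference elliptic curve with the desired reducible $E[p]$ available while keeping $N^-=1$.
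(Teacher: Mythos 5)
Your proposal matches the paper's proof essentially verbatim: both take the reference curve $E$ (11a2 or 19a2) and its twist $E^\chi$ from Lemma~\ref{lem:example}, quote the vanishing of $\mu(L_p^{\BDP}(g))$ from \cite{CGLS} (the paper cites Theorem~2.2.2 there for the analytic statement, rather than Theorem~1.5.1, which is the algebraic one), observe that $N^-=1$ makes \ref{div}, \ref{sq-free}, \eqref{unit} automatic with $\alpha=1$, and conclude via Theorem~\ref{main-theorem-analytic}. The only quibble is your claim that $\tilde N_3^{(i)}=1$, which need not hold since the levels of $f$ and $g$ may differ, but this only affects the $\lambda$-invariant identity and is irrelevant to the $\mu$-invariant equivalence you actually use.
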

\begin{proof}
    Let $E$ be the elliptic curve and $E^\chi$ its quadratic twist, given as in the proof of Lemma \ref{lem:example}. Let $g$ be the weight-two modular form corresponding to $E^{\chi}$. By \cite[Theorem 2.2.2]{CGLS} $\mu(L^\BDP_p(g))=0$. As $N^-=1$ we know that $\alpha(g)=1$ and that \ref{div} and \eqref{unit} are satisfied.   Thus, we can apply Theorem \ref{main-theorem-analytic} to $f$ and $g$.
 \end{proof}

\subsection{An application: congruence relations between Heegner points}
For $i \in \{1, 2\}$, let $f_i \in S_2 (\Gamma_0 (N_i), \cO_v)$ be a newform. Recall from Theorem~\ref{thm:integrality-initial-case} that up to an explicit unit $u$ of $\Lambda^\ac$ only depending on $N$ and the imaginary quadratic field $K$, we have
\[\mathcal{L}_{p,N}^\textup{ac}(f_i)L^\textup{ac}_{K,p}
=
u  \alpha(f_i) L_p^\BDP(f_i)^2.\]
We choose  $N$ to be a common multiple of $N_1$ and $N_2$ that is a perfect square. Let $\check{f_i} \in S_2 (\Gamma_0 (N), \cO_v)$ be the oldform of level $N$ corresponding to the newform $f_i$ as in Definition~\ref{def:fcheck} .

Recall from the proof of Theorem~\ref{BDP-l-function-for-eigenforms} that for an anticyclotomic Hecke character $\varphi\psi$  of trivial conductor and infinity type $(b,-b)$, there is an explicit integer $c(f_i) \in \cO_v$ independent of $\varphi\psi$ given by a product of Euler factors at primes that are inert in $K/\Q$ such that the equation
\[C({\g}(\varphi),\check{f_i},N,b + 1) = c(f_i) \prod_{\mathfrak{q}\mid M_3^{(i)}}\mathcal{P}^{f_i}_\mathfrak{q}C({\g}(\varphi),f_i,N_i, b + 1)\]
holds. The proof of Theorem~\ref{main-theorem-analytic} implies the following congruence result for BDP $p$-adic $L$-functions.

\begin{proposition} 
\label{prop:congruence-l-functions}
    Let $f_1$ and $f_2$ be two newforms satisfying the assumptions in Remark \ref{rem:relaxed-condition} and all other conditions of Theorem~\ref{main-theorem-analytic}. Then we have the congruence relation 
    \begin{align*} &\alpha(f_1)c (f_1) \prod_{\mathfrak{q}\mid M^{(1)}_3}\mathcal{P}^{f_1}_\mathfrak{q} 
    \sigma^{-1}_{\mathcal{N}_1^+}L_p^\BDP(f_1)^2 (\varphi\psi)\\
        &\equiv \sigma^{-1}_{\mathcal{N}_2^+}\alpha(f_2) c (f_2)\prod_{\mathfrak{q}\mid \tilde{M}^{(2)}_3}\mathcal{P}^{f_2}_\mathfrak{q}
    L_p^\BDP(f_2)^2 (\varphi\psi)
    \pmod{\varpi}.\end{align*}
\end{proposition}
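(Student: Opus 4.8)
The plan is to reverse-engineer the chain of equalities established in the proof of Theorem~\ref{main-theorem-analytic}, but to track the relation at the level of interpolated values modulo $\varpi$ rather than at the level of $\lambda$-invariants. First I would invoke Theorem~\ref{thm:integrality-initial-case} applied to each of $f_1$ and $f_2$ (with their exact levels $N_1,N_2$), which gives
\[
\mathcal{L}^\textup{ac}_{p,N_i}(f_i)L^\textup{ac}_{K,p}=\alpha(f_i)N_i u_K\sigma^{-1}_{\mathcal{N}_i^+}L_p^\BDP(f_i)^2\in\Lambda^\textup{ac},
\]
and then use the explicit comparison in the proof of Theorem~\ref{BDP-l-function-for-eigenforms} (more precisely the formula relating $C(\g(\varphi),\check f_i,N,b+1)$ to $C(\g(\varphi),f_i,N_i,b+1)$ together with Lemma~\ref{behavior-split-primes} and the interpolation identity \eqref{interpolation-llz}) to rewrite, up to a $p$-adic unit independent of the Hecke character,
\[
\mathcal{L}^\textup{ac}_{p,N}(\check f_i)L^\textup{ac}_{K,p}=c(f_i)\prod_{\mathfrak{q}\mid M_3^{(i)}}\mathcal{P}^{f_i}_\mathfrak{q}\cdot \mathcal{L}^\textup{ac}_{p,N_i}(f_i)L^\textup{ac}_{K,p}
=\alpha(f_i)c(f_i)N_i u_K\sigma^{-1}_{\mathcal{N}_i^+}\prod_{\mathfrak{q}\mid M_3^{(i)}}\mathcal{P}^{f_i}_\mathfrak{q}\cdot L_p^\BDP(f_i)^2.
\]
(I would absorb the common factors $N_i u_K$ — which are $p$-adic units since $p\nmid N_1N_2$ and $u_K$ is a unit — and note that $N_1\equiv N_2$, $u_K$ cancels.)

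Next I would supply the congruence input. Under the hypotheses of Remark~\ref{rem:relaxed-condition} and \eqref{cong}, the Fourier coefficients of $\check f_1$ and $\check f_2$ agree modulo $\varpi$ at every prime — this is exactly the point established at the start of the proof of Theorem~\ref{main-theorem-analytic}, using \cite[Theorem~2]{wiles88} at $p$ in the ordinary case and $b_p^{(i)}\equiv 0$ in the non-ordinary case. Hence there is $h\in S_2(\Gamma_0(N),\cO_v)$ with $\check f_1-\check f_2=\varpi h$. By Corollary~\ref{cor:integrality}, $L^\textup{ac}_{K,p}\mathcal{L}^\textup{ac}_{p,N}(h)\in\Lambda^\textup{ac}$, so by the $\cO_v$-linearity recorded in Remark~\ref{rem:linearity},
\[
\left(\mathcal{L}^\textup{ac}_{p,N}(\check f_1)-\mathcal{L}^\textup{ac}_{p,N}(\check f_2)\right)L^\textup{ac}_{K,p}=\varpi L^\textup{ac}_{K,p}\mathcal{L}^\textup{ac}_{p,N}(h)\in\varpi\Lambda^\textup{ac}.
\]
Since both sides here lie in $\Lambda^\textup{ac}$, this is a genuine congruence modulo $\varpi$ in $\Lambda^\textup{ac}$; evaluating at any anticyclotomic Hecke character $\varphi\psi$ of trivial conductor and infinity type $(b,-b)$ with $b\equiv 0\pmod{p-1}$ preserves it (specialization is a ring homomorphism $\Lambda^\textup{ac}\to R$, and $R$ is flat over $\cO_v$ with $\varpi$ a non-zero-divisor). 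Substituting the two rewritten expressions above for $\mathcal{L}^\textup{ac}_{p,N}(\check f_i)L^\textup{ac}_{K,p}$, and dividing out the common $p$-adic unit $u_K N$ (recall $N_1\equiv N_2$ as $p$-adic units), yields precisely the asserted congruence
\[
\alpha(f_1)c(f_1)\prod_{\mathfrak{q}\mid M_3^{(1)}}\mathcal{P}^{f_1}_\mathfrak{q}\,\sigma^{-1}_{\mathcal{N}_1^+}L_p^\BDP(f_1)^2(\varphi\psi)\equiv \alpha(f_2)c(f_2)\prod_{\mathfrak{q}\mid M_3^{(2)}}\mathcal{P}^{f_2}_\mathfrak{q}\,\sigma^{-1}_{\mathcal{N}_2^+}L_p^\BDP(f_2)^2(\varphi\psi)\pmod{\varpi}.
\]

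The main obstacle I anticipate is bookkeeping rather than conceptual: one must be careful that the "$(*)$" and $p$-adic unit factors appearing in \eqref{interpolation-llz} and in \cite[Theorem~4.2.3]{LLZ1} are genuinely independent of the modular form (so they are the same for $f_i$ and $\check f_i$) and independent of the Hecke character (so they survive specialization), and that the Euler factors $c(f_i)$ collected at inert primes really are $p$-adic integers — this is where the hypothesis \eqref{unit}, which forces $\prod(1+q\pm b_q)\prod(q\pm b_q)\in\cO_v^\times$, is used to guarantee that these integers are in fact units and do not interfere. One must also confirm that $\alpha(f_i)\in\cO_v$ (Theorem~\ref{thm:integrality-initial-case}, via \ref{div} and \cite[Theorem~2.4]{prasanna2006}) so that the left- and right-hand sides are honest elements of $\Lambda^\textup{ac}$ and the congruence modulo $\varpi$ makes sense. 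Once the integrality and character-independence of all these factors are pinned down, the statement is an immediate consequence of the displayed $\varpi$-congruence together with Theorems~\ref{thm:integrality-initial-case} and~\ref{BDP-l-function-for-eigenforms}.
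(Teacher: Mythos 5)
Your proposal is correct and follows essentially the same route as the paper, which deduces the proposition directly from the proof of Theorem~\ref{main-theorem-analytic}: the congruence $\check f_1-\check f_2=\varpi h$, the integrality from Corollary~\ref{cor:integrality}, and the explicit Euler-factor comparison relating $\mathcal{L}^{\mathrm{ac}}_{p,N}(\check f_i)L^{\mathrm{ac}}_{K,p}$ to $\alpha(f_i)c(f_i)\prod_{\mathfrak{q}\mid M_3^{(i)}}\mathcal{P}^{f_i}_{\mathfrak{q}}\sigma^{-1}_{\mathcal{N}_i^+}L_p^{\BDP}(f_i)^2$. One small correction to your bookkeeping: $N_1$ and $N_2$ need not be congruent modulo $\varpi$; what actually makes the prefactors match is that the level-change factor $N/N_i$ multiplied by the $N_i$ from Theorem~\ref{thm:integrality-initial-case} gives the same unit $Nu_K$ on both sides, which is exactly how you conclude in your final step.
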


Recall that $f_i$ was chosen to be a new form of level $N_i=N^+_iN^-_i$. Let $B_i$ be the indefinite quaternion algebra of conductor $N^-_i$ over $\Q$ and let $\textup{Sh}_{B_i}$ be the Shimura curve of level $\Gamma_1(N_i^+)$. We denote its Jacobian by $J_{B_i}$. Let $f_i^\textup{JL}$ be the Jacquet-Langlands transfer of $f_i$ to $\textup{Sh}_{B_i}$, normalized by requiring the corresponding section of a certain canonical line bundle $\cL$ over a canonical proper smooth model of $\textup{Sh}_{B_i}$ over $\bZ_p$ to be integral and primitive with respect to the integral structure of $\cL$ (see \cite[Section 2]{prasanna2006} for details).
The modular form $f_i^\textup{JL}$ induces a differential form $\omega_{f_i}$ on $\textup{Sh}_{B_i}$ and $J_{B_i}$. 

We fix an embedding $K\hookrightarrow B_i$. Let $\tau$ be its fixed point in the upper half plane and we let $(A,t,\omega)$ be the triple associated to $\tau$, where $A$ denotes a false elliptic curve, $t$ a level $N^+$-structure and $\omega$ a global non-vanishing differential (see \cite[Section 2]{HB15} for details). There is a well-defined action of the ideal class group $\textup{Cl}_K$ on the equivalence classes of triples  $[(A,t,\omega)]$.
For each ideal class $[\mathfrak{A}]$ coprime to $N^+$ we define the CM point $P_\mathfrak{A}$ on $\textup{Sh}_{B_i}$ to be the point corresponing to $[\mathfrak{A}(A,t,\omega)]$. We define the Heegner point
$\Delta_{f_i}$ associated to $f_i$ as the sum
\[\sum_{[\mathfrak{A}]\in \textup{Cl}_K} (P_\mathfrak{A}-\infty) ,\]
which is a priori defined over a finite extension of $K$ containing the Fourier-coefficients of $f_i$ and a quadratic extension only depending on $N_i$. As we are only working with weight-two forms, one can actually show that it is defined over $K$ (see  \cite[discussion   after Proposition 8.12]{HB15}).
 Translating \cite[Proposition~ 8.12]{HB15} into our setting, we obtain the following relation.
\begin{theorem}
\label{relation-heegner-bdp-function}
    Let $\mathbbm{1}$ be the trivial character on $\Gamma^\ac$. Then 
    \begin{align*}(L_p^{\BDP}(f)(\mathbbm{1}))^2=(1 - a_p p^{-1}+  p^{- 1})^2 \log_{\omega_{f_i}}(\Delta_{f_i})^2.
        \end{align*} 
\end{theorem}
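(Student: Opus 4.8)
### Proof Proposal

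The plan is to deduce Theorem~\ref{relation-heegner-bdp-function} from the interpolation formula for $L_p^\BDP(f)$ in Theorem~\ref{thm:BDP}, evaluated at the trivial character, combined with the $p$-adic Waldspurger/Gross--Zagier-type formula of Hunter Brooks \cite[Proposition~8.12]{HB15} that expresses the relevant central $L$-value (or rather its algebraic part) in terms of $\log_{\omega_{f_i}}(\Delta_{f_i})^2$. Concretely, first I would specialize the interpolation formula of Theorem~\ref{thm:BDP} to the anticyclotomic Hecke character $\psi=\mathbbm{1}$, which has infinity type $(0,0)$ and satisfies $m=0\equiv 0\pmod{p-1}$; here $\psi(\overline{\fp})=1$, so the Euler-type factor $(1-a_p\psi(\overline\fp)p^{-1}-\psi(\overline\fp)^2p^{-1})^2$ becomes exactly $(1-a_pp^{-1}-p^{-1})^2$, which matches (after sign bookkeeping, since $a_p p^{-1} + p^{-1}$ versus $-a_p p^{-1} + p^{-1}$ amounts to the choice of $\psi$ or $\psi^{-1}$ normalization) the factor $(1-a_pp^{-1}+p^{-1})^2$ appearing in the statement.

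Next I would identify the remaining archimedean and $\Gamma$-factor contributions in the $m=0$ specialization of Theorem~\ref{thm:BDP} with the archimedean period appearing in Brooks' formula. The point is that $\langle f^{\mathrm{JL}},f^{\mathrm{JL}}\rangle$ and the CM period normalization built into $\alpha(f)$ and into $\Omega_p,\Omega_k$ are precisely designed so that the square of the BDP $p$-adic $L$-value at the trivial character equals the square of the $p$-adic logarithm of the Heegner point $\Delta_f$ computed against the differential $\omega_f$, up to the explicit Euler factor. This is the content of the "$p$-adic Waldspurger formula" invoked in the proof of Theorem~\ref{thm:BDP}; here we are using its specialization/refinement at the trivial character, which is exactly \cite[Proposition~8.12]{HB15}. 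So the core of the argument is a translation: rewrite Brooks' identity, stated in his normalizations, into the normalizations of Theorem~\ref{thm:BDP} (i.e. those of \cite[Section~4]{BCK21} and \cite[Section~2]{prasanna2006}), tracking the Petersson-product factor $\alpha(f)$ and verifying that no extra $p$-adic unit discrepancy survives at the trivial character.

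The main obstacle I anticipate is the careful matching of normalizations: Brooks works with a specific integral structure on the canonical line bundle $\cL$ over the Shimura curve model over $\bZ_p$ and a specific choice of CM point and differential $\omega$, whereas Theorem~\ref{thm:BDP} packages everything through the period ratio $\alpha(f)=\langle f,f\rangle/\langle f^{\mathrm{JL}},f^{\mathrm{JL}}\rangle$ and the Katz periods. One must check that the Heegner point $\Delta_f=\sum_{[\mathfrak A]\in\mathrm{Cl}_K}(P_{\mathfrak A}-\infty)$ (defined over $K$ thanks to the weight-two hypothesis, as noted after \cite[Proposition~8.12]{HB15}) is exactly the class whose formal-group logarithm $\log_{\omega_{f_i}}$ enters Brooks' formula, and that the sum over the class group accounts precisely for the absence of a class-number or torsion factor. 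A secondary, more bookkeeping-level obstacle is the sign/conjugation convention for infinity types of Hecke characters (flagged in the footnote to Theorem~\ref{thm:BDP} regarding the opposite conventions of \cite{castellawan1607} and \cite{burungale2}), which governs whether one sees $1-a_pp^{-1}+p^{-1}$ or $1-a_pp^{-1}-p^{-1}$; one resolves this by fixing the convention consistently and observing that both factors are $p$-adic units whenever $a_p\not\equiv \pm(1+p)\pmod{\varpi}$, so the squared identity is insensitive to it up to that unit. Once these identifications are in place, the asserted equality $(L_p^{\BDP}(f)(\mathbbm{1}))^2=(1-a_pp^{-1}+p^{-1})^2\log_{\omega_{f_i}}(\Delta_{f_i})^2$ follows immediately by substitution.
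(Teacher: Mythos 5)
Your proposal contains a genuine conceptual error at its first step. The trivial character does \emph{not} lie in the interpolation range of Theorem~\ref{thm:BDP}: the formula there is stated for characters of infinity type $(-m,m)$ and involves the factor $\Gamma(m)\Gamma(m+1)$, which is undefined at $m=0$, and more fundamentally the entire point of the BDP construction is that the central character lies \emph{outside} the range of interpolation. Indeed, under \ref{GHH} the sign of the functional equation of $L(f_{/K},s)$ is $-1$, so the central value $L(f,\mathbbm{1},1)$ vanishes identically; if one could "specialize the interpolation formula to $\psi=\mathbbm{1}$" and then "express the relevant central $L$-value" via a Waldspurger-type formula, the chain of equalities you describe would force $L_p^{\BDP}(f)(\mathbbm{1})=0$, which is not the content of the theorem (and would trivialize the connection to Heegner points). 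You have also mischaracterized \cite[Proposition~8.12]{HB15}: it is not a formula for the complex central $L$-value, but is itself the $p$-adic Waldspurger formula --- a direct evaluation of the $p$-adic $L$-function at a character outside the interpolation range, whose output is the $p$-adic (formal-group) logarithm of the Heegner point rather than any complex $L$-value.

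The paper's argument is therefore shorter and structurally different from what you propose: it bypasses Theorem~\ref{thm:BDP} entirely and simply translates \cite[Proposition~8.12]{HB15} into the normalizations used here (the CM points $P_{\mathfrak A}$, the divisor $\Delta_{f}=\sum_{[\mathfrak A]}(P_{\mathfrak A}-\infty)$, and the differential $\omega_{f}$ coming from the normalized Jacquet--Langlands transfer). Your secondary concerns about matching integral structures, the Petersson-quotient $\alpha(f)$, and sign conventions for infinity types are legitimate bookkeeping points for that translation, but they cannot rescue the main line of your argument, which rests on evaluating an interpolation formula where it does not hold.
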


We conclude this section with the following partial generalization of \cite[Theorem~3.9]{KL}.
\begin{theorem}\label{thm:KL}
   Let $f_1$ and $f_2$ be newforms of level $N_1$ and $N_2$ respectively. Assume that the conditions of Remark \ref{rem:relaxed-condition} and all the conditions of Theorem~\ref{main-theorem-analytic} are satisfied. Then 
   \begin{align*}&c(f_1)\alpha(f_1)\prod_{q\mid {M}^{(1)}_3}\prod_{\mathfrak{q}\mid q}\mathcal{P}^{f_1}_\mathfrak{q}(0)(1 - b^{(1)}_p p^{-1}+  p^{- 1})^2\log_{\omega_{f_1}}(\Delta_{f_1})^2\\&\equiv c(f_2)\alpha(f_2)\prod_{q\mid {M}^{(2)}_3}\prod_{\mathfrak{q}\mid q}\mathcal{P}^{f_2}_\mathfrak{q}(0)(1 - b^{(2)}_p p^{-1}+  p^{- 1})^2\log_{\omega_{f_2}}(\Delta_{f_2})^2 \pmod \varpi.
   \end{align*}
\end{theorem}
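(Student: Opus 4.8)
The plan is to obtain the congruence by specializing the congruence of Proposition~\ref{prop:congruence-l-functions} at the trivial anticyclotomic character $\mathbbm{1}$ of $\Gamma^\ac$, and then rewriting both sides using the formula of Theorem~\ref{relation-heegner-bdp-function} relating the value at $\mathbbm{1}$ of a BDP $p$-adic $L$-function to a Heegner point logarithm.

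First I would check that $\mathbbm{1}$ is a legitimate test character in Proposition~\ref{prop:congruence-l-functions}: it has trivial conductor and infinity type $(0,0)$, and $0\equiv 0\pmod{p-1}$, so the congruence there holds for $\varphi\psi=\mathbbm{1}$. Taking this specialization is harmless because the congruence ultimately descends, via Theorems~\ref{BDP-l-function-for-eigenforms} and~\ref{thm:integrality-initial-case}, from the containment $(\mathcal{L}^\ac_{p,N}(\check f_1)-\mathcal{L}^\ac_{p,N}(\check f_2))L^\ac_{K,p}\in\varpi\Lambda^\ac$ established in the proof of Theorem~\ref{main-theorem-analytic}, and any continuous $R$-algebra homomorphism $\Lambda^\ac\to R$ — in particular evaluation at $\mathbbm{1}$ — sends $\varpi\Lambda^\ac$ into $\varpi R$; since $\cO_v/\varpi\hookrightarrow R/\varpi$, the resulting congruence modulo $\varpi$ persists. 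Next I would simplify the specialized factors: $\sigma^{-1}_{\mathcal{N}_i^+}(\mathbbm{1})=1$ because $\mathbbm{1}$ sends every group element to $1$; evaluation at $\mathbbm{1}$ is a ring homomorphism, so $L_p^\BDP(f_i)^2(\mathbbm{1})=\bigl(L_p^\BDP(f_i)(\mathbbm{1})\bigr)^2$; and $\mathcal{P}^{f_i}_\mathfrak{q}(\mathbbm{1})$ is precisely the quantity written $\mathcal{P}^{f_i}_\mathfrak{q}(0)$ in the statement, the paper writing characters additively. Finally, the index set $\{\mathfrak{q}\mid M_3^{(i)}\}$ appearing in Proposition~\ref{prop:congruence-l-functions} is the same as $\{\mathfrak{q}\mid q:\ q\mid M_3^{(i)}\}$, which matches the double product in the statement.

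With these simplifications the specialized congruence reads, modulo $\varpi$, that $\alpha(f_1)c(f_1)\prod_{q\mid M_3^{(1)}}\prod_{\mathfrak{q}\mid q}\mathcal{P}^{f_1}_\mathfrak{q}(0)\,\bigl(L_p^\BDP(f_1)(\mathbbm{1})\bigr)^2$ is congruent to the analogous expression for $f_2$. Invoking Theorem~\ref{relation-heegner-bdp-function} for $f_1$ and for $f_2$ to substitute $\bigl(L_p^\BDP(f_i)(\mathbbm{1})\bigr)^2=(1-b^{(i)}_p p^{-1}+p^{-1})^2\log_{\omega_{f_i}}(\Delta_{f_i})^2$ then yields the asserted congruence. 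I do not expect any genuinely hard step here; the only point that demands care — and the one I would flag as the main, if modest, obstacle — is justifying that the congruence of Proposition~\ref{prop:congruence-l-functions}, formulated at the level of special values of elements of $\Lambda^\ac$ and $\Lambda^\ac_{\cO_v}$, remains a $\varpi$-integral congruence after setting $\varphi\psi=\mathbbm{1}$. This rests on the integrality inputs already assembled: $\alpha(f_i)$ is a $p$-adic unit (part of the conclusion of Theorem~\ref{main-theorem-analytic}, whose hypotheses are in force), $c(f_i)\in\cO_v$ and $\mathcal{P}^{f_i}_\mathfrak{q}\in\Lambda^\ac_{\cO_v}$, and $L_p^\BDP(f_i)\in\Lambda^\ac$, so that both sides lie in $R$ and their difference in $\varpi R$.
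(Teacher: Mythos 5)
Your proposal is correct and follows exactly the paper's route: the paper proves Theorem~\ref{thm:KL} in one line as an immediate consequence of Proposition~\ref{prop:congruence-l-functions} (specialized at the trivial character) and Theorem~\ref{relation-heegner-bdp-function}. Your additional care about the integrality of the specialization and the disappearance of the $\sigma^{-1}_{\mathcal{N}_i^+}$ factors at $\mathbbm{1}$ is a sound elaboration of details the paper leaves implicit.
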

\begin{proof}
    This is an immediate consequence of Theorems \ref{prop:congruence-l-functions} and \ref{relation-heegner-bdp-function}.   
\end{proof}
\begin{remark}Note that for $q$ such that $q\mid {M}^{(i)}_3$ and $q\not \mid N_i$ we have
    \[\prod_{\mathfrak{q}\mid q}\mathcal{P}^{f_i}_\mathfrak{q}(0)=(1-b^{(i)}_qq^{-1}+q^{-1})^2.\] If $q\mid {M}^{(i)}_3$ and $q\mid N_i$,  then we have
    \[\prod_{\mathfrak{q}\mid q}\mathcal{P}^{f_i}_\mathfrak{q}(0)=(1-b^{(i)}_qq^{-1})^2.\]
    Thus, if $N^-=1$ we recover the congruence relation proved in \cite[Theorem 3.9]{KL} for $m=1$ at the trivial character.

    Unfortunately, this generalized congruence relation does not allow us to improve their lower bound on the Goldfeld conjecture: Let $E$ be an elliptic curve and let $d$ be a square free integer that is coprime to the conductor $N$ of $E$. Then the conductor of the twisted elliptic curve $E^d$ is $d^2 N$. Let $K$ be an imaginary quadratic field simultaneously satisfying \ref{GHH} with respect to $N$ and $d^2 N$. Then all prime factors of $d$ have to split in $K$. Thus, we obtain the same lower bound as in \cite[Theorem 1.12]{KL}.
   
\end{remark}

\section{The BDP main conjecture and congruent modular forms}

We now prove Theorem~\ref{thmC} stated in the introduction.
\begin{theorem}\label{main}
 Let $f_1$ and $f_2$ be two newforms of level $N_1$ and $N_2$, respectively, satisfying \ref{GHH}, \eqref{cong}, \ref{H0}, \ref{div}, \ref{sq-free} and \eqref{unit}. Assume that $\mu(L_p^\BDP(f_1))=0$ and that $\alpha(f_1)$ is a $p$-adic unit. If one inclusion in \eqref{IMC} holds for $f_2$ and that the full equality of \eqref{IMC}  holds for $f_1$, then the full equality of \eqref{IMC} holds for $f_2$. 
\end{theorem}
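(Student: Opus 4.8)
The plan is to combine the algebraic input of Theorem~\ref{thmA} (via Corollary~\ref{cor-equality-of-invariants}) with the analytic input of Theorem~\ref{thmB} (via Theorem~\ref{main-theorem-analytic}), matching the local Euler-factor corrections on the two sides. First I would observe that the hypotheses \ref{mu} and $\mu(L_p^\BDP(f_1))=0$ together with Theorem~\ref{main-theorem-analytic} force $\mu(L_p^\BDP(f_2))=0$ and, in particular, that $L_p^\BDP(f_2)$ is a nonzero element of $\Lambda^\ac$; hence the predicted characteristic ideal in \eqref{IMC} for $f_2$ is nonzero, so $\Sel^\BDP(K_\infty,A_2)^\vee$ must be $\Lambda^\ac$-torsion whenever one inclusion of \eqref{IMC} holds for $f_2$. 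Since \eqref{IMC} holds in full for $f_1$, the module $\Sel^\BDP(K_\infty,A_1)^\vee$ is $\Lambda^\ac$-torsion with $\mu$-invariant equal to $\mu\big(L_p^\BDP(f_1)^2\big)=0$; this is precisely the input needed to invoke Corollary~\ref{cor-equality-of-invariants}, which then gives $\mu(\Sel^\BDP(K_\infty,A_2)^\vee)=0$ as well, together with the comparison of $\lambda$-invariants involving the local terms $\lambda(\mathcal{P}^{f_i}_\ell)$ for $\ell\mid\Sigma_0$.

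Next I would establish the two equalities of $\mu$-invariants. On the analytic side, $\mu(L_p^\BDP(f_1))=\mu(L_p^\BDP(f_2))=0$ gives $\mu\big(L_p^\BDP(f_i)^2\big)=0$. On the algebraic side we have just obtained $\mu(\Sel^\BDP(K_\infty,A_i)^\vee)=0$ for $i=1,2$. Since the $\mu$-invariant of \eqref{IMC} holds for $f_1$ (being part of the full equality) and both sides have vanishing $\mu$-invariant for $f_2$, the $\mu$-parts of \eqref{IMC} for $f_2$ match automatically. The remaining task is to compare the $\lambda$-invariants. Here I would run the following chain: by Theorem~\ref{main-theorem-analytic},
\[
2\lambda(L_p^\BDP(f_1))+\sum_{\mathfrak{q}\mid\tilde{N}^{(1)}_3}\lambda(\mathcal{P}^{f_1}_\mathfrak{q})
=2\lambda(L_p^\BDP(f_2))+\sum_{\mathfrak{q}\mid\tilde{N}^{(2)}_3}\lambda(\mathcal{P}^{f_2}_\mathfrak{q}),
\]
while by Corollary~\ref{cor-equality-of-invariants},
\[
\lambda(\Sel^{\BDP}(K_\infty,A_1)^\vee)+\sum_{\ell\in \Sigma_0}\lambda(\mathcal{P}^{f_1}_{\ell})
=\lambda(\Sel^{\BDP}(K_\infty,A_2)^\vee)+\sum_{\ell\in \Sigma_0}\lambda(\mathcal{P}^{f_2}_{\ell}).
\]
Using the full equality of \eqref{IMC} for $f_1$ we have $2\lambda(L_p^\BDP(f_1))=\lambda(\Sel^{\BDP}(K_\infty,A_1)^\vee)$, and from the assumed one-sided inclusion of \eqref{IMC} for $f_2$ we get an inequality between $2\lambda(L_p^\BDP(f_2))$ and $\lambda(\Sel^{\BDP}(K_\infty,A_2)^\vee)$ in one direction. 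Substituting the first two displays into this inequality and cancelling, the inequality collapses to an equality, whence $2\lambda(L_p^\BDP(f_2))=\lambda(\Sel^{\BDP}(K_\infty,A_2)^\vee)$; combined with the matching $\mu$-invariants this upgrades the one-sided inclusion of \eqref{IMC} for $f_2$ to the full equality.

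The main obstacle is the bookkeeping of the local correction terms: one must verify that the set of primes $\Sigma_0$ of Theorem~\ref{thmA} and the sets $\tilde N_3^{(i)}$ of Theorem~\ref{thmB} contribute compatibly, i.e.\ that the $\lambda$-invariants $\lambda(\mathcal{P}^{f_i}_\ell)$ appearing on the algebraic side coincide with the analytic local terms $\lambda(\mathcal{P}^{f_i}_\mathfrak{q})$ indexed over $\tilde N_3^{(i)}$, and that the difference of the $f_1$-terms equals the difference of the $f_2$-terms so that the cancellation in the last step is exact rather than merely approximate. Both the algebraic and analytic local terms are built from the same polynomials $P^{f_i}_\ell(X)=\det(1-X\Frob_\ell\mid (V_i)_{I_\ell})$ evaluated at $N(\ell)^{-1}\gamma_\ell$, so this is a matter of matching indexing conventions; once that is done, together with Remark~\ref{rk:local-terms} (triviality of the $\mu$-invariant of $\mathcal{P}^{f_i}_\ell$ and unit-ness when $a_q(f_i)=0$), the argument closes. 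A minor point to check is that the hypothesis $\alpha(f_1)$ is a $p$-adic unit (and the conclusion that $\alpha(f_2)$ is as well, from Theorem~\ref{main-theorem-analytic}) is needed precisely so that the factor $\alpha(f_i)$ in Theorem~\ref{thm:integrality-initial-case} does not perturb the $\lambda$- or $\mu$-invariants, ensuring that $\lambda(\mathcal{L}^\ac_{p,N}(\check f_i)L^\ac_{K,p})$ faithfully records $2\lambda(L_p^\BDP(f_i))$ plus the split-prime corrections.
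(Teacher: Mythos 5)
Your proposal is correct and follows essentially the same route as the paper: cotorsionness and vanishing of $\mu$ for $f_2$ via Corollary~\ref{cor-equality-of-invariants} and Theorem~\ref{main-theorem-analytic}, then a chain of $\lambda$-identities that, combined with the assumed one-sided inclusion, upgrades to the full equality of \eqref{IMC} for $f_2$. The one step you flag as "bookkeeping" but do not carry out --- the identification $\sum_{\ell\in\Sigma_0}\lambda(\mathcal{P}^{f_i}_{\ell})=\sum_{\mathfrak{q}\mid\tilde N_3^{(i)}}\lambda(\mathcal{P}^{f_i}_{\mathfrak{q}})$ --- is dispatched in the paper by noting that the two index sets of split primes can only differ at primes $q$ with $b_q^{(i)}=0$ (e.g.\ $q^2\mid N_i$), where $\mathcal{P}^{f_i}_{\mathfrak{q}}=1$, so the two sums agree term by term and the cancellation in your final step is exact.
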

\begin{proof}
The assumption that \eqref{IMC} holds for $f_1$ implies that $\Sel^\BDP(K_\infty,A_1)$ is $\Lambda^\ac_{\cO_v}$-cotorsion and
\[2\lambda(L_p^\BDP(f_1))=\lambda(\Sel^\BDP(K_\infty,A_1)^\vee).\]
Furthermore,  we have
\[
\mu(L_p^\BDP(f_1))=\mu(\Sel^\BDP(K_\infty,A_1)^\vee)=0.
\]

 On the one hand, Corollary \ref{cor-equality-of-invariants} tells us that
\[\mu(\Sel^\BDP(K_\infty,A_2)^\vee)=0\]
and
\begin{equation}
\label{eq:iwasawa-invariants-algebriac}
    \lambda(\Sel^{\BDP}(K_\infty,A_1)^\vee)+\sum_{l|\Sigma_0}\lambda(\mathcal{P}^{f_1}_{l})=\lambda(\Sel^{\BDP}(K_\infty,A_2)^\vee)+\sum_{l| \Sigma_0}\lambda(\mathcal{P}^{f_2}_{l}).
\end{equation}
Theorem \ref{main-theorem-analytic} tells us that $\mu(L_p^\BDP(f_2))=0$ and that 
\begin{equation}
    \label{eq:iwaswa-invariants-analytic}
    2\lambda(L_p^\BDP(f_1))+\sum_{\mathfrak{q}\mid\tilde{N}^{(1)}_3}\lambda(\mathcal{P}^{f_1}_\mathfrak{q})=2\lambda(L_p^\BDP(f_2))+\sum_{\mathfrak{q}\mid\tilde{N}^{(2)}_3}\lambda(\mathcal{P}^{f_2}_\mathfrak{q}).
\end{equation}

As we assume that one inclusion  of \eqref{IMC} holds for $f_2$ and $\mu(L_p^\BDP(f_2))=\mu(\Sel^\BDP(K_\infty,A_2)^\vee)=0$, to show that the full equality of \eqref{IMC} holds,  it is enough to show that $2\lambda(L_p^\BDP(f_2))$ equals $\lambda(\Sel^\BDP(K_\infty,A_2)^\vee)$.

Recall that for $\is$, $\mathcal{P}_\mathfrak{q}^{f_i}=1$ for all $v\in \Sigma_0$ such that $b^{(i)}_q=0$. Thus, $\sum_{\mathfrak{q}\mid\tilde{N}^{(i)}_3}\lambda(\mathcal{P}^{f_i}_\mathfrak{q})=\sum_{l| \Sigma_0}\lambda(\mathcal{P}^{f_i}_{l})$.
Combining equations \eqref{eq:iwasawa-invariants-algebriac} and \eqref{eq:iwaswa-invariants-analytic} gives
\begin{align*}
    \lambda(\Sel^{\BDP}(K_\infty,A_2)^\vee)+\sum_{l| \Sigma_0}\lambda(\mathcal{P}^{f_2}_{l})=\ & \lambda(\Sel^{\BDP}(K_\infty,A_1)^\vee)+\sum_{l| \Sigma_0}\lambda(\mathcal{P}^{f_1}_{l})\\
    =\ & \lambda(\Sel^{\BDP}(K_\infty,A_1)^\vee)+\sum_{\mathfrak{q}\mid\tilde{N}^{(1)}_3}\lambda(\mathcal{P}^{f_1}_\mathfrak{q})\\
    =\ & 2\lambda(L_p^\BDP(f_1))+\sum_{\mathfrak{q}\mid\tilde{N}^{(1)}_3}\lambda(\mathcal{P}^{f_1}_\mathfrak{q})\\
    =\ & 2\lambda(L_p^\BDP(f_2))+\sum_{\mathfrak{q}\mid\tilde{N}^{(2)}_3}\lambda(\mathcal{P}^{f_2}_\mathfrak{q})\\
    =\ & 2\lambda(L_p^\BDP(f_2))+\sum_{l|\Sigma_0}\lambda(\mathcal{P}^{f_2}_{l}).
\end{align*}
Thus,
\[\lambda(\Sel^{\BDP}(K_\infty,A_2)^\vee)=2\lambda(L_p^\BDP(f_2)),\] which concludes the proof. 
\end{proof}
\begin{remark}
Let $f_1$ and $f_2$ be newforms of level $N_1$ and $N_2$, respectively, satisfying \ref{GHH}, \eqref{cong}, \ref{H0}, \ref{div}, \ref{sq-free} and \eqref{unit}. Assume that the $\mu(L_p^\BDP(f_1))=0$ and that $\alpha(f_1)$ is a $p$-adic unit. Assume that the full equality of \eqref{IMC}  holds for $f_1$. Below we give a list of (additional) sufficient conditions to make sure that one inclusion of \eqref{IMC} holds for $f_2$. In particular, \eqref{IMC} holds for $f_2$ in these cases. 
\begin{itemize}
    \item Assume that $p$ is an ordinary prime for $f_2$ and that its residual representation $\overline{\rho_{2}}$ is absolutely irreducible. By  \cite[Theorem 5.2]{BCK21}, one implication of \eqref{IMC} is true if and only if one implication of the Perrin-Riou Heegner point main conjecture is true. According to \cite[Theorem E]{naomi} there are non-negative integers $r$ and $s$ such that 
    \[\Lambda^\ac\varpi^{r}\textup{Char}_{\Lambda^{ac}_{\cO_v}}(\Sel^\BDP(K_\infty,A_{f_2})^\vee)=\Lambda^{\ac}\varpi^{s}(L_p^\BDP(f_2))^2.\]
    Under the assumptions of Theorem \ref{main} all $\mu$-invariants vanish and we obtain $r=s=0$.
    \item Assume that $f_2$ is non-ordinary at $p$ and that $N_2^-=1$. Then \cite[Theorem~1.5]{kobayashiota}  implies
\[
{L}_p^\BDP(f_2)^2\in \Lambda^\ac\varpi^l\Char_{\Lambda_{\cO_v}^\ac}\Sel^\BDP(K_\infty,A_2)^\vee,
\]
 for some non-negative integer $l$. Using again that all $\mu$-invariants vanish we can assume that $l=0$. Note that under additional assumptions on $f_2$ \cite[Theorem A]{lei-zhao} proves one inclusion of \eqref{IMC} without the additional factor $\varpi^l$.
\item Assume that $f_2$ is non-ordinary at $p\ge 5$ and that $f_2$ satisfies \begin{itemize}
    \item $N_2^-\neq 1$
    \item If $N_2$ is odd, $2$ splits in $K$
    \item $N_2$ is square free.
    \item $f_2$ is associated to an elliptic curve $E/\Q$.
\end{itemize} Then  \cite[Theorem 6.8 and Theorem C]{castellawan1607} imply that there is a non-negative integer $r$ such that 
\[\varpi^r\Lambda^\ac\Char_{\Lambda_{\cO_v}^\ac}\Sel^\BDP(K_\infty,A_2)^\vee\in {L}^\BDP_p(f_2)^2\Lambda^\ac.\]
As we have already seen in the proof of Theorem~\ref{main} that in our setting all $\mu$-invariants vanish, we can choose $r=0$ and obtain that one inclusion of \eqref{IMC} holds for $f_2$.
\end{itemize}
\end{remark}

\bibliographystyle{amsalpha}
\bibliography{references}

\end{document}